%
%
%
%
\documentclass[]{interact}
\usepackage{bbm}
\usepackage{xcolor}

\usepackage{graphicx}

\allowdisplaybreaks
\newtheorem{theorem}{Theorem}[section]
\newtheorem{lemma}[theorem]{Lemma}
\newtheorem{corollary}[theorem]{Corollary}
\newtheorem{proposition}[theorem]{Proposition}

\theoremstyle{definition}
\newtheorem{definition}[theorem]{Definition}
\newtheorem{example}[theorem]{Example}

\newtheorem{remark}[theorem]{Remark}
\numberwithin{equation}{section}



\newcommand{\R}{\mathbb R}
\newcommand{\Z}{\mathbb Z}
\newcommand{\E}{\mathbf {E}}
\newcommand{\Var}{\mathbf{Var}}
\newcommand{\pr}{\mathbb {P}}
\newcommand{\N}{\mathbb {N}}
\newcommand{\Cov}{\mathbf{Cov}}
\newcommand{\Corr}{\mathbf{Corr}}

\newcommand{\vf}{\varphi}
\newcommand{\rank}{\mathop{\mbox{\rm rank}}}

\begin{document}

\title{Limit theorems for excursion sets of subordinated Gaussian random fields with long-range dependence}
\author{
\name{Vitalii Makogin\textsuperscript{a}\thanks{CONTACT V. Makogin. Email: vitalii.makogin@uni-ulm.de, Evgeny Spodarev. Email: evgeny.spodarev@uni-ulm.de} and Evgeny Spodarev\textsuperscript{a}}
\affil{\textsuperscript{a}Institut f\"{u}r Stochastik, Universit\"{a}t Ulm,  D-89069 Ulm, Germany. }
}

\maketitle
\begin{amscode}60G60; 60H05; 60D05; 60G10\end{amscode}

\begin{keywords}
Gaussian random field, Hermite polynomial, Wiener-It\^{o} integral, non-Gaussian limit, non-stationary covariance function, long memory.
\end{keywords}

\begin{abstract}
This paper considers the asymptotic behaviour of volumes of excursion sets of subordinated Gaussian random fields with (possibly) infinite variance. Actually, we consider  integral functionals of such fields and obtain their limiting distribution using the Hermite expansion of the integrand. We consider the general non-stationary Gaussian random fields, including stationary and anisotropic special cases. The limiting random variables in our limit theorems have the form of multiple Wiener-It\^{o} integrals. We illustrate most results with corresponding examples.
\end{abstract}

\section{Introduction}

For a real-valued measurable random field $\{X(t),t\in \R^d\},$ the volume of excursion set $A_u(X,W)=\{t\in W: X(t)\geq u\}$ in observation window {$W\subset  \R^d$} is given by  $$\nu_d(A_u(X,W))=\int_{W} \mathbbm{1}\{X(t)\geq u\}\nu_d(dt).$$ 
Here and further in this paper, {$u$ is a fixed constant, $W$ is a compact subset of $\R^d$},  $\nu_d(\cdot)$ denotes the Lebesgue measure in $\R^d.$ 
Volumes and other geometric characteristics of excursions of random fields are widely used for data analysis purposes in physics and cosmology (see e.g. \cite{Marinucci}),  medicine  \cite{Adler, Taylor}, materials science  \cite{Roubin,Torquato}. 

The volumes of excursion sets $\{A_u(X,W_n),n\geq 1\}$ in observation windows $W_n,$ $n\geq 1,$ form a sequence of random variables. We expect the existence of the limit in distribution
\begin{equation}
\label{lim1} 
\lim_{n\to \infty}\frac{\nu_d(A_u(X,W_n)) - a_n}{b_n}
\end{equation}
for some number sequences $a_n$, $b_n>0, n\in \N,$ as  observation windows $W_n$ grow in van Hove sense (see, e.g., \cite[Chapter 3]{BulSh}), i.e., $\nu_d(W_n)\to \infty, n\to \infty$ and
$\lim_{n\to \infty}\nu^{-1}_d(W_n)\nu_d(\partial W_n \oplus B_r(0))=0,$ $r>0,$ {where $\partial W_n$ denotes the boundary of $W_n,$ the Minkowski sum of two sets $A, B \subset \R^d$  is  $A \oplus B = \{x + y: x \in A,y \in B\},$ and $B_r(0)=\{x \in \R^d, \|x\|<r\}.$} 

During past decades, a significant contribution was made to find the limiting distribution in \eqref{lim1} for isotropic and/or stationary random fields, see \cite{Leonenko88} and the books \cite{Leonenko86, Leonenko99}.  The result of Bulinski et. al. \cite{Bulinski2012} states that limiting distribution in \eqref{lim1} is Gaussian if $X$ is a Gaussian centered stationary random field with continuous covariance function $C(t)=\E[X(0)X(t)]$ such that $|C(t)|=O(\|t\|^{-\alpha})$ for some $\alpha>d$ as $\|t\|\to \infty.$ 
In case $0<\alpha<d,$ the field $X$ is long-range dependent and such result can not be used. 
\begin{definition}
A square-integrable stationary centered random field $\{X(t),t\in \R^d\}$ with covariance function $C(t)=\E[X(0)X(t)],t\in \R^d$ is called \textit{long-range} dependent (or with long memory) if \begin{equation}
\label{sigm3}
\int_{\R^d} |C(t)|\, dt = + \infty
\end{equation}
and \textit{weakly} dependent if this integral is finite.
\end{definition}


Real data bring evidence of long memory property in many fields of modern science. For example,  the long memory properties of the final energy demand in Portugal are detected in \cite{belbute}.  An overview of the  state of the art in the theoretical findings for long range dependent stochastic processes can be found, for instance, in \cite{beran} and \cite{sam2016}.

The random fields used in cosmology (potential, temperature, velocity, density of matter, etc.) are mostly Gaussian or derived from Gaussian random fields as their local transformation (Rayleigh, Maxwell, Lognormal and Rectangular processes), see e.g. \cite{coles, coles1991}. 
Frequently used transforms are $f(x)=x+\beta x^3, x\in \R, \beta>0$ (cubic model) and
$f(x)=x+\alpha (x^2-1), x\in \R, \alpha>0$ (quadratic model), cf. \cite{shandarin}. 

For example, in \cite{verde} authors consider a model in which the gravitational potential $\Phi$ is a linear combination of a Gaussian random field $\phi$ and the square of the same random field, $\Phi=\phi+\alpha_\Phi(\phi^2-\E\phi^2),$ where $\alpha_\Phi>0.$ Lognormal random field models with $X(t)=\exp(Y(t)),$ where $Y$ is a Gaussian random field,  are of interest in radar and image processing, see  e.g. \cite{frankot}.  For further physical literature on Gaussian subordinated fields we refer to \cite{bartolo2,bartolo1}.

The theory of random fields with {long-range dependence} is not so developed as for stochastic processes. The first studies on this topic can be found in \cite{Leonenko86,Kelbert,Leonenko99,Leonenko13}).  They prove  limit theorems for functionals of the form
\begin{equation}
\label{LeonLimit}
Z_n=\int_{W_n}G(X(s))ds \mbox{ as }n\to+\infty,
\end{equation}
where $W_n$ are some growing sets, {$G:\R\to\R$ is a measurable function}, and $X$ is an isotropic stationary Gaussian random field with covariance function $C(s,t),$ that depends only on the distance $\|s-t\|.$ This means that probability law of random field $X$ is invariant with respect to rigid motions. {We consider further the case of $G=\mathbbm{1}\{f(\cdot)\geq u\}.$}

Due to results in \cite{Leonenko86}, if the Hermit rank of function $G$ is greater {or equal} $2$ then the limiting distribution in \eqref{LeonLimit} is non-Gaussian. Much earlier, the non-Gaussian limit was found in \cite{Rosenblatt} due to non-summable correlations and non-linearity of function $G$.
This paper led to further developments in 70s and 80s (see e.g. \cite{major,Taqqu}).

Consider the Hilbert space $L^2(\R,\vf),$ with weight $\varphi(x)=\frac{1}{\sqrt{2\pi}}e^{-x^2/2},x\in \R.$  Hermite polynomials $\{H_k\}_{k\geq 0},$ given by 
$$H_k(x)=(-1)^k e^{x^2/2}\frac{d^k}{d x^k}e^{-x^2/2},k\geq 0,
$$
form a complete orthogonal system in $L^2(\R,\vf)$ (see e.g. \cite[Chapter~22]{abram},  \cite[Chapter~2]{doman}, \cite{rozanov}), that is,
$\langle H_k,H_l\rangle_{\vf}:=\int_{\R}H_k(x)H_l(x)\vf(x)dx=\delta_{kl}k!,k \geq 0.
$
The first few polynomials are 
$H_0(x)=1,\, H_1(x)=x, \, H_2(x)=x^2-1, \ldots \, x\in \R.$

\begin{definition}
For a function $G\in L^2(\R,\vf)$ its Hermite rank is 
$$\rank G=\min\{k\in \N|\langle G,H_k\rangle_{\vf}\neq 0\}.$$
\end{definition}

In the last few years, the active research of anisotropic linear random fields with long-range dependence started with papers \cite{Lavancier2006,Lavancier2007} and \cite{Kaj}.  The papers \cite{Pilip17,surg,Surg19-2,Surg19-1} introduced the notions of scaling transition and distributional long-range dependence for stationary linear random fields on $\Z^d.$ 

{
Another prominent modern technique for  proving limit theorems is based on the Malliavin-Stein approach. Its advantage is a possibility to obtain convergence rates if the limiting distribution is Gaussian and there exist moments of order at least $4+\delta$ ($\delta>0$) of our random field.
Mostly, the central limiting behaviour is proved under weak dependence, see, for example, \cite{pham} for the case of sojourn times of Gaussian fields and \cite{muller} for the case of  Lipschitz–Killing curvatures of Gaussian excursions. To our knowledge, the long- range dependence is covered by several special cases, see e.g. \cite{nurdin}.}

In this paper we extend the above lines of research to  non-stationary random fields. They arise naturally either as weighted/transformed stationary fields or as a result of filtering,  {see the study in  \cite{Alodat20}, \cite{Anh13}, and \cite{Bai} on the limiting behaviour in \eqref{LeonLimit} for filtered random fields.}   Particularly, non-stationary filtered random fields are used in astronomy, when spatial structure is studied by using both the  time and wavelength dimensions and the method of Doppler tomography, see \cite{Vaughan}. For example, a filtered random field model is used for the line emission by  $y(t)=\int_\R \psi(t-\tau)x(\tau)d\tau, t\in \R$ where $x$ is the driving continuum and $\psi$ is the response function.  In theory of fluid flows, the filtered velocity field is given by
$U'(x,t)=\int_{\R^3} G(x,y)U(y,t)\nu_3(dy), x\in \R^3, t>0$ where $U$ is a velocity field and $G$ is a filter. Filters of the form $G(x,y)=\tilde{G}(x-y)$ are called homogeneous. The commonly used in large eddy simulations are Gaussian, Tophat and Sharp Fourier cutoff filters. The inhomogeneous filters, which produce the non-stationary random fields,  reflect local changes in the flow scale. The coordinate-wise product filters of the form $G(x,y)=G_1(x_1,y_1)G_2(x_2,y_2)G_3(x_3,y_3),x=(x_1,x_2,x_3)\in \R^3,y=(y_1,y_2,y_3)\in \R^3$ are also commonly used. Some of $G_1,G_2, G_3$ can be homogeneous. For example, in a channel flow, the stream-wise and span-wise directions are homogeneous and the wall-normal direction is not. For further details, we refer to \cite{bernard}. 

We start the paper with the central limit case in \eqref{lim1} for very general non-stationary random fields. 
More precisely, for a subordinated Gaussian random field $\{X(t)=f(Y(t)),t\in \R^d\},$ where $f:\R\to\R$ is a transformation function and $Y$ is a centered Gaussian random field with $\rho(t,s)=\Corr (Y(t),Y(s)),t,s,\in \R^d,$ we obtain the convergence to $N(0,1)$ in \eqref{lim1} if $\rank \mathbbm{1}\{f(\cdot)\geq u\} =1$ and 
\begin{equation}
\label{eqi1}
\lim_{n\to\infty}\frac{\int_{W_n}\int_{W_n}\rho^2(t,s)\nu_d(dt) \nu_d(ds)}{\int_{W_n}\int_{W_n}\rho(t,s)\nu_d(dt) \nu_d(ds)}= 0.
\end{equation} 

{For the case of  $\rank \mathbbm{1}\{f(\cdot)\geq u\} =1$,} we simplify condition \eqref{eqi1} for non-isotropic stationary covariance functions. We show that it is true only for long-range dependent random fields. We also pay special attention to the spatio-temporal case and show that if {long-range dependence property} is carried by time variable only, then it can be enough to meet condition \eqref{eqi1}.

We also prove limit theorems for the cases $\rank \mathbbm{1}\{f(\cdot)\geq u\} \geq 2.$
To do so, we extend the problem to the limiting behaviour of general integral functionals of Gaussian random fields. The main technique here is the spectral theory and spectral representation of (non-)stationary random fields. The conditions ensuring our limit theorems are formulated via the asymptotic behaviour of spectral densities. 

If the Hermite rank of transformation function $f$ is greater {or equal} 2,  the limiting random variables in \eqref{lim1} have the form of multiple Wiener-It\^{o} integrals (first introduced by It\^{o} in \cite{ito}). {See, for example, \cite{nualart} for their construction and properties.} 


{
To summarise, our paper contributes to the theory of limit theorems of random fields as follows:
\begin{itemize}
    \item The paper proves a Gaussian limit for volumes of excursion sets of long-range dependent random fields with the normalization different from CLT (given in \cite{Bulinski2012} under the short-range dependence).
    \item The limit theorems for integral functionals under long-range dependence  based on the Hermite expansion technique (e.g. \cite{leon14}) are extended to the non-stationary Gaussian random fields. Moreover, integration domains now grow in the van Hove sense, which allows for more flexibility in their geometry.  
    \item The subordination allows to get the limit theorems for the class of random fields with infinite variance.  
    \item Numerous examples show that our conditions are relatively easy to check.
\end{itemize}
}

The paper is organized as follows. In Section 2, we consider the central limit theorem in \eqref{lim1} for both non-stationary (Section 2.1) and stationary Gaussian random fields (Section 2.2). Spatio-temporal random fields are covered by Section 2.3. In Section 3, we present the results on non-Gaussian limiting behaviour of integral functionals of non-stationary (Section 3.1) and stationary (Section 3.2) Gaussian random fields.  To illustrate our results, we provide examples of covariance functions and spectral densities matching our theory. Moreover, in Section 4, we consider excursion sets of random fields with random volatility (Section 4.1) and fractional Gaussian noise (Section 4.2) in more detail.


\section{Central limit theorems}
\subsection{Non-stationary random fields}
In this section, we prove the central limit theorem for excursion sets of subordinated Gaussian random fields with {long-range dependence.}
Moreover, we apply it further in Section 4 to the case fractional Brownian motion, fractional Gaussian noise, and random fields with random volatility.

We formulate the following theorem for non-stationary random fields with non-constant variance. This is motivated by seasonal models, in particular econometric time series, e.g \cite{Hannan63, Hannan2001}.  

\begin{theorem}
\label{mainthm}
Let $\{Y(t),t\in \R^d\}$ be a real valued measurable centered Gaussian random field with  correlation function $\Corr(Y(t),Y(s))=\rho(t,s),$ $t,s\in \R^d.$ Let $\{X(t)=f(Y(t)),t\in \R^d\}$ be the corresponding subordinated field, where $f:\R\to\R$ is a Borel-measurable function. Let $(W_n)_{n\in \N}$ be a van Hove sequence of observation windows. For $u\in \R,$ let there exit a subset $U\subset W_n, n\geq 1$ such that $\nu_d(U)>0$ and
\begin{equation}
\label{eq111}
a_1(t)=\langle H_1, \mathbbm{1}\{f(\sigma(t)\cdot)\geq u\}\rangle_{\vf}\neq 0,\,t\in U,
\end{equation}
where $\sigma^2(t)=\E Y(t)^2,t\in \R^d.$  If
\begin{equation}
\label{eq11}
\lim_{n\to\infty}\frac{\int_{W_n}\int_{W_n}\rho^2(t,s)\nu_d(dt) \nu_d(ds)}{\int_{W_n}\int_{W_n}a_1(t)a_1(s)\rho(t,s)\nu_d(dt) \nu_d(ds)}= 0,
\end{equation} 
 then 
\begin{equation}
\label{eq12}
\frac{\int_{W_n}\mathbbm{1}\{X(t)\geq u\}\nu_d(dt)-\int_{W_n}\pr(X(t)\geq u)\nu_d(dt)}{  \left(\int_{W_n}\int_{W_n}a_1(t)a_1(s)\rho(t,s)\nu_d(dt) \nu_d(ds)\right)^{1/2}}\overset{d}{\longrightarrow} N(0,1)
\end{equation}
as $n\to\infty.$
\end{theorem}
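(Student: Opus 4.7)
The plan is to expand the indicator $\mathbbm{1}\{X(t)\geq u\}$ in Hermite polynomials of the standardized Gaussian field $\tilde Y(t)=Y(t)/\sigma(t)$, isolate the linear (Hermite-rank-one) summand, and show that it alone produces the Gaussian limit while the higher-order terms form a negligible $L^2(\Omega)$ remainder under condition~\eqref{eq11}. Set $g_t(y)=\mathbbm{1}\{f(\sigma(t)y)\geq u\}$, so that $\mathbbm{1}\{X(t)\geq u\}=g_t(\tilde Y(t))$ and $g_t\in L^2(\R,\vf)$ since $|g_t|\leq 1$; the expansion
$$g_t(y)=\sum_{k\geq 0}\frac{a_k(t)}{k!}H_k(y),\qquad a_k(t)=\langle g_t,H_k\rangle_{\vf},$$
converges in $L^2(\R,\vf)$, with $a_0(t)=\pr(X(t)\geq u)$ and $a_1(t)$ as in~\eqref{eq111}. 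Integrating over $W_n$ and exchanging sum and integral in $L^2(\Omega)$ (justified by Fubini plus the uniform-in-$t$ Parseval bound $\sum_k a_k^2(t)/k!=\|g_t\|_{\vf}^2\leq 1$), I would obtain
$$I_n:=\int_{W_n}\!\bigl[\mathbbm{1}\{X(t)\geq u\}-\pr(X(t)\geq u)\bigr]\nu_d(dt)=I_n^{(1)}+R_n,$$
with $I_n^{(1)}:=\int_{W_n}a_1(t)\tilde Y(t)\,\nu_d(dt)$ and $R_n:=\sum_{k\geq 2}\frac{1}{k!}\int_{W_n}a_k(t)H_k(\tilde Y(t))\,\nu_d(dt)$.

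The first term $I_n^{(1)}$ is already exactly Gaussian: as an $L^2(\Omega)$-limit of Riemann sums of the centered Gaussian process $t\mapsto a_1(t)\tilde Y(t)$, it is centered Gaussian with variance
$$B_n:=\int_{W_n}\!\int_{W_n}a_1(t)a_1(s)\rho(t,s)\,\nu_d(dt)\nu_d(ds),$$
so $I_n^{(1)}/\sqrt{B_n}\sim N(0,1)$ exactly, for each $n$ with $B_n>0$. To bound the remainder, I would invoke the diagram formula $\E[H_k(\tilde Y(t))H_\ell(\tilde Y(s))]=\delta_{k\ell}k!\,\rho^k(t,s)$, yielding
$$\E R_n^2=\int_{W_n}\!\int_{W_n}\sum_{k\geq 2}\frac{a_k(t)a_k(s)}{k!}\rho^k(t,s)\,\nu_d(dt)\nu_d(ds).$$
Combining $|\rho(t,s)|^k\leq\rho^2(t,s)$ for $k\geq 2$ with Cauchy-Schwarz in $k$ and the Parseval estimate above gives $\bigl|\sum_{k\geq 2}a_k(t)a_k(s)/k!\bigr|\leq 1$, hence $\E R_n^2\leq\int_{W_n}\!\int_{W_n}\rho^2(t,s)\,\nu_d(dt)\nu_d(ds)$.

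Applying hypothesis~\eqref{eq11} then gives $\E R_n^2/B_n\to 0$, so $R_n/\sqrt{B_n}\to 0$ in $L^2(\Omega)$; Slutsky's theorem combined with the exact Gaussianity of $I_n^{(1)}/\sqrt{B_n}$ yields~\eqref{eq12}. The main technical points I expect to have to handle carefully are the $L^2(\Omega)$-Fubini exchange of sum and integral in the Hermite expansion over a $d$-dimensional domain, the measurability in $t$ of the coefficients $a_k(t)$, and the implicit non-degeneracy $B_n>0$ for large $n$ that makes the normalization in~\eqref{eq12} meaningful; here condition~\eqref{eq111} on the set $U$ of positive Lebesgue measure together with the van Hove growth of $W_n$ is what guarantees that the linear term is non-trivial. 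The rest of the argument rests on two classical ingredients only: the exact Gaussianity of linear functionals of a Gaussian field, and the pointwise estimate $|\rho|^k\leq\rho^2$ for $k\geq 2$.
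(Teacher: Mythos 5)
Your proposal is correct and follows essentially the same route as the paper: Hermite expansion of $\mathbbm{1}\{f(\sigma(t)\cdot)\geq u\}$ in the standardized field $\tilde Y(t)$, exact Gaussianity of the first-order term with variance $B_n$, the orthogonality relation $\E[H_k(\tilde Y(t))H_\ell(\tilde Y(s))]=\delta_{k\ell}k!\,\rho^k(t,s)$ combined with $|\rho|^k\leq\rho^2$ for $k\geq 2$ and Parseval to bound the remainder, and condition \eqref{eq11} to kill it after normalization. The only differences are cosmetic (a $\sqrt{k!}$ versus $k!$ normalization of the coefficients), and the technical caveats you flag (Fubini exchange, measurability of $a_k(t)$, non-degeneracy of $B_n$) are handled, or glossed over, in the paper in exactly the same way.
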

\begin{proof} 
Consider the function $F_u(x,t):=\mathbbm{1}\{f(\sigma(t)x)\geq u\},\, x\in\R.$ It is clear that $F_u(\cdot,t)\in L^2(\R,\vf).$
So, the function $F_u$ can be represented as 
\begin{equation}
\label{eq13}
F_u(x,t)=\sum_{k=0}^{\infty}a_k(t)\frac{H_k(x)}{\sqrt{k!}},~x\in \R, \text{ where } 
a_k(t)=\frac{\langle F_u(\cdot,t),H_k\rangle_{\vf}}{\sqrt{k!}},~k\in \N_0.
\end{equation}
In particular, 
$a_0(t)=\int_{\R}\mathbbm{1}\{f(\sigma(t)x)\geq u\}\vf(x)dx=\pr(X(t)\geq u)$ {and
$a_1(t)
=\int_{\R}\mathbbm{1}\{f(\sigma(t)x)\geq u\}x\vf(x)dx=\E[ Y(t)\mathbbm{1}\{f(Y(t))\geq u\}]/{\sigma(t)}.$}
Let $\tilde{Y}(t)=Y(t)/\sigma(t),$ $t\in \R^d.$ {Note that $\tilde{Y}(t)\sim N(0,1)$ and  
$\int_{W_n}\sum_{k=0}^{\infty}a^2_k(t) \|H_k(\tilde{Y}(t))\|^2_{L^2(\R,\vf)}/k!\nu_d(dt)= \int_{W_n}\|F_u(\tilde{Y}(t),t)\|^2_{L^2(\R,\vf)}\nu_d(dt) \leq \nu_d(W_n).$ Then by Fubini's theorem we have the expansion in $L^2(\R,\vf)$}
\begin{align}
&\int_{W_n}\mathbbm{1}\{X(t)\geq u\}\nu_d(dt)=\nonumber\int_{W_n}F_u(\tilde{Y}(t),t)\nu_d(dt)=\int_{W_n} \sum_{k=0}^{\infty}a_k(t) \frac{H_k(\tilde{Y}(t))}{\sqrt{k!}}\nu_d(dt)\\
&\nonumber =\sum_{n=0}^{\infty}\int_{W_n}a_k(t) \frac{H_k(\tilde{Y}(t))}{\sqrt{k!}}\nu_d(dt)=\int_{W_n}a_0(t)H_0(\tilde{Y}(t))\nu_d(dt)\\
&\label{repr}+ \int_{W_n}a_1(t)H_1(\tilde{Y}(t))\nu_d(dt)+\sum_{k=2}^{\infty}\int_{W_n}a_k(t) \frac{H_k(\tilde{Y}(t))}{\sqrt{k!}}\nu_d(dt)\\
&\nonumber=\int_{W_n}\pr(X(t)\geq u)\nu_d(dt)+ \int_{W_n}a_1(t)\tilde{Y}(t)\nu_d(dt)+ \sum_{k=2}^{\infty}\int_{W_n}a_k(t) \frac{H_k(\tilde{Y}(t))}{\sqrt{k!}}\nu_d(dt).
\end{align}
Denote
\begin{align*}
    Y_n:&=\int_{W_n}F_u(\tilde{Y}(t),t)\nu_d(dt)-\int_{W_n}\pr(X(t)\geq u)\nu_d(dt),\\
    Z_n:&=\int_{W_n}a_1(t)\tilde{Y}(t)\nu_d(dt), \quad
    A_n:=\sum_{k=2}^{\infty}\int_{W_n}a_k(t) \frac{H_k(\tilde{Y}(t))}{\sqrt{k!}}\nu_d(dt).
\end{align*}
From expansion \eqref{repr} we have $Y_n=Z_n+A_n.$ Random variables  $\{Z_n\}_{n\in \N}$ are Gaussian. So, 
$(\Var Z_n)^{-1/2}(Z_n-\E Z_n)\sim N(0,1).$ Moreover, we prove that $\frac{\Var A_n}{\Var Z_n}\to 0, n\to \infty.$

Since Hermite polynomials form an orthonormal system in $L^2(\R,\varphi),$  we have (cf. \cite[Lemma 10.2]{Roz})
\begin{equation}
\label{eq161}
\E[ H_k(\tilde{Y}(t))H_m(\tilde{Y}(s)) ]=\delta_{km}k! \rho^k(t,s),\quad t,s\in \R^d.
\end{equation}
Moreover, we get 
$\E H_k(\tilde{Y}(t))=\int_\R H_k(x)H_0(x)\vf(x)dx=\delta_{k0}=0,k\in \N.$
{So, it follows from \eqref{eq161}  that $\Var Z_n = \sigma^2_{n,1}= \int_{W_n}\int_{W_n} a_1(t)a_1(s)\rho(t,s)\nu_d(dt) \nu_d(ds),$
and  $\Var A_n  =\sum_{k=2}^{\infty} \int_{W_n}\int_{W_n}a_k(t)a_k(s)\rho^k(t,s) \nu_d(dt) \nu_d(ds).$}
Since $|\rho(t,s)|\leq 1$ and $\sum_{k=0}^\infty a_k^2(t)\leq 1$, we have 
\begin{align*}
\nonumber
&\Var A_n\leq \sum_{k=2}^{\infty} \int_{W_n}\int_{W_n}|a_k(t)||a_k(s)||\rho(t,s)|^k \nu_d(dt) \nu_d(ds)\\
\nonumber&\leq  \int_{W_n}\int_{W_n}\left(\sum_{k=2}^{\infty}|a_k(t)||a_k(s)|\right) \rho^2(t,s) \nu_d(dt) \nu_d(ds)
\leq \int_{W_n}\int_{W_n} \rho^2(t,s) \nu_d(dt) \nu_d(ds).
\end{align*}
Thus, from condition \eqref{eq11} we get 
$$\frac{\Var A_n}{\Var Z_n}=\frac{\Var A_n}{\sigma_{n,1}^2}\leq \frac{\int_{W_n}\int_{W_n}\rho^2(t,s)\nu_d(dt) \nu_d(ds)}{\int_{W_n}\int_{W_n}a_1(t)a_1(s)\rho(t,s)\nu_d(dt) \nu_d(ds)}\to 0, n\to \infty.$$
It means that $\frac{A_n}{\sigma_{n,1}}$ converges to 0 in mean square sense, and hence it converges to 0 in distribution.

Thus, we obtain that {if the limiting distributions of 
$\frac{Y_n}{\sigma_{n,1}}$ and $\frac{Z_n}{\sigma_{n,1}}$ exist, then they coincide.} Combining this fact with $\E Z_n =0$ and $\frac{Z_n}{\sigma_{n,1}}\sim N(0,1),\, n\in \N,$ we obtain the statement of the Theorem.
\end{proof}
{The accuracy of normal approximation of $Y_n$ was considered in \cite{Leonenko88}.}

A Gaussian random field is positive associated (\textbf{PA}) or negative associated (\textbf{NA}) if its covariance function is non-negative or non-positive, respectively, cf. \cite{BulSh}.
In some cases, condition \eqref{eq11} can be formulated in terms of correlation function only. 
\begin{corollary}
\label{cor0-1}
Let function $f:\R\to \R$ satisfy $f(x)<u$ for all $x<0,$ $\lim_{x\to +\infty} f(x)>u,$ and \textbf{PA} random field $Y$ satisfy conditions of Theorem \ref{mainthm} with  $\inf_{t\in W_n,n\geq 1}\E Y^2(t)=\sigma_0^2>0.$ Then condition \eqref{eq11} is satisfied if
\begin{equation}
\label{cor0-2}
\lim_{n\to\infty}\frac{\int_{W_n}\int_{W_n}\rho^2(t,s)\nu_d(dt) \nu_d(ds)}{\int_{W_n}\int_{W_n}\rho(t,s)\nu_d(dt) \nu_d(ds)}= 0.
\end{equation}
\end{corollary}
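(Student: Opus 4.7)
The plan is to reduce condition \eqref{eq11} to condition \eqref{cor0-2} by showing that, under the stated hypotheses, the Hermite coefficient $a_1(t)$ is uniformly bounded away from $0$ and from above on $\bigcup_{n\ge 1} W_n$. Since $Y$ is \textbf{PA}, we have $\rho(t,s)\ge 0$, so the substitution of such two-sided bounds into the ratio in \eqref{eq11} immediately yields the desired equivalence.

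First I would record that, by the formula derived right after \eqref{eq13},
\[
a_1(t)=\int_{\R}\mathbbm{1}\{f(\sigma(t)x)\ge u\}\,x\,\varphi(x)\,dx.
\]
Because $f(y)<u$ for every $y<0$ and $\sigma(t)>0$, the indicator vanishes whenever $x<0$, so the integrand is non-negative and $a_1(t)\ge 0$ for every $t$. An upper bound $a_1(t)\le 1$ is free from Cauchy--Schwarz applied to $\langle F_u(\cdot,t),H_1\rangle_\varphi$ together with $\|F_u(\cdot,t)\|_\varphi\le 1$.

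The key step is the \emph{uniform} lower bound. From $\lim_{x\to+\infty}f(x)>u$ there exists $M>0$ with $f(y)\ge u$ for all $y\ge M$. For any $t$ with $\sigma(t)\ge\sigma_0$, the condition $x\ge M/\sigma_0$ forces $\sigma(t)x\ge M$, hence $\mathbbm{1}\{f(\sigma(t)x)\ge u\}=1$. Restricting the integral to $[M/\sigma_0,\infty)$ gives
\[
a_1(t)\ \ge\ \int_{M/\sigma_0}^{\infty} x\,\varphi(x)\,dx\ =\ \varphi(M/\sigma_0)\ =:\ c\ >\ 0,
\]
uniformly in $t\in\bigcup_{n\ge 1}W_n$. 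I would then note that because $\rho(t,s)\ge 0$, the bounds $c\le a_1(t)\le 1$ translate into
\[
c^{2}\!\int_{W_n}\!\!\int_{W_n}\!\rho(t,s)\,\nu_d(dt)\nu_d(ds)\ \le\ \int_{W_n}\!\!\int_{W_n}\!a_1(t)a_1(s)\rho(t,s)\,\nu_d(dt)\nu_d(ds).
\]

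Dividing $\int\!\!\int\rho^{2}$ by this lower bound, condition \eqref{cor0-2} immediately implies \eqref{eq11}, finishing the corollary. I do not expect any serious obstacle: the only delicate point is making sure the lower bound on $a_1(t)$ is genuinely uniform, which is exactly where the hypotheses $\inf_t\sigma(t)\ge\sigma_0>0$ and $\lim_{x\to+\infty}f(x)>u$ combine — the former replaces a pointwise positivity by a quantitative one, while the latter furnishes the threshold $M$ independent of $t$.
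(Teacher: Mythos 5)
Your proof is correct and follows essentially the same route as the paper: both establish a uniform positive lower bound $a_1(t)\ge\varphi(M/\sigma_0)$ (the paper writes $\varphi(u^*/\sigma(t))\ge\varphi(u^*/\sigma_0)$ after the substitution $z=\sigma(t)x$, which is the same estimate) and then use positivity of $\rho$ to absorb $a_1(t)a_1(s)$ into a constant in the denominator of \eqref{eq11}. The upper bound $a_1(t)\le 1$ you mention is not actually needed, but it does no harm.
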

\begin{proof}
Under proposed assumptions, coefficient $a_1$ can be bounded from below
$a_1(t)=\int_{\R}\mathbbm{1}\{f(\sigma(t)x)\geq u\}x\vf(x)dx=\int_0^\infty \mathbbm{1}\{f(z)\geq u\}\frac{z}{\sigma^2(t)}\vf\left(\frac{z}{\sigma(t)}\right)dz.$
Since $\lim_{x\to +\infty} f(x)>u,$ there exists $u^*>0$ such that $f(x)\geq u$ for $t>u^*.$ Therefore,
$a_1(t)\geq \int_{u^*}^\infty \frac{z}{\sigma^2(t)}\vf\left(\frac{z}{\sigma(t)}\right)dz = \vf\left(\frac{u^*}{\sigma(t)}\right)\geq \vf\left(\frac{u^*}{\sigma_0}\right).$ Thus,
\begin{equation*}
\frac{\int_{W_n}\int_{W_n}\rho^2(t,s)\nu_d(dt) \nu_d(ds)}{\int_{W_n}\int_{W_n}a_1(t)a_1(s)\rho(t,s)\nu_d(dt) \nu_d(ds)} \leq \frac{1}{\vf^2\left(\frac{u^*}{\sigma_0}\right)}\frac{\int_{W_n}\int_{W_n}\rho^2(t,s)\nu_d(dt) \nu_d(ds)}{\int_{W_n}\int_{W_n}\rho(t,s)\nu_d(dt) \nu_d(ds)}\to 0
\end{equation*}
as $n\to \infty.$
\end{proof}
In case of monotonic function $f$ we have the following corollary.
\begin{corollary}
\label{cor00}
Under the assumptions of Theorem \ref{mainthm} let $f$ be a non-decreasing function and $\{f^{-}(x)=\inf\{y\in \R, f(y)\geq x\}$ be its generalized inverse function. Then
\begin{equation}
\label{eq120}
\frac{\int_{W_n}\mathbbm{1}\{X(t)\geq u\}\nu_d(dt)-\int_{W_n}\Psi(f^{-}(u)/\sigma(t))\nu_d(dt)}{\sqrt{\int_{W_n}\int_{W_n}\vf\left(\frac{f^{-}(u)}{\sigma(t)}\right)\vf\left(\frac{f^{-}(u)}{\sigma(s)}\right)\rho(t,s)\nu_d(dt) \nu_d(ds)}}\overset{d}{\longrightarrow} N(0,1),n\to\infty,
\end{equation}
where $\Psi(u)=\int_u^{+\infty}\vf(x)dx.$ 

If $f(x)=x,x\in \R,$ $\sigma(t)=1,t\in \R^d$ and \eqref{cor0-2} holds true, then  
\begin{equation}
\label{eq121}
\frac{\int_{W_n}\mathbbm{1}\{X(t)\geq u\}\nu_d(dt)-\nu_d(W_n)\Psi(u)}{\vf(u)\sqrt{\int_{W_n}\int_{W_n}\rho(t,s)\nu_d(dt) \nu_d(ds)}}\overset{d}{\longrightarrow} N(0,1),n\to\infty.
\end{equation}
\end{corollary}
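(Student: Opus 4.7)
The plan is to derive both conclusions of Corollary \ref{cor00} as direct specializations of Theorem \ref{mainthm} by computing the coefficients $a_0(t) = \pr(X(t)\geq u)$ and $a_1(t)$ explicitly under the monotonicity assumption on $f$.

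First, since $f$ is non-decreasing and $f^-$ is its generalized inverse, the event $\{f(y)\geq u\}$ coincides with $\{y\geq f^-(u)\}$ up to a Lebesgue-null boundary set, which is irrelevant under integration against the smooth density $\vf$. Thus
\[
a_0(t) = \pr(f(Y(t))\geq u) = \pr\bigl(\tilde Y(t)\geq f^-(u)/\sigma(t)\bigr) = \Psi(f^-(u)/\sigma(t)),
\]
and using the identity $\int_c^\infty x\vf(x)\,dx = \vf(c)$ (which follows from $\vf'(x)=-x\vf(x)$),
\[
a_1(t) = \int_\R \mathbbm{1}\{f(\sigma(t)x)\geq u\}\, x\, \vf(x)\,dx = \int_{f^-(u)/\sigma(t)}^{\infty} x\vf(x)\,dx = \vf\!\left(\frac{f^-(u)}{\sigma(t)}\right).
\]

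Substituting these two expressions into the numerator and the denominator of \eqref{eq12} produces exactly the limit relation \eqref{eq120}. This relies on the assumptions of Theorem \ref{mainthm} being in force, including \eqref{eq111} and \eqref{eq11}, which are taken as hypotheses of the corollary.

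For the second part, I specialize $f(x)=x$ and $\sigma(t)\equiv 1$, so $f^-(u)=u$, giving $a_0(t)=\Psi(u)$ and $a_1(t)=\vf(u)>0$ for every $t$. In particular, $a_1$ is a positive constant, so $\nu_d(U)>0$ in \eqref{eq111} holds trivially, and the ratio in \eqref{eq11} reduces to
\[
\frac{\int_{W_n}\int_{W_n}\rho^2(t,s)\,\nu_d(dt)\nu_d(ds)}{\vf^2(u)\int_{W_n}\int_{W_n}\rho(t,s)\,\nu_d(dt)\nu_d(ds)},
\]
which vanishes as $n\to\infty$ by assumption \eqref{cor0-2}. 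Hence Theorem \ref{mainthm} applies, and \eqref{eq12} simplifies to \eqref{eq121} after pulling the constant $\vf(u)$ out of the normalizing integral. No step here is genuinely difficult; the only mild care required is the handling of the generalized inverse for non-strictly-monotone $f$, which is resolved by noting that the discrepancy between $\{f(y)\geq u\}$ and $\{y\geq f^-(u)\}$ lies on a set of Lebesgue measure zero and thus does not affect the Gaussian integrals defining $a_0$ and $a_1$.
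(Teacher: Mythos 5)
Your proposal is correct and is exactly the derivation the paper intends (the corollary is stated without proof as an immediate specialization of Theorem \ref{mainthm}): you compute $a_0(t)=\Psi(f^-(u)/\sigma(t))$ and $a_1(t)=\int_{f^-(u)/\sigma(t)}^\infty x\vf(x)\,dx=\vf(f^-(u)/\sigma(t))$ and substitute into \eqref{eq12}, with the null-set caveat about $\{f(y)\geq u\}$ versus $\{y\geq f^-(u)\}$ handled appropriately. Nothing further is needed.
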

Further in the paper, we consider normalized random fields with $\sigma^2(t)=1$ and give the examples of non-stationary covariance functions $\rho$ satisfying conditions \eqref{cor0-2}.

\subsection{Stationary random fields}
In this section, we consider further applications of Theorem \ref{mainthm} and assume that the random field $Y$ is stationary. Hence, its covariance function is invariant with respect to linear translations.
\begin{corollary}
\label{ch1:cor5}
Let $Y$ be a centered stationary Gaussian random field with covariance function $C(t)=\E[ Y(t)Y(0)],t\in \R^d,$ and $\E Y^2(0)=1.$ If $\langle H_1, \mathbbm{1}\{f(\cdot)\geq u\}\rangle_{\vf} \neq 0$ and 
\begin{equation}
\label{condcor1}
\frac{\int_{\R^d} C^2(t)\nu_d(W_n \cap (W_n-t))\nu_d(dt)}{ \int_{\R^d} C(t)\nu_d(W_n \cap (W_n-t))\nu_d(dt)} \to 0, \quad n\to \infty,
\end{equation}
then for $\{X(t)=f(Y(t)),t\in\R^d\}$ it holds
\begin{equation}
\label{cor21}
\frac{\int_{W_n}\mathbbm{1}\{X(t)\geq u\}dt-\nu_d(W_n)\pr(X(0)\geq u)}{\langle H_1, \mathbbm{1}\{f(\cdot)\geq u\}\rangle_{\vf} \sqrt{\int_{\R^d} C(t)\nu_d(W_n \cap (W_n-t))\nu_d(dt)}}\overset{d}{\longrightarrow} N(0,1)
\end{equation}
as $n\to\infty.$
\end{corollary}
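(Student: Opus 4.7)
The plan is to apply Theorem \ref{mainthm} directly, after rewriting the double integrals over $W_n \times W_n$ as single integrals over $\R^d$ via a change of variables.

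First I would observe that $\sigma(t) \equiv 1$, so the coefficient $a_1(t)$ defined in \eqref{eq111} reduces to the constant $a_1 := \langle H_1, \mathbbm{1}\{f(\cdot) \geq u\}\rangle_{\vf}$, which is nonzero by hypothesis. Consequently condition \eqref{eq111} is satisfied with $U = W_n$. By stationarity, $\rho(t,s) = C(t-s)$, and $\pr(X(t)\geq u) = \pr(X(0)\geq u)$ is constant in $t$, so $\int_{W_n}\pr(X(t)\geq u)\nu_d(dt) = \nu_d(W_n)\pr(X(0)\geq u)$, which matches the centering in \eqref{cor21}.

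Next, for any measurable $g$ such that the double integral converges, the substitution $\tau = t - s$ together with Fubini's theorem yields the standard identity
\[
\int_{W_n}\int_{W_n} g(t-s)\nu_d(dt)\nu_d(ds) = \int_{\R^d} g(\tau)\,\nu_d(W_n \cap (W_n-\tau))\,\nu_d(d\tau).
\]
Applying this with $g = C$ and $g = C^2$ shows that the ratio on the left-hand side of \eqref{eq11} equals
\[
\frac{\int_{\R^d} C^2(\tau)\,\nu_d(W_n \cap (W_n-\tau))\,\nu_d(d\tau)}{a_1^2 \int_{\R^d} C(\tau)\,\nu_d(W_n \cap (W_n-\tau))\,\nu_d(d\tau)},
\]
which tends to zero by \eqref{condcor1} after cancelling the nonzero constant $a_1^2$.

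Finally, the normalising factor in \eqref{eq12} becomes $|a_1|\sqrt{\int_{\R^d}C(\tau)\,\nu_d(W_n \cap (W_n-\tau))\,\nu_d(d\tau)}$; replacing $|a_1|$ by $a_1$ only multiplies the resulting sequence by $\pm 1$, which does not affect the $N(0,1)$ limit by symmetry of the standard normal law. The conclusion \eqref{cor21} is therefore an immediate consequence of \eqref{eq12}. I do not expect any serious obstacle: the only nontrivial point is the substitution identity above, which is routine, together with careful bookkeeping of the constant $a_1$ through the centering and the normalising denominator.
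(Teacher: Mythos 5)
Your proposal is correct and follows essentially the same route as the paper: the paper's own proof consists precisely of the substitution $t-s=u$, $s=v$ to convert the double integrals into $\int_{\R^d} C^k(u)\,\nu_d(W_n\cap(W_n-u))\,\nu_d(du)$, showing that \eqref{condcor1} is equivalent to \eqref{eq11}, and then invoking Theorem \ref{mainthm}. Your extra remark about the sign of $a_1$ versus $|a_1|$ in the normalisation is a correct (and slightly more careful) piece of bookkeeping that the paper leaves implicit.
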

\begin{proof}
Consider integrals in \eqref{eq11}
\begin{align*}
\frac{\int_{W_n}\int_{W_n}C^2(t-s)\nu_d(dt) \nu_d(ds)}{\int_{W_n}\int_{W_n}C(t-s)\nu_d(dt) \nu_d(ds)}&=\frac{\int_{\R^d}\int_{\R^d}C^2(t-s)\mathbbm{1}\{t\in W_n,s\in W_n\}\nu_d(dt) \nu_d(ds)}{\int_{\R^d}\int_{\R^d}C(t-s)\mathbbm{1}\{t\in W_n,s\in W_n\}\nu_d(dt) \nu_d(ds)}\\
=\left|\begin{array}{c}
     t-s=u  \\
     s=v 
\end{array}\right|&=\frac{\int_{\R^d}\int_{\R^d}C^2(u)\mathbbm{1}\{v\in (W_n-u),v\in W_n\}\nu_d(du) \nu_d(dv)}{\int_{\R^d}\int_{\R^d}C(u)\mathbbm{1}\{v\in (W_n-u),v\in W_n\}\nu_d(du) \nu_d(dv)}\\
&=\frac{\int_{\R^d} C^2(u)\nu_d(W_n \cap (W_n-u))\nu_d(du)}{ \int_{\R^d} C(u)\nu_d(W_n \cap (W_n-u))\nu_d(du)}.
\end{align*}
Thus, conditions \eqref{eq11} and \eqref{condcor1} are equivalent and the statement of the corollary follows from Theorem \ref{mainthm}.
\end{proof}

\begin{remark}
Assume that $Y$ is $\mathbf{PA}(\mathbf{NA}),$ then  condition \eqref{condcor1} can hold only if $Y$ {is long-range dependent.} Indeed, if $0<\int_{\R^d} |C(t)|dt<\infty,$ and $|C(t)|\leq 1,\, t\in \R^d,$ then $0<\int_{\R^d} C^2(t)dt<\infty.$ Using $\lim_{n\to\infty}\frac{\nu_d(W_n\cap (W_n-t))}{\nu_d(W_n)}=1,\, t\in \R^d,$ {e.g. \cite[Chapter 3, Lemma 1.2]{BulSh},} it follows that 
$$
\Delta_n:=\frac{\int_{\R^d} C^2(t)\nu_d(W_n \cap (W_n-t))\nu_d(dt)}{ \int_{\R^d} C(t)\nu_d(W_n \cap (W_n-t))\nu_d(dt)} \xrightarrow[n\to\infty]{} \frac{\int_{\R^d} C^2(t)\nu_d(dt)}{ \int_{\R^d} C(t)\nu_d(dt)}\in (0,+\infty).
$$

Measurable function $f:\R\to\R$ can be chosen arbitrarily, which means that $\E X(0)^p<+\infty$ for some $p>0$ need not be true. The normalization in limit \eqref{cor21} {must not be of CLT-type} $n^{-d/2}$ since it involves the square root of the integral of the  weighted non-integrable function $C,$ {see Example \ref{example27}.}
\end{remark}

In the multidimensional case $d>1$, the observation windows $W_n$ can extend differently in different directions. In order to parametrize the growth of $W_n$, we make some auxiliary notation. For some 
 $r_{n,l}>0,1 \leq l\leq d, n\in\N$  introduce ``normalized'' windows 
\begin{equation}
\label{Vndef}
V_n:=\left\{\left(\frac{x_1}{r_{n,1}},\ldots,\frac{x_d}{r_{n,d}}\right), (x_1,\ldots,x_d)\in W_n\right\}
\end{equation}
such that $\sup_{n\geq 1}\nu_d(V_n)<\infty.$ 
For instance, if $r_{n,l}=\sup\{|x_l|, (x_1,\ldots,x_d)\in W_n\},$ then $V_n \subseteq [-1,1]^d.$
Moreover, we assume that
there exists a ``limit'' $V$ of $V_n,$ i.e., 
$V\in \mathcal{B}(\R^d)$, and $\nu_d(W_n)\sim \nu_d(V)\prod_{l=1}^d r_{n,l},$ $\nu_d(V\Delta V_n)\to 0$ as $n\to\infty.$
\begin{corollary}
\label{ch1:cor6}
If $Y$ is a \textbf{PA}(\textbf{NA}) stationary random field with $C(t) \to 0,$ $\|t\|\to \infty,$ then condition \eqref{condcor1} holds if for some $\delta\in (0,1)$
\begin{equation}
\label{condcor2}
{ \prod_{i=1}^dr_{n,i}^{-1+\delta}}\int_{|t_i|\leq r_{n,i}} C(t)\nu_d(dt) \to + \infty, \quad n\to \infty.
\end{equation}
\end{corollary}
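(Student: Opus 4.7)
I would handle the \textbf{PA} case, so $C\ge 0$; the \textbf{NA} case follows by passing to $|C|$. Write $N_n$ and $D_n$ for the numerator and denominator in \eqref{condcor1}.

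The first step is to reduce the ratio to a simpler quantity. Given $\varepsilon>0$, the hypothesis $C(t)\to 0$ as $\|t\|\to\infty$ provides $R>0$ with $C(t)\le\varepsilon$ for $\|t\|>R$. Splitting $N_n$ at $\|t\|=R$ and using $C^2\le\varepsilon C$ on the tail together with $C^2\le C(0)C$ and $\nu_d(W_n\cap(W_n-t))\le\nu_d(W_n)$ on the core, one obtains $N_n/D_n\le\varepsilon+K_R\nu_d(W_n)/D_n$ with $K_R:=C(0)^2\nu_d(B_R(0))$. Since $\varepsilon>0$ is arbitrary, the corollary reduces to proving $\nu_d(W_n)/D_n\to 0$.

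For the lower bound on $D_n$ I would exploit the van Hove condition via a diagonal extraction: since $\nu_d(\partial W_n\oplus B_k(0))/\nu_d(W_n)\to 0$ for each fixed $k\in\N$, one can build $r_n\uparrow\infty$ such that $\nu_d(\partial W_n\oplus B_{r_n}(0))/\nu_d(W_n)\to 0$ still holds. A short segment argument shows that any $x\in W_n$ with $x+t\notin W_n$ satisfies $d(x,\partial W_n)\le\|t\|$, which gives, for $\|t\|\le r_n$,
\[
\nu_d(W_n\cap(W_n-t))\ge \nu_d(W_n)-\nu_d(\partial W_n\oplus B_{r_n}(0))=(1-o(1))\nu_d(W_n).
\]
Integrating $C$ against this bound over $\|t\|\le r_n$ yields $D_n\ge(1-o(1))\nu_d(W_n)\int_{\|t\|\le r_n}C(t)\,\nu_d(dt)$.

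The concluding step uses the following mild consequence of \eqref{condcor2}: $\int_{\R^d}C(t)\,\nu_d(dt)=+\infty$. Indeed, if this integral were finite, then the left-hand side of \eqref{condcor2} would be at most $\prod_i r_{n,i}^{\delta-1}\int_{\R^d}C$, which tends to $0$ as $\delta<1$ and $\prod_i r_{n,i}\sim\nu_d(W_n)/\nu_d(V)\to\infty$, contradicting the hypothesis. Monotone convergence then forces $\int_{\|t\|\le r_n}C(t)\,\nu_d(dt)\to+\infty$, giving $\nu_d(W_n)/D_n\to 0$ as required. The main subtlety lies in the diagonal construction of the growing radius $r_n$ compatible with van Hove, together with the boundary-layer bound; notably, the strict inequality $\delta<1$ enters only to force $\int C=+\infty$, after which \eqref{condcor2} plays no further role in the argument.
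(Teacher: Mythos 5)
Your proof is correct, and it takes a genuinely different route from the paper's. The paper changes variables to the normalized windows $V_n$, splits the numerator of \eqref{condcor1} over a shrinking box $V_n^\delta=\prod_i[-r_{n,i}^{-\delta},r_{n,i}^{-\delta}]$ and its complement, bounds the near-origin piece by $\prod_i r_{n,i}^{1-\delta}\big/\int_{|t_i|\le \beta r_{n,i}}C(t)\,\nu_d(dt)$ --- which is exactly where the polynomial rate in \eqref{condcor2} enters --- and kills the far piece by $\sup C$ over the complement. You instead use the decay $C(t)\to 0$ to absorb the entire tail of the numerator into $\varepsilon D_n$, reducing everything to $D_n/\nu_d(W_n)\to\infty$; that you obtain from the van Hove property via a diagonally extracted radius together with the boundary-layer inclusion, plus the observation that \eqref{condcor2} forces $\int_{\R^d}C=+\infty$ (your deduction is sound: $\prod_i r_{n,i}^{-1+\delta}=(\prod_i r_{n,i})^{\delta-1}\to 0$ since $\prod_i r_{n,i}\sim\nu_d(W_n)/\nu_d(V)\to\infty$, even though individual $r_{n,i}$ may stay bounded). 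Your argument therefore proves strictly more: for a \textbf{PA} field with $C(t)\to 0$ and any van Hove sequence, non-integrability of $C$ alone yields \eqref{condcor1}, and the specific rate in \eqref{condcor2} is immaterial; together with the Remark following Corollary \ref{ch1:cor5} this gives a clean dichotomy in terms of $\int_{\R^d}C$. What the paper's more hands-on computation buys is the quantitative lower bound on the denominator in terms of $\int_{|t_i|\le\beta r_{n,i}}C$, which is recycled in Lemma \ref{ch1:cor7-empt} to identify the normalizing sequence --- your qualitative argument does not produce that. Two cosmetic remarks: your diagonal radius $r_n$ clashes notationally with the paper's window scales $r_{n,i}$, and in the \textbf{NA} case \eqref{condcor2} as literally written cannot hold ($C\le 0$), so both your proof and the paper's implicitly read $|C|$ there.
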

\begin{proof}
For  $\delta\in (0,1)$ put $V_n^\delta=\prod_{i=1}^d [-r^{-\delta}_{n,i},r^{-\delta}_{n,i}].$ Changing variables $t_i=r_{n,i}s_i$ in  \eqref{condcor1} we get 
\begin{align}
\nonumber&\frac{\int_{\R^d} C^2(t)\nu_d(W_n \cap (W_n-t))\nu_d(dt)}{ \int_{\R^d} C(t)\nu_d(W_n \cap (W_n-t))\nu_d(dt)}\\
\nonumber&=\frac{\int_{V^\delta_n} C^2(r_{n,1}s_1,\ldots,r_{n,d}s_d)\nu_d(V_n \cap (V_n-s))\nu_d(ds)}{\int_{\R^d} C(r_{n,1}s_1,\ldots,r_{n,d}s_d)\nu_d(V_n \cap (V_n-s))\nu_d(ds)}\\
\nonumber&+\frac{\int_{\R^d \setminus V^\delta_n} C^2(r_{n,1}s_1,\ldots,r_{n,d}s_d)\nu_d(V_n \cap (V_n-s))\nu_d(ds)}{\int_{\R^d} C(r_{n,1}s_1,\ldots,r_{n,d}s_d)\nu_d(V_n \cap (V_n-s))\nu_d(ds)}\\
\nonumber&\leq \frac{\nu_d(V^\delta_n)\nu_d(V_n)}{ \int_{\R^d} C(r_{n,1}s_1,\ldots,r_{n,d}s_d)\nu_d(V_n \cap (V_n-s))\nu_d(ds)}\\
\nonumber&+\left(\sup_{|t_i|\geq r_{n,i}^\delta, t\in W_n }C(t)\right)\frac{ \int_{\R^d \setminus V^\delta_n} C(r_{n,1}s_1,\ldots,r_{n,d}s_d)\nu_d(V_n \cap (V_n-s))\nu_d(ds)}{\int_{\R^d} C(r_{n,1}s_1,\ldots,r_{n,d}s_d)\nu_d(V_n \cap (V_n-s))\nu_d(ds)}\\
\label{cor1:eq2}&\leq \frac{\nu_d(V^\delta_n)\nu_d(V_n)}{ \int_{\R^d} C(r_{n,1}s_1,\ldots,r_{n,d}s_d)\nu_d(V_n \cap (V_n-s))\nu_d(ds)}+\sup_{|t_i|\geq r_{n,i}^\delta, t\in W_n }C(t).
\end{align}
Take $\beta\in (0,1)$ such that $\nu_d(V_n \cap (V_n-s))\geq \frac{1}{2} \nu_d(V_n),s\in [-\beta,\beta]^d.$ Then \eqref{cor1:eq2}  can be bounded by
\begin{align}
\nonumber&\frac{2 \nu_d(V^\delta_n)}{  \int_{[-\beta,\beta]^d} C(r_{n,1}s_1,\ldots,r_{n,d}s_d)\nu_d(ds)}+\sup_{|t_i|\geq r_{n,i}^\delta, t\in W_n }C(t)\\
\label{star}&= \frac{2  \prod_{i=1}^dr_{n,i}^{1-\delta}}{\int_{|t_i|\leq \beta r_{n,i}} C(t)\nu_d(dt)}+\sup_{|t_i|\geq r_{n,i}^\delta, t\in W_n }C(t).
\end{align}
The latter two terms tend to 0 due to conditions of the corollary.
\end{proof}
\begin{example}
\label{example27}
For simplicity, let $d=1$, $W_n=[-n,n],$ and $C(t)\sim |t|^{-\eta}$, $\eta\in (0,1)$ as $t\to+\infty.$ 
From the proof of Corollary \ref{ch1:cor6} we get with $\delta=\frac{1-\eta}{2},$ $\beta=1,$ and $r_n=n$ that 
\begin{align*}
\Delta_n &\leq \frac{2^{1+(1+\eta)/2}(n/2)^{(1+\eta)/2}}{\int_0^{n/2} C(v)dv }+\sup_{v\geq n^{(1-\eta)/2}}C(v)\\
&\sim 2^{(3-\eta)/2}(n/2)^{(\eta-1)/2}(1-\eta)+\sup_{v\geq n^{(1-\eta)/2}}C(v) \to 0, n\to \infty.
\end{align*}
One can show that the normalization in the above limit theorem can be computed as 
$
\sigma_n^2:=\int_{\R}  C(t) \nu_1 \Big( W_n\cap (W_n-t) \Big) \, dt=2\int_{0}^{2n} (2n-t)  C(t) \, dt.$
Using the symmetry of $C$ and the substitution $s=(2n-t)/(2n)$ we write
${\int_{0}^{2n} (2n-t)  C(t) \, dt}=
{\int_{0}^1  s  C\big(2n(1-s)\big) \, ds} \;\sim 2^{2-\eta} B(2, 1-\eta) n^{2-\eta} $
as $n\to +\infty$,  which follows from the definition range  $p,q>0$ of the beta--function $B(p,q)$.  To summarize,  the limit \eqref{cor1} holds:
\begin{equation*}
\frac{\int_{-n}^n \mathbbm{1}\{X(t)>u\}\, dt -2n \pr(X(0)>u) }{\langle H_1, \mathbbm{1}\{f(\cdot)\geq u\}\rangle_{\vf} 2^{3/2-\eta/2} \sqrt{B(2,1-\eta)} n^{1-\eta/2}}\overset{d}{\longrightarrow} N(0,1), \quad n\to +\infty.
\end{equation*}
Since $a_1=\langle H_1, \mathbbm{1}\{f(\cdot)\geq u\}\rangle_{\vf}\neq 0,$ the function $f$ can not be even. Additionally, we require $\E f^{1+\theta}(Y(0))<+\infty$ for some $\theta\in (0,1)$. 

As an example, {we consider
$
f(x)=\mbox {sgn} (x) \left( e^{x^2/\beta^2}-1\right), \, x\in\R,
$
for some $\beta>\sqrt{2(1+\theta)}.$ It follows that $\E X^2(0)=\E\left(e^{Y^2(0)/\beta^2}-1 \right)^2=+\infty,$ $\E X^{1+\theta}(0)<\infty.$ It can be calculated that in this case
$
a_1^{-1}=\sqrt{2\pi}(1+u)^{\beta^2/2}  
$
for $u>0$.} 
\end{example}

In the next Lemma we check condition \eqref{condcor2} by using the asymptotic of correlation function $C$ at $\infty.$

\begin{lemma}
\label{ch1:cor7-empt}
Let conditions of Corollaries \ref{ch1:cor5}, \ref{ch1:cor6} hold true. Let  $W_n= \prod_{i=1}^d [a_{n,i},b_{n,i}]$ 
with $r_{n,i}=(b_{n,i}-a_{n,i})/2$ and there exist functions $\lambda,q:\R^d\to\R$ and such that $q \in L_1([-1,1]^d),$ $\kappa = \int_{[-1,1]^d}q(v)\prod_{i=1}^d (1-|v_i|)\nu_d(dv)>0,$ and
\begin{align*}
    \frac{C(2r_{n,1} v_1,\ldots, 2 r_{n,d} v_d)}{q(v_1,\ldots, v_d)}\sim \lambda(r_{n,1},\ldots,r_{n,d}),~n\to \infty,
\end{align*} 
uniformly on any rectangle $[a,1]^d,a\in (0,1).$ If there exists  $\delta\in (0,1)$ such that
\begin{equation}\label{cor7:eq0}\lambda (r_{n,1},\ldots,r_{n,d}) \prod_{i=1}^d {r_{n,i}^\delta} \to \infty, n\to \infty,
\end{equation}
then
for $\{X(t)=f(Y(t)),t\in\R^d\}$ it holds 
\begin{equation}
\label{cor7:eq}
\frac{\int_{W_n}\mathbbm{1}\{X(t)\geq u\}dt-\nu_d(W_n)\pr(X(0)\geq u)}{\langle H_1, \mathbbm{1}\{f(\cdot)\geq u\}\rangle_{\vf} \nu_d(W_n) \sqrt{\kappa \lambda(r_{n,1},\ldots,r_{n,d})}}\overset{d}{\longrightarrow} N(0,1)
\end{equation}
as $n\to\infty.$
\end{lemma}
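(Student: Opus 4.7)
The plan is to apply Corollary \ref{ch1:cor5} after verifying condition \eqref{condcor2} of Corollary \ref{ch1:cor6} and identifying the leading-order asymptotic of the variance $\int_{\R^d}C(t)\nu_d(W_n\cap(W_n-t))\nu_d(dt)$ with $\nu_d(W_n)^2\kappa\lambda(r_{n,1},\ldots,r_{n,d})$. Both reductions follow from the same underlying change of variables $t_i=2r_{n,i}v_i$, combined with the prescribed uniform asymptotic of $C$.

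For the rectangular window $W_n=\prod_i[a_{n,i},b_{n,i}]$ one has $\nu_d(W_n\cap(W_n-t))=\prod_{i=1}^d(2r_{n,i}-|t_i|)^+$, so the substitution $t_i=2r_{n,i}v_i$ gives
$$\int_{\R^d}C(t)\nu_d(W_n\cap(W_n-t))\nu_d(dt)=\nu_d(W_n)^2\int_{[-1,1]^d}C(2r_{n,1}v_1,\ldots,2r_{n,d}v_d)\prod_{i=1}^d(1-|v_i|)\nu_d(dv),$$
and analogously $\int_{|t_i|\le r_{n,i}}C(t)\nu_d(dt)=\nu_d(W_n)\int_{[-1/2,1/2]^d}C(2r_{n,1}v_1,\ldots,2r_{n,d}v_d)\nu_d(dv)$. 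For any fixed $a\in(0,1)$ I would split the inner domain into $R_a=\{v:\min_i|v_i|\ge a\}$ and $R_a^c$. The symmetry $C(-t)=C(t)$ extends the hypothesized uniform asymptotic from $[a,1]^d$ to the whole of $R_a$, so the $R_a$-part equals $\lambda(r_{n,1},\ldots,r_{n,d})\int_{R_a}q(v)\prod_i(1-|v_i|)\nu_d(dv)(1+o(1))$; as $a\to 0$ this tends to $\kappa\lambda(r_{n,1},\ldots,r_{n,d})$ by dominated convergence using $q\prod_i(1-|v_i|)\in L_1([-1,1]^d)$. The analogous computation without the factor $\prod(1-|v_i|)$ gives $\tilde\kappa\,\lambda(r_{n,1},\ldots,r_{n,d})$ for some $\tilde\kappa>0$, so condition \eqref{condcor2} rewrites as
$$\prod_{i=1}^d r_{n,i}^{-1+\delta}\int_{|t_i|\le r_{n,i}}C(t)\nu_d(dt)\sim 2^d\tilde\kappa\,\lambda(r_{n,1},\ldots,r_{n,d})\prod_{i=1}^d r_{n,i}^\delta\to +\infty$$
by \eqref{cor7:eq0}, so Corollary \ref{ch1:cor6}, and therefore Corollary \ref{ch1:cor5}, apply. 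Inserting the variance asymptotic into the denominator of \eqref{cor21} yields \eqref{cor7:eq}.

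The principal obstacle is the control of the near-origin region $R_a^c$, where $q$ is typically singular and the uniform asymptotic is unavailable. I would handle it using the crude bound $|C|\le 1$, which produces a contribution of order $a$ (the region has $v$-measure $O(a)$), and reconcile this with the possibly vanishing $\lambda(r_{n,1},\ldots,r_{n,d})$ by a diagonal choice $a=a_n\to 0$ that is slow enough to keep the uniform convergence on $[a_n,1]^d$ intact and, thanks to $\lambda(r_{n,1},\ldots,r_{n,d})\gg \prod_i r_{n,i}^{-\delta}$ from \eqref{cor7:eq0}, still make the near-origin contribution $o(\lambda(r_{n,1},\ldots,r_{n,d}))$. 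This interchange of the limits $n\to\infty$ and $a\to 0$ is the only delicate step; the rest amounts to substitutions and bookkeeping.
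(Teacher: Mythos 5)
Your overall strategy --- the substitution $t_i=2r_{n,i}v_i$, the identity $\int_{\R^d}C(t)\nu_d(W_n\cap(W_n-t))\nu_d(dt)=\nu_d^2(W_n)\int_{[-1,1]^d}C(2r_{n,1}v_1,\ldots,2r_{n,d}v_d)\prod_i(1-|v_i|)\nu_d(dv)$, and the split of $[-1,1]^d$ into a neighbourhood of the coordinate hyperplanes (crude bound $|C|\le 1$) and its complement (uniform asymptotic $C\sim q\lambda$) --- is exactly the paper's. The problem lies in how you choose the excluded region. You remove $R_a^c=\{v:\min_i|v_i|<a\}$ for fixed $a$ and then propose a diagonal choice $a=a_n\to0$. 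But your two requirements on $a_n$ pull in opposite directions, and the hypotheses do not guarantee both can be met: you need (i) $a_n=o(\lambda(r_{n,1},\ldots,r_{n,d}))$ so that the crude bound $O(a_n)$ is negligible against the main term $\kappa\lambda(r_{n,1},\ldots,r_{n,d})$, and (ii) the uniform asymptotic to persist on $[a_n,1]^d$. A diagonal extraction from ``uniform on $[a,1]^d$ for each fixed $a$'' yields \emph{some} $a_n\to0$ satisfying (ii), but with no control on its rate, whereas the rate demanded by (i) is dictated by $\lambda$, which tends to $0$ in all of the paper's examples (e.g.\ $\lambda=n^{-2\alpha}$). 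Condition \eqref{cor7:eq0} asserts $\prod_i r_{n,i}^{-\delta}=o(\lambda(r_{n,1},\ldots,r_{n,d}))$; it says nothing about an $a_n$ tied to the unquantified modulus of the uniform convergence, so your appeal to \eqref{cor7:eq0} at this point does not bridge the gap.

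The paper closes exactly this hole by a different cutoff: it removes $V_n^\delta=\prod_i[-r_{n,i}^{-\delta},r_{n,i}^{-\delta}]$ with the $\delta$ supplied by \eqref{cor7:eq0}. The crude bound on $V_n^\delta$ then contributes at most a constant times $\prod_i r_{n,i}^{2-\delta}$, which is $o(\lambda\prod_i r_{n,i}^2)$ \emph{directly} by \eqref{cor7:eq0} --- no interchange of limits is needed, because the hypothesis is calibrated to the Lebesgue measure of $V_n^\delta$ rather than to a modulus of convergence. (On the complementary region the paper, like you, invokes the asymptotic on an $n$-dependent set, and both arguments tacitly extend it from $[a,1]^d$ to the other orthants; your appeal to $C(-t)=C(t)$ only covers antipodal pairs when $d\ge2$, but this is a shared reading of the hypothesis rather than a flaw specific to your write-up.) Your reduction of \eqref{condcor2} and the final bookkeeping leading to \eqref{cor7:eq} are fine; the proof is incomplete precisely at the step you yourself flag as delicate, and the repair is the paper's $\delta$-dependent choice of the excluded neighbourhood of the origin.
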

\begin{proof}
First, we compute the asymptotic variance in \eqref{cor21}. From the proof of Corollary \ref{ch1:cor6} we get that
\begin{align}
\nonumber    &\int_{\R^d} C(t)\nu_d(W_n \cap (W_n-t))\nu_d(dt) \\
\nonumber    &=\prod_{i=1}^d r_{n,i}^2 \int_{\R^d} C(r_{n,1} s_1,\ldots, r_{n,d} s_d)\nu_d([-1,1]^d \cap ([-1,1]^d-s))\nu_d(ds)\\
 \label{cor7:eq3}   &=4^d \prod_{i=1}^d r_{n,i}^2 \int_{[-1,1]^d} C(2r_{n,1} v_1,\ldots, 2 r_{n,d} v_d)\prod_{i=1}^d (1-|v_i|)\nu_d(dv)\\
\nonumber    &=4^d \prod_{i=1}^d r_{n,i}^2 \left(\int_{V_n^\delta}+\int_{[-1,1]^d\setminus V_n^\delta}\right) C(2r_{n,1} v_1,\ldots, 2 r_{n,d} v_d)\prod_{i=1}^d (1-|v_i|)\nu_d(dv),
\end{align}
where  $V_n^\delta=\prod_{i=1}^d [-r^{-\delta}_{n,i},r^{-\delta}_{n,i}]$ and $\delta\in (0,1)$ is from condition \eqref{cor7:eq0}.
We can bound the first integral by {
$4^d \prod_{i=1}^d r_{n,i}^2 \int_{V_n^\delta}C(2r_{n,1} v_1,\ldots, 2 r_{n,d} v_d)\prod_{i=1}^d (1-|v_i|)\nu_d(dv)\leq 4^d \prod_{i=1}^d r_{n,i}^2 \nu_d(V_n^\delta)=8^d \prod_{i=1}^d r_{n,i}^{2-\delta}.$}
Consider the second integral
\begin{align*}
&4^d \prod_{i=1}^d r_{n,i}^2 \int_{[-1,1]^d\setminus V_n^\delta} C(2r_{n,1} v_1,\ldots, 2 r_{n,d} v_d)\prod_{i=1}^d (1-|v_i|)\nu_d(dv)\\
=&4^d \prod_{i=1}^d r_{n,i}^2 \int_{[-1,1]^d} \frac{C(2r_{n,1} v_1,\ldots, 2 r_{n,d} v_d)}{q(v_1,\ldots, v_d)}\mathbbm{1}\{r^{-\delta}_{n,i}\leq |v_i|\leq 1,1\leq i\leq d\}\\
\times& q(v)\prod_{i=1}^d (1-|v_i|)\nu_d(dv)
\underset{n\to\infty}{\sim} 4^d \lambda(r_{n,1},\ldots r_{n,d}) \prod_{i=1}^d r_{n,i}^2  \int_{[-1,1]^d}q(v)\prod_{i=1}^d (1-|v_i|)\nu_d(dv).
\end{align*}
Therefore, from condition \eqref{cor7:eq0} it  follows that {
$
\int_{\R^d} C(t)\nu_d(W_n \cap (W_n-t))\nu_d(dt) \sim$ $ 4^d \kappa \lambda(r_{n,1},\ldots r_{n,d})  \prod_{i=1}^d r_{n,i}^2
=\kappa \lambda(r_{n,1},\ldots r_{n,d})  \nu_d^2(W_n)
$ as $n\to \infty.$}
Condition \eqref{condcor2} from Corollary \ref{ch1:cor6} is checked using asymptotic relation \eqref{cor7:eq0}  similarly to \eqref{cor7:eq3}
\end{proof}
Let us illustrate the last Lemma by the following example.
\begin{example}
Let $d=3,$  $W_n=[0,n]\times[0,n^\gamma]\times[0,c],$ $c,\gamma>0,$ then $r_{n,1}=n/2,r_{n,2}=n^\gamma/2,r_{n,3}=c/2$ and $\nu_3(W_n)=cn^{1+\gamma}.$ Consider the covariance function  $C(x,y,z)=e^{-|z|}(1+x^2+y^2)^{-\alpha},(x,y,z)\in\R^3,$ with $\alpha\in \left(0,\frac{1}{2}\right).$

In the case $\gamma\in (0,1),$ we put $q(x,y,z)=|x|^{-2\alpha}e^{-c|z|},(x,y,z)\in\R^3$ and $\lambda(r_{n,1},r_{n,2},r_{n,3})=n^{-2\alpha},n\geq 1.$ Indeed, due to Lemma \ref{ch1:cor7-empt},
$$
    \frac{C(2r_{n,1} x,2 r_{n,2} y, 2 r_{n,3} z)}{q(x,y,z)}=\frac{(1+ x^2 n^2 +  y^2 n^{2\gamma})^{-\alpha}e^{-c|z|}}{|x|^{-2\alpha}e^{-c|z|}}\sim n^{-2\alpha},\,n\to \infty.
$$
Then $q\in L_1([-1,1]^3)$ and 
\begin{equation}
\label{exmp29:eq}
\frac{\int_{W_n}\mathbbm{1}\{X(t)\geq u\}dt-c n^{1+\gamma}\pr(X(0)\geq u)}{\langle H_1, \mathbbm{1}\{f(\cdot)\geq u\}\rangle_{\vf} c n^{1+\gamma-\alpha} \sqrt{\kappa} }\overset{d}{\longrightarrow} N(0,1),\,n\to\infty.
\end{equation}

Let $\gamma=1,$ then $q(x,y,z)=(x^2+y^2)^{-\alpha} e^{-c|z|},(x,y,z)\in\R^3$ and $\lambda(r_{n,1},r_{n,2},r_{n,3})=n^{-2\alpha},n\geq 1.$ Indeed, by Lemma \ref{ch1:cor7-empt},
$$
    \frac{C(2r_{n,1} x,2 r_{n,2} y, 2 r_{n,3} z)}{q(x,y,z)}=\frac{(1+ x^2 n^2 +  y^2 n^{2})^{-\alpha}e^{-c|z|}}{(x^2+y^2)^{-\alpha}e^{-c|z|}}\sim n^{-2\alpha},n\to \infty.
$$
Then $q\in L_1([-1,1]^3)$  and \eqref{exmp29:eq} holds true.

If $\gamma>1,$ then $q(x,y,z)=|y|^{-2\alpha} e^{-c|z|},(x,y,z)\in\R^3$ and $\lambda(r_{n,1},r_{n,2},r_{n,3})=n^{-2\gamma\alpha},n\geq 1.$ Then $q\in L_1([-1,1]^3)$  and 
\begin{equation*}
\frac{\int_{W_n}\mathbbm{1}\{X(t)\geq u\}dt-c n^{1+\gamma}\pr(X(0)\geq u)}{\langle H_1, \mathbbm{1}\{f(\cdot)\geq u\}\rangle_{\vf} c n^{1+\gamma(1-\alpha)} \sqrt{\kappa} }\overset{d}{\longrightarrow} N(0,1),n\to\infty.
\end{equation*}
\end{example}


\subsection{Spatio-temporal random fields}
In this section, we apply Theorem \ref{mainthm} to random fields which can posses different properties with respect  to the space and time coordinates.
First, we consider a separable covariance function with a stationary time-component.
\begin{theorem}
\label{cor1}
Let $\{Y(x,t),\,x\in \R^d, t\in \R\}$ be a centered \textbf{PA}(\textbf{NA})  Gaussian random field with covariance function $ \Cov(Y(x,t),Y(y,s))=C(x,y)\tilde{C}(|t-s|),$ $x,y\in \R^d,$  $s,t\in \R$ and $\E Y^2(x,t)=1.$ Assume that $\tilde{C}$ is non-negative and $\tilde{C}(r)\to 0,$ $r\to+\infty.$ Let $W_n=U_n\times(a_n,b_n)$ be a sequence of Borel sets such that 
$r_n:=b_n-a_n \to +\infty, n\to\infty,$  {$0<c_1\leq \nu_{d}(U_n)\leq c_2<+\infty$ and
$\tilde{\kappa}=\lim_{n\to \infty}\int_{U_n}\int_{U_n}C(x,y)\nu_d(dx) \nu_d(dy)\in (0,+\infty).$} If for some $\delta\in (0,1)$
\begin{equation}
\label{condcor}
\frac{1}{r^\delta}\int_0^r\tilde{C}(v)dv \to \infty, \quad r \to \infty,
\end{equation}
then 
\begin{equation}
\label{stcor}
\frac{\int_{a_n}^{b_n}\int_{U_n}\mathbbm{1}\{X(x,t)\geq u\}\nu_d(dx)dt-\nu_d(U_n)(b_n-a_n)\pr(X(0)\geq u)}{\langle H_1, \mathbbm{1}\{f(\cdot)\geq u\}\rangle_{\vf}\sqrt{2\tilde{\kappa}\int_{0}^{r_n}\tilde{C}(s)(r_n-s)ds}}
\end{equation}
tends to $N(0,1)$ in distribution as $n\to\infty.$ Here $0\in \R^{d+1}.$
\end{theorem}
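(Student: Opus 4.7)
I would apply Theorem \ref{mainthm} directly to $W_n=U_n\times(a_n,b_n)$, exploiting the separable correlation $\rho((x,t),(y,s))=C(x,y)\tilde{C}(|t-s|)$. Since $\sigma\equiv 1$, the function $a_1(t)$ in \eqref{eq111} reduces to the constant $a_1:=\langle H_1,\mathbbm{1}\{f(\cdot)\geq u\}\rangle_{\vf}$, which is nonzero by hypothesis. By Fubini and the substitution $u=t-s$, the denominator in \eqref{eq11} factors as
\begin{equation*}
a_1^2\left(\int_{U_n}\int_{U_n} C(x,y)\,\nu_d(dx)\nu_d(dy)\right)\cdot 2\int_0^{r_n}\tilde{C}(u)(r_n-u)\,du,
\end{equation*}
whose first factor tends to $a_1^2\tilde{\kappa}\in(0,\infty)$ by assumption; its square root coincides asymptotically with the normalization in \eqref{stcor}. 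The numerator in \eqref{eq11} admits the analogous factorization with $C^2$ and $\tilde{C}^2$ in place of $C$ and $\tilde{C}$.

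\textbf{Reducing to a one-dimensional ratio.} Since $Y$ is PA with unit variance, $0\leq C\leq 1$ and $0\leq \tilde{C}\leq 1$, hence $\int_{U_n^2}C^2\leq \int_{U_n^2}C\to\tilde{\kappa}$, so the spatial factors cancel up to a bounded constant. The ratio in \eqref{eq11} is thus controlled by a constant times
\begin{equation*}
R_n:=\frac{\int_0^{r_n}\tilde{C}^2(u)(r_n-u)\,du}{\int_0^{r_n}\tilde{C}(u)(r_n-u)\,du},
\end{equation*}
and the whole task collapses to proving $R_n\to 0$.

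\textbf{Key estimate and main obstacle.} I would establish $R_n\to 0$ by a splitting argument modelled on the proof of Corollary \ref{ch1:cor6}. Fix $\delta'\in(1-\delta,1)$ and split $[0,r_n]=[0,r_n^{1-\delta'}]\cup[r_n^{1-\delta'},r_n]$. On the short interval, the trivial bound $\tilde{C}^2\leq 1$ contributes at most $r_n^{2-\delta'}$ to the numerator. On the tail, $\tilde{C}(u)\leq \varepsilon_n:=\sup_{v\geq r_n^{1-\delta'}}\tilde{C}(v)\to 0$, so that this contribution is at most $\varepsilon_n$ times the full denominator. For the denominator itself, using $r_n-u\geq r_n/2$ on $[0,r_n/2]$ together with the long-memory assumption \eqref{condcor}, one obtains
\begin{equation*}
\int_0^{r_n}\tilde{C}(u)(r_n-u)\,du\geq \frac{r_n}{2}\int_0^{r_n/2}\tilde{C}(u)\,du\gg r_n^{1+\delta},
\end{equation*}
and the inequality $\delta+\delta'>1$ then drives the short-range contribution to zero, giving $R_n\to 0$. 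Once this is established, Theorem \ref{mainthm} yields convergence to $N(0,1)$ with normalization $a_1\sqrt{\int_{U_n^2}C\cdot 2\int_0^{r_n}\tilde{C}(u)(r_n-u)du}\sim a_1\sqrt{2\tilde{\kappa}\int_0^{r_n}\tilde{C}(u)(r_n-u)du}$, as in \eqref{stcor}. The main obstacle is precisely the last estimate: the naive bound $\tilde{C}^2\leq \tilde{C}$ only gives $R_n\leq 1$, so one must genuinely use \eqref{condcor} to extract a super-polynomial lower bound on the denominator that dominates the unavoidable short-range contribution to the numerator.
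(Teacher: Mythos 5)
Your proposal is correct and follows essentially the same route as the paper: reduce to Theorem \ref{mainthm}, factor the separable correlation so that the spatial ratio is bounded (using $0\le C\le 1$ and $\tilde{\kappa}>0$) and only the temporal ratio $R_n$ matters, then split the temporal integral at a sublinear power of $r_n$ and use \eqref{condcor} to make the denominator dominate the short-range contribution while the tail is controlled by $\sup_{v\ge r_n^{1-\delta'}}\tilde C(v)\to 0$. The paper splits at $r_n^{\delta}$ rather than your $r_n^{1-\delta'}$ with $\delta'\in(1-\delta,1)$, but this is only a reparametrization of the same estimate.
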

\begin{proof}
We check the conditions of Theorem \ref{mainthm}. First, it is obvious that $|C(x,y)|\leq 1$ and $a_1(t){=a_1}=\langle H_1, \mathbbm{1}\{f(\cdot)\geq u\}\rangle_{\vf}$ in \eqref{eq111}.
Rewrite the limit in \eqref{eq11} {multiplied by $a_1^2$} as
\begin{align}
\nonumber&\lim_{n\to\infty}\left|\frac{\int_{U_n}\int_{U_n}C^2(x,y)\nu_d(dx) \nu_d(dy) \int_{a_n}^{b_n}\int_{a_n}^{b_n}\tilde{C}^2(|t-s|)dt ds}{\int_{U_n}\int_{U_n}C(x,y)\nu_d(dx) \nu_d(dy) \int_{a_n}^{b_n}\int_{a_n}^{b_n}\tilde{C}(|t-s|)dt ds}\right|\\
\nonumber&=\lim_{n\to\infty}\frac{\int_{U_n}\int_{U_n}C^2(x,y)\nu_{d}(dx)\nu_{d}(dy)}{\int_{U_n}\int_{U_n}|C(x,y)|\nu_{d}(dx)\nu_{d}(dy)}\frac{r_n^2\int_{0}^{1}\tilde{C}^2(r_nt)(1-|t|)dt }{r_n^2\int_{0}^{1}\tilde{C}(r_n t)(1-|t|)dt}\\
\label{corlim}&\leq \lim_{n\to\infty} \frac{\int_{0}^{r_n}\tilde{C}^2(s)(r_n-s)ds }{\int_{0}^{r_n}\tilde{C}(s)(r_n-s)ds}.
\end{align}
Take $\delta\in (0,1)$ such that \eqref{condcor} holds true. Then 
\begin{align*}
\int_{0}^{r_n}\tilde{C}^2(v)(r_n-v)dv &= \int_0^{r_n^\delta} \tilde{C}^2(v)(r_n-v)dv + \int_{r_n^\delta}^{r_n} \tilde{C}^2(v)(r_n-v)dv\\
&\leq \int_0^{r_n^\delta} (r_n-v)dv + \left(\sup_{v\geq r_n^\delta}\tilde{C}(v)\right) \int_{r_n^\delta}^{r_n} \tilde{C}(v)(r_n-v)dv. 
\end{align*}
So, the limit in \eqref{corlim} is bounded from above by
\begin{equation}
\label{ineq11} \frac{r_n^{1+\delta}}{\int_0^{r_n} \tilde{C}(v)(r_n-v)dv}+ \frac{\int_{r_n^\delta}^{r_n} \tilde{C}(v)(r_n-v)dv}{\int_{0}^{r_n}\tilde{C}(v)(r_n-v)dv } \sup_{v\geq r_n^\delta}\tilde{C}(v)
\leq \frac{2^{1+\delta}(r_n/2)^{\delta}}{\int_0^{r_n/2} \tilde{C}(v)dv }+\sup_{v\geq r_n^\delta}\tilde{C}(v).
\end{equation}
From \eqref{condcor} we get that $r_n^{\delta}\left(\int_0^{r_n} \tilde{C}(v)dv \right)^{-1}\to 0,r_n\to+\infty.$
Moreover, we have $\sup_{v\geq r_n^\delta}\tilde{C}(v)\to 0.$ Hence, the limit in \eqref{ineq11} is equal to 0 and condition \eqref{eq11} of Theorem \ref{mainthm} is satisfied. So, the asymptotic normality of \eqref{stcor} follows from Theorem~\ref{mainthm}.
\end{proof}

Applying the approach of the above proof, we get the following evident corollary for non-separable space-time covariance functions.
\begin{corollary}
\label{cor11}
 Let $W_n=U_n\times(a_n,b_n)$ be a sequence of Borel sets such that $r_n:=b_n-a_n \to +\infty, n\to\infty,$ and {$0<c_1\leq \nu_{d}(U_n)\leq c_2<+\infty.$} Let $\{Y(x,t),x\in \R^d, t\in \R\}$ be a centered \textbf{PA}(\textbf{NA})  Gaussian random field with covariance function satisfying $$ d_1\tilde{C}(|t-s|) \leq \Cov(Y(x,t),Y(y,s))\leq d_2\tilde{C}(|t-s|),\, x,y\in \R^d,\, s,t\in \R,$$ where $d_1,d_2>0$ and $\E Y^2(x,t)=1.$ Assume that $\tilde{C}$ is non-negative and $\tilde{C}(r)\to 0,$ $r\to+\infty.$  {If for some $\delta\in (0,1)$
$r^{-\delta}\int_0^r\tilde{C}(v)dv \to \infty, \, r \to \infty,$
then the sequence \eqref{stcor} is asymptotically standard normal as $n\to \infty.$}
\end{corollary}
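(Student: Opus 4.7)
The plan is to reduce Corollary \ref{cor11} to Theorem \ref{mainthm} by exploiting the fact that the two-sided bounds $d_1\tilde{C}(|t-s|)\leq \rho((x,t),(y,s))\leq d_2\tilde{C}(|t-s|)$ make the spatial structure irrelevant for verifying condition \eqref{eq11}. Since $\mathbb{E}Y^2(x,t)=1$, we have $\sigma\equiv 1$, so the function $F_u(x,t)=\mathbbm{1}\{f(x)\geq u\}$ is independent of $t$ and therefore $a_1(t)\equiv a_1:=\langle H_1,\mathbbm{1}\{f(\cdot)\geq u\}\rangle_{\varphi}$, which I implicitly assume to be nonzero in order for \eqref{stcor} to be meaningful.

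Writing $p=(x,t)$, $q=(y,s)$ and applying the upper bound in the numerator and the lower bound in the denominator of the ratio in \eqref{eq11}, the spatial volume factor $\nu_d(U_n)^2$ cancels exactly, leaving
$$\frac{\int_{W_n}\int_{W_n}\rho^2(p,q)\,dp\,dq}{a_1^2\int_{W_n}\int_{W_n}\rho(p,q)\,dp\,dq}\;\leq\; \frac{d_2^2}{a_1^2\,d_1}\cdot \frac{\int_0^{r_n}\tilde{C}^2(v)(r_n-v)\,dv}{\int_0^{r_n}\tilde{C}(v)(r_n-v)\,dv}.$$
This is precisely the temporal ratio appearing in \eqref{corlim} in the proof of Theorem \ref{cor1}. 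Splitting the numerator at $v=r_n^\delta$ with $\tilde{C}\leq 1$ on $[0,r_n^\delta]$ and $\tilde{C}^2(v)\leq \tilde{C}(v)\,\sup_{v\geq r_n^\delta}\tilde{C}(v)$ on $[r_n^\delta,r_n]$, combined with $\tilde{C}(r)\to 0$ and the growth assumption $r^{-\delta}\int_0^r\tilde{C}(v)\,dv\to\infty$, yields convergence to $0$ exactly as in inequality \eqref{ineq11}. Hence condition \eqref{eq11} of Theorem \ref{mainthm} is fulfilled.

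It follows that the integral functional, normalized by $\sqrt{a_1^2\int_{W_n}\int_{W_n}\rho(p,q)\,dp\,dq}$, converges in distribution to $N(0,1)$. The sandwich bounds identify this variance, up to a multiplicative constant determined by $d_1,d_2$ and the analogue of $\tilde{\kappa}$ from Theorem \ref{cor1}, with the denominator in \eqref{stcor}. The only real obstacle is bookkeeping around this normalizing constant: the substantive analytic content, namely the vanishing of the ratio in \eqref{eq11}, requires no new estimates beyond those already developed in the proof of Theorem \ref{cor1}, because the spatial part enters only multiplicatively and cancels in the ratio.
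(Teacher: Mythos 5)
Your proposal is correct and follows essentially the same route as the paper, which itself offers no separate proof of Corollary \ref{cor11} beyond the remark that the argument of Theorem \ref{cor1} carries over: the sandwich bounds reduce condition \eqref{eq11} to the purely temporal ratio, which is killed by the splitting at $r_n^\delta$ exactly as in \eqref{ineq11}. Your closing caveat about the normalizing constant is apt — under two-sided bounds alone the true variance $\int_{W_n}\int_{W_n}\rho$ is only comparable to, not asymptotically equal to, the denominator written in \eqref{stcor}, so the precise statement one actually obtains is asymptotic normality with the exact normalization $a_1\sqrt{\smash[b]{\int_{W_n}\int_{W_n}\rho}}$; this is an imprecision in the corollary's formulation rather than a gap in your argument.
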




\begin{example}
Consider a stationary Gaussian random field $\{Y(\mathbf{x},t),\mathbf{x}\in \R^d,$ $t>0\}$ on observation windows $W_n=[0,1]^{d}\times [0,n], n\geq 1$  with the covariance function 
$$C_G(\mathbf{x},t)=\frac{1}{|t|^{2\alpha}+1}\exp\left(-\frac{\|x\|^{2\gamma}}{(|t|^{2\alpha}+1)^\gamma}\right),\,\mathbf{x}\in \R^d,\,t\in \R,$$
where parameters $\alpha\in (0,1]$ and $\gamma \in (0,1]$ govern the smoothness of the purely temporal and purely spatial covariance, see \cite{gneiting}.

Since $Y$ is stationary, we can apply  Lemma  \ref{ch1:cor7-empt} with $q(\mathbf{x},t)=|t|^{-2\alpha}$ and $\lambda(k_1,\ldots,k_{d+1})=k_{d+1}^{-2\alpha}.$
Indeed, $\frac{|t|^{2\alpha}}{|nt|^{2\alpha}+1}\exp\left(-\frac{\|x\|^{2\gamma}}{(|nt|^{2\alpha}+1)^\gamma}\right)\sim \frac{1}{n^{2\alpha}},n\to \infty.$ Therefore,  $\kappa= \int_{[-1,1]^d}\prod_{i=1}^d (1-|v_i|)\nu_d(dv)\int_{-1}^{1}|v|^{-2\alpha} (1-|v|)dv= \frac{1}{(1-2\alpha)(1-\alpha)}$ for $\alpha\in (0,1/2)$  and \eqref{cor7:eq}  becomes
\begin{equation*}
\frac{\int_{W_n}\mathbbm{1}\{X(\mathbf{x},t)\geq u\}\nu_d(d\mathbf{x})dt-n \pr(X(0)\geq u)}{\langle H_1, \mathbbm{1}\{f(\cdot)\geq u\}\rangle_{\vf} n^{1-\alpha} \sqrt{\kappa} }\overset{d}{\longrightarrow} N(0,1),n\to\infty.
\end{equation*}

To illustrate Corollary \ref{cor11}, we consider the following example of a non-separable covariance function.
\end{example}
\begin{example} Let $\{\tilde{Y}(x,t),x,t\in\R\}$ be a stationary Gaussian random field with covariance function $C_G$ as above. We make a quadratic transformation of the spatial coordinates to get a non-stationary random field $X=\{f(\tilde{Y}(\mathbf{x}^{\top} A\mathbf{x},t)), \mathbf{x}\in \R^d,$ $t>0\},$ where $A$ is a symmetric real $d\times d$-matrix.
Then
$$ \E [\tilde{Y}(\mathbf{x}^{\top} A\mathbf{x},t)Y(\mathbf{y}^{\top} A\mathbf{y},s)]=\frac{1}{|t-s|^{2\alpha}+1}\exp\left(-\frac{|\mathbf{x}^{\top} A\mathbf{x}-\mathbf{y}^{\top} A\mathbf{y}|^{2\gamma}}{(|t-s|^{2\alpha}+1)^\gamma}\right)$$ and we can apply Corollary \ref{cor11} due to
$$  \frac{e^{-(2d)^\gamma\|A \|_1^{2\gamma}}}{|t-s|^{2\alpha}+1} \leq \E [\tilde{Y}(\mathbf{x}^{\top} A\mathbf{x},t)Y(\mathbf{y}^{\top} A\mathbf{y},s)]\leq \frac{1}{|t-s|^{2\alpha}+1},$$ for all $\mathbf{x},\mathbf{y}\in [0,1]^d, t,s>0,$ where $\|A\|_1=\max_{1\leq j\leq d}\sum_{i=1}^d |a_{ij}|$ is the matrix norm induced by the sum norm $\|\cdot\|_1$ of $\R^d.$
\end{example}

\section{Non-Gaussian Limits}
Theorem \ref{mainthm} has been proved for the subordinated Gaussian random fields with  $\rank \mathbbm{1}\{f(\cdot)  \geq u\} =1.$ Now we consider the limiting behaviour of  general integral functionals of Gaussian random fields. In this case, the limiting distribution in the corresponding limit theorem is a multiple Wiener-It\^{o} integral.
\subsection{Non-stationary random fields}
Due to Kahrunen's theorem,  a Gaussian random field $Y=\{Y(t),t\in \R^d\}$ has the spectral representation
\begin{equation}
\label{Ydef}
Y(t)=\int_{\R^d}h(x,t)M(dx),t\in \R^d,
\end{equation}
where $M$ is a symmetric complex-valued Gaussian random measure with the Lebesgue control measure and $h(\cdot,t)\in L^2(\R^d)$ for all $t\in \R^d.$
\begin{remark}
If a function $h:\R^m\times \R^m\to \mathbb{C}$ is symmetric, $h(-x,\cdot)=\overline{h(x,\cdot)},x\in \R^d,$ then the random field $Y$ in \eqref{Ydef} is real-valued due to the symmetry of $M.$ In general,  $M$ can have  some other control measure $m(\cdot).$ If $m(\cdot)$ is absolutely continuous with  density $\mu:\R^d\to \R_+$, then  
we can rewrite $Y(t)=\int_{\R^d}h_s(x,t)M_s(dx),$ where $h_s(x,\cdot)=h(x,\cdot)\sqrt{\mu(x)},x\in\R^d$ and $M_s(A)=(m(A))^{-1/2}M(A),A\in \mathcal{B}(\R^d)$ is a Gaussian random measure with  the Lebesgue control measure.
\end{remark}
Moreover, we assume that the system $\{h(\cdot,t),t \in \R^d\}$ is complete in $L^2(\R^d).$ Here, $h(x,t)$ can be considered as a linear filter applied to a Gaussian random field  measure $M.$
Consequently, the covariance function $\rho:\R^d\times \R^d\to \R$ of $Y$ has a representation
$$\rho(t,s)=\int_{\R^d} h(x,t)\overline{h(x,s)}\nu_d(d x),\,t,s\in \R^d,$$
which is real-valued if $h(\cdot,t)$ is symmetric for all $t\in \R^d.$

For $x_j=(x_{j,1},\ldots,x_{j,d})\in \R^d,$ $j=1,\dots,m,$ introduce 
\begin{equation}
\label{def:limit}I_n(x_1,\ldots,x_m):= \sigma^{-1}_{n,m}\prod_{l=1}^d r_{n,l}^{-m/2}\int_{W_n}\prod_{j=1}^m h\left(\left(\frac{x_{j,1}}{r_{n,1}},\ldots, \frac{x_{j,d}}{r_{n,d}}\right), t\right) \nu_d(dt),
\end{equation}
where  $\sigma_{n,m}^2=m! \int_{W_n}\int_{W_n}\rho^m(t,s)\nu_d(dt)\nu_d(ds).$
Before stating the main results of this section we study the limit of $I_n$ as $n\to \infty.$ We illustrate it with the help of the following example on a filtered Gaussian random field.
\begin{example}
\label{exmpl1}
Consider the special case $d=1,m =2,$ $W_n=[0,n],$ and the filter  $$h(x,t)=\frac{1}{\sqrt{2\Gamma(1-2\alpha)}}\frac{e^{i g(tx)-|x|/2}}{|x|^\alpha},x\in \R, t\geq 0,$$
 with $\alpha\in \left(\frac{1}{4},\frac12\right).$ Let $g\in C^2(\R)$ be increasing, odd, $g'(x)\geq c>0,$ $x\in \R$ and $\{g(x),x\geq 0\}$ be convex. Then the corresponding filtered  Gaussian random process $Y$ is given by {$Y(t)=(2\Gamma(1-2\alpha))^{-1/2}\int_\R {\exp({i g(tx)-|x|/2})}|x|^{-\alpha}M(dx),t\geq 0.$}
The covariance function of $Y$ then equals
\begin{align}
\nonumber    \rho(t,s)&=\int_\R h(x,t)\overline{h(x,s)}dx=\frac{1}{2\Gamma(1-2\alpha)}\int_\R e^{i(g(tx)-g(sx))}\frac{e^{-|x|}}{|x|^{2\alpha}}dx.
\end{align}
Therefore, $\sigma_{n,1}^2$ equals
\begin{align}
\nonumber    \sigma_{n,1}^2&=\int_0^n\int_0^n \rho(t,s)dt ds =\frac{1}{2\Gamma(1-2\alpha)} \int_0^n\int_0^n \int_\R e^{i(g(tx)-g(sx))}\frac{e^{-|x|}}{|x|^{2\alpha}}dx dt ds\\
&=\frac{n^{2\alpha+1}}{2\Gamma(1-2\alpha)}  \int_\R \left|\int_0^1 e^{ig(ty)} dt \right|^2 \frac{e^{-|y/n|}}{|y|^{2\alpha}}dy\sim C_{\alpha,1} n^{2\alpha+1} \text{ as }n\to \infty.
\end{align}
Similarly, $\sigma_{n,2}^2$ equals
\begin{align}
\nonumber    &\sigma_{n,2}^2=2 \int_0^n\int_0^n \rho^2(t,s)dt ds \\
\nonumber    &=\frac{1}{2\Gamma^2(1-2\alpha)} \int_0^n\int_0^n \int_{\R^2} e^{i(g(tx_1)+g(tx_2)-g(sx_1)-g(sx_2))}\frac{e^{-|x_1|-|x_2|}}{|x_1 x_2|^{2\alpha}}dx_1 dx_2 dt ds\\
&=\frac{n^{4\alpha}}{2\Gamma^2(1-2\alpha)}   \int_{\R^2} \left|\int_0^1 e^{ig(ty_1)+ig(ty_2)} dt \right|^2 \frac{e^{-\frac{|y_1|+|y_2|}{n}}}{|y_1 y_2|^{2\alpha}}dy_1dy_2\sim C_{\alpha,2} n^{4\alpha}
\end{align}
 as $n\to \infty.$ Show that $C_{\alpha,1}$ and $C_{\alpha,2}$ are positive and finite due to the Lebesgue's dominated convergent theorem. Indeed, 
\begin{equation}
    \label{int:b1}2\Gamma(1-2\alpha)C_{\alpha,1}=\lim_{n\to \infty} \int_\R \left|\int_0^1 e^{ig(ty)} dt \right|^2 \frac{e^{-|y/n|}}{|y|^{2\alpha}}dy\leq \int_\R \left|\int_0^1 e^{ig(ty)} dt \right|^2 \frac{1}{|y|^{2\alpha}}dy.
\end{equation}
By integration by parts, we get 
\begin{align*}
&\left|\int_0^1 e^{ig(ty)} dt\right|\leq \left|\frac{1}{iy}\left(\frac{e^{ig(y)}}{g'(y)}-\frac{e^{ig(0)}}{g'(0)}\right)\right|+\left|\frac{1}{i y}\int_0^1\frac{y g''(ty)}{(g'(ty))^2}e^{ig(ty)}dt\right|\\
&\leq  \frac{1}{|y|}\left(\frac{1}{|g'(y)|}+\frac{1}{|g'(0)|}\right)+\frac{1}{ |y|}\left|\frac{1}{g'(y)}-\frac{1}{g'(0)}\right|\leq \frac{4}{c_1}\frac{1}{|y|}.
\end{align*}
Together with obvious inequality $\left|\int_0^1 e^{ig(ty)} dt\right|\leq 1,$ we bound the right-hand side of \eqref{int:b1} by $\int_\R \left(\left(\frac{4}{c_1}\right)^2\frac{1}{y^2} \wedge 1\right)\frac{1}{|y|^{2\alpha}}dy<\infty$ if 
$2\alpha<1.$ Similarly,
\begin{align*}
\left|\int_0^1 e^{ig(ty_1)+ig(ty_2)} dt\right|
\leq 2\left(\frac{1}{|y_1g'(y_1)+y_2g'(y_2)|}+\frac{1}{g'(0)|y_1+y_2|}\right)\wedge 1.
\end{align*}
Since $g''\geq 0,$ {we have  $|y_1g'(y_1)+y_2g'(y_2)|\geq \Big||y_1|g'(|y_1|)-|y_2|g'(|y_2|)\Big|\geq ||y_1|-|y_2||$ $\times (g'(y_2)\vee g'(y_1))\geq c_2 \Big||y_1|-|y_2|\Big|.$}
Thus,
\begin{align*}
 &   2\Gamma^2(1-2\alpha)C_{\alpha,2}\leq \int_{\R^2} \left|\int_0^1 e^{ig(ty_1)+ig(ty_2)} dt \right|^2 \frac{1}{|y_1 y_2|^{2\alpha}}dy_1dy_2 \\
& \leq \frac{4}{c_2^2}\int_{\R^2} \left(\frac{1}{(|y_1|-|y_2|)^2}\wedge 1+\frac{1}{(y_1+y_2)^2}\wedge 1\right) \frac{1}{|y_1 y_2|^{2\alpha}}dy_1dy_2<\infty
\end{align*}
if $2\alpha\in (1/2,1).$

Then sequence $I_n$ from \eqref{def:limit} has the following form 
\begin{align*}I_n(x_1,x_2)&=\frac{1}{\sigma_{n,2}n}\int_0^n e^{i g(x_1t/n)+i g(x_2t/n)} \frac{e^{-\frac{|x_1|+|x_2|}{2n}}n^{2\alpha}}{|x_1 x_2|^\alpha}dt\\
&\to \frac{1}{\sqrt{C_{\alpha,2}}}\int_0^1  \frac{e^{i g(x_1s)+i g(x_2s)}}{|x_1 x_2|^\alpha}ds=:I(x_1,x_2)
\end{align*}
in $L^2(\R^2)$ as $n\to \infty.$
Moreover, $\frac{\sigma^2_{n,2}}{\sigma^2_{n,1}}=O(n^{2\alpha-1}) $ as $n\to \infty.$

The possible examples of function $g$ are $\sinh$ and $\{x+x^{<\beta>},x\in \R\}$ for $\beta\geq 1,$ where $x^{<\beta>}=|x|^{\beta}\mathrm{sign}(x).$
\end{example}
\begin{example}
Let us consider the filter in Example \ref{exmpl1} with $g(x)=x^{<\beta>},$ $m\geq 1,$ ${h}(x,t)=\frac{1}{\sqrt{2\Gamma(1-2\alpha)}}\frac{e^{i (tx)^{<\beta>}-|x|/2}}{|x|^\alpha},x\in \R,$ and $\beta\geq 1,$ $\alpha\in \left(\frac{m-1}{2m},\frac{1}{2}\right).$  Here, it does not hold $g'(x)\geq c>0$ for all $x\in \R.$ By a substitution of variables and Jordan's lemma, it holds
\begin{align}
\label{exmpl2:b}   &\left|\int_0^1 e^{i(yt)^{<\beta>}} dt\right|\leq\frac{1}{\beta|y|}\left|\int_0^{|y|} e^{is^{\beta}} ds\right|
 \leq \frac{C_{\beta}}{|y|}\wedge 1,
\end{align} 
where $C_\beta$ is a positive finite constant.
Then, similarly to Example \ref{exmpl1}, for $m\geq 1$ we have 
$\sigma_{n,m}^2\sim C_{\alpha,\beta,m} n^{(2\alpha-1)m+2}, \sigma_{n,m+1}^2\sim C_{\alpha,\beta,m+1} n^{(2\alpha-1)m+2+(2\alpha-1)}$ as $n\to \infty$ and
\begin{align*}I_n(x_1,\ldots,x_m)&=\frac{1}{\sigma_{n,m}n^{m/2}}\int_0^n \exp\left(i \frac{x^{<\beta>}_1+\cdots +x^{<\beta>}_m}{n^\beta}t^\beta\right) \frac{e^{-\frac{|x_1|+\ldots+|x_m|}{2n}}n^{m\alpha}}{|x_1 \cdots x_m|^\alpha}dt\\
&\to \frac{1}{\sqrt{C_{\alpha,\beta,m}}}\int_0^1  \frac{e^{i (x^{<\beta>}_1+\cdots +x^{<\beta>}_m)s^\beta}}{|x_1 \cdots x_m|^\alpha}ds=:I(x_1,\ldots,x_m)
\end{align*}
in $L^2(\R^m)$ as $n\to \infty.$
Indeed, $I\in L^2(\R^{m})$ because
\begin{align*}
     &\int_{\R^m} \left|\int_0^1  e^{i (x^{<\beta>}_1+\cdots +x^{<\beta>}_m)s^\beta}ds\right|^2|x_1 \cdots x_m|^{-2\alpha}dx_1\ldots dx_m\\
     &\stackrel{\eqref{exmpl2:b}}{\leq} C^2_\beta \int_{\R^m} \left(\frac{1}{|x_1+\cdots +x_m|^2}\wedge 1\right)\frac{1}{|x_1 \cdots x_m|^{2\alpha}}dx_1\ldots dx_m<\infty
\end{align*}
for $2\alpha \in \left(1-\frac{1}{m},1\right).$
\end{example}

\begin{theorem}
\label{ch2:theor}
Let $\{Y(t),t\in \R^d\}$ be a real valued centered Gaussian random field with $\E [Y(t)]^2=1$ and covariance function $\rho(t,s)=\Cov(Y(t),Y(s)),$ $t,s\in \R^d,$ which has spectral representation
\begin{equation}
\label{rho:spectr} 
\rho(t,s)=\int_{\R^d} h(x,t)\overline{h(x,s)}\nu_d(d x), \quad t,s\in \R^d,
\end{equation}
where $h(\cdot,t)\in L^2(\R^d)$ for all $t\in \R^d.$
Let $(W_n)_{n\in \N}$ be a van Hove sequence. Then for $m\in \N$ the variance $\sigma_{n,m}^2$ is equal to 
\begin{align}
\label{ch2:theor:eq}
\sigma_{n,m}^2=m!\int_{\R^{dm}}\left|\int_{W_n}\prod_{j=1}^m h(x_j,t)\nu_d(dt) \right|^2 \nu_d(d x_1)\ldots \nu_d(d x_m).
\end{align}

Assume that there exist sequences $r_{n,l},n\geq 1,$ $1 \leq l\leq d$ such that 
$I_n \to I$  in $L^2(\R^{dm})$ as $n\to\infty.$ Then 
\begin{align}
\label{limit:gen}\frac{1}{\sigma_{n,m}}\int_{W_n}H_m(Y(t))\nu_d(dt) \xrightarrow[n\to\infty]{d} \int_{\R^{dm}}^\prime I(x_1,\ldots,x_m) M(dx_1)\ldots M(dx_m).
\end{align}
\end{theorem}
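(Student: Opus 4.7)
The plan is to reduce the statement to the $L^2$-continuity of the multiple Wiener--It\^o integral with respect to its kernel, combined with the spectral representation of $H_m(Y(t))$. Denote by $J_m(g) := \int_{\R^{dm}}^\prime g(x_1,\ldots,x_m)\, M(dx_1)\cdots M(dx_m)$ the $m$-fold Wiener--It\^o integral with respect to $M$. Since $Y(t) = \int h(\cdot,t)\, dM$ has unit variance, the classical Hermite--chaos identity $H_m(Y(t)) = J_m(h(\cdot,t)^{\otimes m})$ holds, and a stochastic Fubini argument yields
$$\int_{W_n} H_m(Y(t))\,\nu_d(dt) = J_m(f_n), \qquad f_n(x_1,\ldots,x_m) := \int_{W_n}\prod_{j=1}^m h(x_j,t)\,\nu_d(dt).$$
The It\^o isometry $\E[J_m(g)^2] = m!\,\|g\|^2_{L^2(\R^{dm})}$ for symmetric kernels then gives formula \eqref{ch2:theor:eq} directly, since by Fubini and \eqref{rho:spectr} one has $\|f_n\|^2_{L^2} = \int_{W_n}\int_{W_n}\rho^m(t,s)\,\nu_d(dt)\,\nu_d(ds) = \sigma_{n,m}^2/m!$.

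My next step is to identify the normalised kernel $\tilde f_n := \sigma_{n,m}^{-1} f_n$ with a rescaling of $I_n$. The anisotropic change of variables $y_{j,l} = x_{j,l}/r_{n,l}$ in \eqref{def:limit} gives
$$\tilde f_n(y_1,\ldots,y_m) = \prod_{l=1}^d r_{n,l}^{m/2}\, I_n(r\circ y_1,\ldots,r\circ y_m),$$
where $r\circ y := (r_{n,1}y_1,\ldots,r_{n,d}y_d)$ denotes coordinate-wise dilation. To transfer this rescaling inside the stochastic integral I would invoke the self-similarity of Gaussian white noise: the image measure $B\mapsto M(r^{-1}\circ B)$ has variance $\prod_l r_{n,l}^{-1}\,\nu_d(B)$, so $M'(B) := \prod_l r_{n,l}^{1/2}\, M(r^{-1}\circ B)$ is again a standard white noise on $\R^d$ and hence equal in distribution to $M$. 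Substituting $x_j = r\circ y_j$ inside $J_m(\tilde f_n)$ and absorbing the Jacobian factors into this scaling would give the distributional identity $J_m(\tilde f_n) \stackrel{d}{=} J_m(I_n)$.

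The final step is to pass to the limit. The It\^o isometry for $J_m$ turns the hypothesis $I_n \to I$ in $L^2(\R^{dm})$ into $J_m(I_n) \to J_m(I)$ in $L^2(\Omega)$, hence in distribution, so
$$\sigma_{n,m}^{-1}\int_{W_n} H_m(Y(t))\,\nu_d(dt) = J_m(\tilde f_n) \stackrel{d}{=} J_m(I_n) \xrightarrow[n\to\infty]{d} J_m(I),$$
which is exactly \eqref{limit:gen}. The main technical obstacle I anticipate lies in the clean justification of the anisotropic scaling inside the stochastic integral, since the rescaling factors $r_{n,l}$ differ across coordinates; one has to verify carefully that the rescaled symmetrised kernel on $\R^{dm}$ is compatible with the It\^o construction and that the self-similarity argument for the white noise $M$ indeed applies in this anisotropic form (see, e.g., \cite{nualart}). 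The remaining ingredients --- the Hermite-to-Wiener-chaos identification, stochastic Fubini, and the $L^2$-continuity of $J_m$ --- are then standard.
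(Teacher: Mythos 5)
Your proposal is correct and follows essentially the same route as the paper: the Hermite--chaos identity $H_m(Y(t))=J_m(h(\cdot,t)^{\otimes m})$, a stochastic Fubini step to obtain $J_m(f_n)$, the It\^o isometry together with \eqref{rho:spectr} for formula \eqref{ch2:theor:eq}, the coordinate-wise scaling of the white noise $M$ to identify $J_m(\sigma_{n,m}^{-1}f_n)\stackrel{d}{=}J_m(I_n)$, and $L^2$-continuity of $J_m$ to pass to the limit. The anisotropic scaling step you flag as a potential obstacle is handled in the paper exactly as you describe, by applying the scaling property $M(d(y_{k,1}/r_{n,1},\ldots,y_{k,d}/r_{n,d}))\stackrel{d}{=}\prod_{l}r_{n,l}^{-1/2}M(dy_k)$ coordinate-wise after the change of variables.
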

\begin{proof}
Note that ${\|h(\cdot,t)\|_{L_2(\R^d)}}=1,t\in \R^d.$ For Hermite polynomials we have the following formula 
\begin{align}
H_m(Y(t))&=\int_{\R^{dm}}^\prime \prod_{j=1}^m h(x_j,t) M(dx_1)\ldots M(dx_m),\,t\in \R^d,
\end{align}
see, for example, \cite[Proposition 1.1.4]{nualart}.
Integrate both sides over $W_n$ with respect to $\nu_d(dt)$ and
apply Fubini theorem for multiple Wiener-It\^{o} integrals (see \cite[Theorem 2.1]{Pipiras}). Indeed,  
{$\int_{W_n} \int_{\R^{dm}} \prod_{j=1}^m |h(x_j,t)|^2 \nu_d(x_1)\cdots \nu_d(x_m)\nu_d(dt)=\int_{W_n} \nu_d(dt)<\infty, n\in \N.$} So,
\begin{align*}
\frac{1}{\sigma_{n,m}}\int_{W_n}H_m(Y(t))\nu_d(dt)&=\frac{1}{\sigma_{n,m}} \int_{\R^{dm}}^\prime \int_{W_n} \prod_{j=1}^m h(x_j,t) \nu_d(dt) M(dx_1)\ldots M(dx_m).
\end{align*}

By the scaling property of Gaussian random measures, one has
$$M\left(d \left(\frac{y_{k,1}}{r_{n,1}},\ldots,\frac{y_{k,d}}{r_{n,d}}\right)\right)\stackrel{d}{=}\frac{1}{\sqrt{r_{n,1}}\cdots \sqrt{r_{n,d}}} M\left(d \left(y_{k,1},\ldots,y_{k,d}\right)\right),\, 1 \leq k\leq m.$$ After the change of variables $y_{k,l}=r_{n,l}x_{k,l}, 1 \leq k\leq m, 1 \leq l\leq d,$  the last integral equals
\begin{align*}
\frac{1}{\sigma_{n,m}\prod_{l=1}^d r_{n,l}^{m/2}} \int_{\R^{dm}}^\prime\int_{W_n}\prod_{j=1}^m h\left(\left(\frac{y_{j,1}}{r_{n,1}},\ldots, \frac{y_{j,d}}{r_{n,d}}\right), t\right) \nu_d(dt) M(dy_1)\ldots M(dy_m).
\end{align*}
Thus, we obtain 
{$\frac{1}{\sigma_{n,m}}\int_{W_n}H_m(Y(t))\nu_d(dt) \stackrel{d}{=}\int_{\R^{dm}}^\prime I_n(y_1,\ldots,y_m) M(dy_1)\ldots M(dy_m).$}
Since $I_n\to I,n\to \infty$ in $L^2(\R^{dm}),$ we get convergence  \eqref{limit:gen}.
Applying the representation \eqref{rho:spectr} we rewrite $\sigma_{n,m}^2$ as
\begin{align*}
\nonumber\sigma_{n,m}^2%
&=m!\int_{W_n}\int_{W_n}\prod_{j=1}^m \left(\int_{\R^d} h(x_j,t)\overline{h(x_j,s)} \nu_d(d x_j)\right) \nu_d(dt) \nu_d(ds)\\
&=m!\int_{\R^{dm}}\left|\int_{W_n}\prod_{j=1}^m h(x_j,t)\nu_d(dt) \right|^2 \nu_d(d x_1)\ldots \nu_d(d x_m).
\end{align*}
\end{proof}
Under the conditions of Theorem \ref{ch2:theor} we have the following corollary.
\begin{corollary}
\label{ch2:cor01}
Let $Y=\{Y(t),t\in \R^d\}$ be a real valued centered measurable \textbf{PA} Gaussian random field with unit variance and  covariance function $\{\rho(s,t),s,t\in \R^d\}.$  Let $F\in L^2(\R,\vf)$ be  a Borel function on $\R^d$ with $m:=\rank F\geq 2$  and $b_m=\langle F,H_m\rangle_{\vf}/\sqrt{m!}.$ If $\sigma_{n,m+1}/\sigma_{n,m}\to 0,n\to \infty,$  then 
\begin{align}
\nonumber&\frac{\sqrt{m!}}{\sigma_{n,m}b_m}\left(\int_{W_n}F(Y(t))\nu_d(dt)-\nu_d(W_n)\E(F(Y(0))\right)\\
\label{ch:eq2}
&\overset{d}{\longrightarrow} \int_{\R^{dm}}^\prime I(x_1,\ldots,x_m) M(dx_1)\ldots M(dx_m),\quad n\to\infty.
\end{align}
\end{corollary}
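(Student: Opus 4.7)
The plan is to use the Hermite expansion $F(x)=\sum_{k\geq m}(b_k/\sqrt{k!})H_k(x)$ in $L^2(\R,\vf)$ (note that $\rank F\geq 2$ forces $b_0=\E F(Y(0))=0$, so the centring is automatic) and to isolate the rank-$m$ term. Writing
\begin{equation*}
\frac{\sqrt{m!}}{\sigma_{n,m}b_m}\left(\int_{W_n}F(Y(t))\nu_d(dt)-\nu_d(W_n)\E F(Y(0))\right)=\frac{1}{\sigma_{n,m}}\int_{W_n}H_m(Y(t))\nu_d(dt)+R_n,
\end{equation*}
where
\begin{equation*}
R_n=\frac{\sqrt{m!}}{\sigma_{n,m}b_m}\sum_{k=m+1}^{\infty}\frac{b_k}{\sqrt{k!}}\int_{W_n}H_k(Y(t))\nu_d(dt),
\end{equation*}
Theorem~\ref{ch2:theor} applied to the first summand yields the desired Wiener–It\^{o} limit. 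It therefore suffices to show that $R_n\to 0$ in $L^2(\Omega)$ and to conclude by Slutsky.

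Since $\E[H_k(Y(t))H_j(Y(s))]=\delta_{kj}\,k!\,\rho^k(t,s)$, the summands defining $R_n$ are pairwise uncorrelated, and Fubini gives
\begin{equation*}
\Var R_n=\frac{m!}{\sigma_{n,m}^2 b_m^2}\sum_{k=m+1}^{\infty}b_k^2\int_{W_n}\int_{W_n}\rho^k(t,s)\nu_d(dt)\nu_d(ds)=\frac{m!}{\sigma_{n,m}^2 b_m^2}\sum_{k=m+1}^{\infty}\frac{b_k^2\,\sigma_{n,k}^2}{k!}.
\end{equation*}
The \textbf{PA} assumption enters crucially here: because $0\leq \rho(t,s)\leq 1$, we have $\rho^k(t,s)\leq \rho^{m+1}(t,s)$ pointwise for every $k\geq m+1$, hence $\sigma_{n,k}^2/k!\leq \sigma_{n,m+1}^2/(m+1)!$. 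Parseval's identity $\sum_k b_k^2=\|F\|^2_{L^2(\R,\vf)}<\infty$ then yields
\begin{equation*}
\Var R_n\leq \frac{\|F\|^2_{L^2(\R,\vf)}}{(m+1)\,b_m^2}\cdot\frac{\sigma_{n,m+1}^2}{\sigma_{n,m}^2}\longrightarrow 0
\end{equation*}
by the hypothesis $\sigma_{n,m+1}/\sigma_{n,m}\to 0$.

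I expect the main technical point to be the first step, namely the justification that the Hermite series may be interchanged with the spatial integral and then transferred inside the probability space. This is routine: the bound $\int_{W_n}\|F(\cdot)\|^2_{L^2(\R,\vf)}\nu_d(dt)=\nu_d(W_n)\|F\|^2_{L^2(\R,\vf)}<\infty$ makes Fubini applicable, exactly as in the expansion \eqref{repr} inside the proof of Theorem~\ref{mainthm}. Once this decomposition is in place, everything else is the bookkeeping sketched above; the positive-association hypothesis is used \emph{only} through the pointwise domination $0\leq \rho^k\leq \rho^{m+1}$ for $k\geq m+1$, which is precisely what reduces the infinite Hermite remainder to a single comparable variance $\sigma_{n,m+1}^2$.
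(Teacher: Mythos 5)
Your proof is correct and follows essentially the same route as the paper: Hermite expansion, isolation of the rank-$m$ term, orthogonality together with $0\le\rho\le 1$ (from \textbf{PA}) to dominate the remainder variance by a constant multiple of $\sigma_{n,m+1}^2/\sigma_{n,m}^2$, and Theorem~\ref{ch2:theor} for the leading term. One small caveat: under the paper's definition the Hermite rank is a minimum over $k\ge 1$, so $\rank F\ge 2$ does not force $b_0=\E F(Y(0))=0$; this is harmless, since your displayed decomposition subtracts the mean explicitly and the rest of the argument never uses that claim.
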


\begin{proof}
We can repeat the lines of the proof of Theorem \ref{mainthm} to get that
\begin{align*}
\int_{W_n}F(Y(t))\nu_d(dt) &= \nu_d(W_n) \E F(Y(0))\\
&+b_m \int_{W_n}\frac{H_m(Y(t))}{\sqrt{m!}} \nu_d(dt) +\sum_{k=m+1}^\infty \frac{b_k}{\sqrt{k!}} \int_{W_n}H_k(Y(t)) \nu_d(dt).
\end{align*}
Since $0\leq \rho(t,s)\leq 1,$ we have by relation \eqref{eq161} that 
\begin{align*}
\Var&\left(\sum_{k=m+1}^\infty \frac{b_k}{\sqrt{k!}} \int_{W_n}H_k(Y(t)) \nu_d(dt)\right)=\sum_{k=m+1}^\infty b_k^2  \int_{W_n} \int_{W_n}\rho^{k}(t,s)\nu_d(dt)\nu_d(ds)\\
&\leq \sum_{k=m+1}^\infty b_k^2  \int_{W_n} \int_{W_n}\rho^{m+1}(t,s)\nu_d(dt)\nu_d(ds)=\sigma^2_{n,m+1}\sum_{k=m+1}^\infty b_k^2 .
\end{align*}
Therefore, 
\begin{align}
&\lim_{n\to \infty}\frac{\int_{W_n}F(Y(t))\nu_d(dt)- \nu_d(W_n)\E(F(Y(0))}{b_m\sigma_{n,m}}
\label{ch:eq3}\stackrel{L^2(\Omega)}{=}  \lim_{n\to \infty} \frac{\int_{W_n}\frac{H_m(Y(t))}{\sqrt{m!}} \nu_d(dt)}{\sigma_{n,m}}.
\end{align}
It follows from Theorem \ref{ch2:theor}  that limit \eqref{ch:eq3} is equal to \eqref{ch:eq2} in distribution.
\end{proof}

Evidently, we can apply the above corollary to the  volumes of excursion sets by setting  $F(x)=F_u(x):=\mathbbm{1}\{f(x)\geq u\}$ for $u\in \R.$

\begin{example}
 If $f$ is symmetric, i.e., $f(x)=f_0(|x|)$ for some $f_0:\R_+\to\R_+,$ then $\langle F_u,H_1\rangle_{\vf}=\int_\R \mathbbm{1}\{f_0(|x|)\geq u\}x \varphi(x)dx=0.$ { If $f_0$ is non-decreasing and $f_0^{(-1)}(u)\geq 1$ then $\langle F_u,H_2\rangle_{\vf}=2\int_{\R_+} \mathbbm{1}\{f_0(x)\geq u\}(x^2-1) \varphi(x)dx$ is positive. In such case $m=2.$}

 Let $f(x)=f_1(|x|)\mathbbm{1}\{|x|\leq 1\}+f_2(|x|)\mathbbm{1}\{|x|> 1\},\,x\in \R,$ where $f_1:[0,1]\to \R_+$ and $f_2:[1,+\infty)\to \R_+$ are non-decreasing and non-increasing functions, respectively. Then $\E\langle F_u,H_1\rangle_{\vf}=0$, $\langle F_u,H_3\rangle_{\vf}=0,$ and
\begin{equation}
    \label{fm4}
    \langle F_u,H_2\rangle_{\vf}=-2\int_0^{f_1^{-1}(u)}(1-x^2)\varphi(x)dx+2\int_{f_2^{-1}(u)}^{+\infty}(x^2-1)\varphi(x)dx.
\end{equation}
Therefore, if $f_1^{-1}(u)$ and $f_2^{-1}(u)$ are such that the right hand side of \eqref{fm4} equals to zero, then $m=4.$
\end{example}
\subsection{Stationary random fields}



Now, consider the case of stationary random fields. Let $\{Y(t),t\in \R^d\}$ be a real-valued measurable stationary  centered Gaussian random field with $\E [Y(t)]^2=1$ and covariance function $C(t)= \Cov(Y(t),Y(0)),t\in \R^d,$ which is continuous at zero. It follows from Bochner's theorem that function $C$ has spectral representation 
\begin{equation}
\label{rho:spectr2} 
C(t)=\int_{\R^d} e^{i\langle x,t\rangle}G(dx), \,t\in \R^d,
\end{equation}
where $G$ is its spectral measure. Moreover, we assume that there exists a spectral density $g\in L^2(\R^d)$ of $G.$ 

We take observation windows $W_n$ as in \eqref{Vndef} with "the limit" $V$ of $V_n,$ i.e., $V\in \mathcal{B}(\R^d)$ and  $\nu_d(V{\Delta} V_n)\to 0,\, n\to\infty.$ Then for any $x_k\in \R^d , 1 \leq k\leq m$ it holds $$\left|\int_{V_n}e^{i\langle x_1+\cdots+x_m,t\rangle}\nu_d(dt)-\int_{V}e^{i\langle x_1+\cdots+x_m,t\rangle}\nu_d(dt)\right|\leq \nu_d(V{\Delta} V_n)\to 0,n\to \infty,$$ uniformly in $(x_1,\ldots,x_m).$
Moreover, the functions 
\begin{align}
\label{K:def}
K_{V}(x):=\int_V e^{i\langle x,t\rangle} \nu_d(dt), \quad x\in \R^d
\end{align}
are square integrable on $\R^d$ for any  $V\in \mathcal{B}(\R^d)$ with $\nu_d(V)<\infty$ as a Fourier transform of indicator function $\mathbbm{1}(t\in V).$ 

Let the behaviour of spectral density $g$ at 0 be {"similar" to function $\lambda:\R^d\to \R_+$ in the following sense. Assume that there exists a point-wise limit $Q:=\lim_{n\to \infty}Q_n,$ where 
\begin{align}
\label{Qn:def}Q_n(z):=\frac{1}{\lambda(r_{n,1},\ldots,r_{n,d})}\sqrt{g\left(\frac{z_{1}}{r_{n,1}},\ldots,\frac{z_{d}}{r_{n,d}}\right)},\,z=(z_1,\ldots,z_d)\in \R^d,
\end{align}
and
$d_n:={\lambda^m(r_{n,1},\ldots,r_{n,d})}{\sigma^{-1}_{n,m}} \prod_{l=1}^d r^{1-m/2}_{n,l},\, n,m\in \N.$
}
\begin{theorem}
\label{ch2:theor2}Let the field $\{Y(t),t\in \R^d\}$ be as above. Assume that there exist  functions $L,K:\R^d\to \R$ such that for any $n\in \N$ \begin{equation}
\label{cond:star}
\begin{gathered}
    Q_n(x)\leq L(x),\, x\in \R^d, \text{ and } K_V(x),K_{V_n}(x)\leq K(x),\, x\in \R^d,  \\
    \text{and }K(x_1+\ldots+ x_m)\prod_{j=1}^m L(x_j)\in L^2(\R^{md}).
\end{gathered}  
\end{equation}
 Then 
 \begin{equation}
\label{s:eq2-3}
\sigma^2_{n,m}\sim c_{m,V} m! \frac{\lambda^{2m}(r_{n,1},\ldots,r_{n,d})}{\prod_{l=1}^d r_{n,l}^{m-2}},\,n\to \infty,
\end{equation}
where 
{$c_{m,V}=\int_{\R^{dm}} \prod_{j=1}^m Q^2(y_j) |K_{V}(y_1+\cdots+y_m)|^2 \nu_d(dy_1)\ldots \nu_d(dy_m),$}
 and
\begin{align}
\label{limit:thm2}&\frac{\int_{W_n} H_m(Y(t))\nu_d(dt) }{\sigma_{n,m}}
 \xrightarrow[n\to\infty]{d}\\
\nonumber &\frac{1}{\sqrt{m! c_{m,V}}}\int_{\R^{dm}}^\prime \prod_{j=1}^m Q(y_j) K_{V}(y_1+\cdots+y_m) M(dy_1)\ldots M(dy_m).
\end{align}
\end{theorem}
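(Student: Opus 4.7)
The plan is to apply Theorem~\ref{ch2:theor} to the specific spectral kernel associated with a stationary field. Since $Y$ has spectral density $g$, it admits the symmetric Karhunen-type representation $Y(t)=\int_{\R^d}e^{i\langle x,t\rangle}\sqrt{g(x)}\,M(dx)$, so the general filter $h$ from \eqref{Ydef} specialises to $h(x,t)=e^{i\langle x,t\rangle}\sqrt{g(x)}$. Substituting this into \eqref{ch2:theor:eq} yields $\sigma_{n,m}^2 = m!\int_{\R^{dm}}\prod_{j=1}^m g(x_j)\,|K_{W_n}(x_1+\cdots+x_m)|^2\,\nu_d(dx_1)\cdots\nu_d(dx_m)$, and after pulling the Fourier factor outside the $t$-integral in \eqref{def:limit}, the kernel $I_n$ collapses to a product of a rescaled spectral factor and a rescaled Fourier factor.

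The next step is the componentwise substitution $x_{j,l}\mapsto y_{j,l}/r_{n,l}$ combined with the scaling identity $K_{W_n}\big((y_1+\cdots+y_m)/r_n\big)=\prod_l r_{n,l}\cdot K_{V_n}(y_1+\cdots+y_m)$, itself a direct consequence of the definition \eqref{K:def} and the change of variable $t_l=r_{n,l}s_l$. Applied to the variance this produces $\sigma_{n,m}^2 = m!\,\lambda^{2m}(r_{n,1},\ldots,r_{n,d})\prod_l r_{n,l}^{2-m}\int_{\R^{dm}}\prod_j Q_n^2(y_j)\,|K_{V_n}(y_1+\cdots+y_m)|^2\,\nu_d(dy_1)\cdots\nu_d(dy_m)$. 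Applied to $I_n$ it yields the clean form $I_n(y_1,\ldots,y_m)=d_n\prod_j Q_n(y_j)\,K_{V_n}(y_1+\cdots+y_m)$ with $d_n$ the scalar introduced immediately before the theorem.

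The heart of the argument is then a single dominated convergence estimate. Pointwise, $Q_n\to Q$ by hypothesis, and $K_{V_n}(x)\to K_V(x)$ uniformly in $x$ because $|K_{V_n}(x)-K_V(x)|\leq \nu_d(V\Delta V_n)\to 0$. Hypothesis \eqref{cond:star} supplies the $L^2$-integrable envelope $\prod_j L(y_j)\cdot K(y_1+\cdots+y_m)$, which dominates the integrand of $\sigma_{n,m}^2$ (after extraction of the prefactor) and also dominates $I_n/d_n$ uniformly in $n$. Passing to the limit in the variance yields \eqref{s:eq2-3}, which forces $d_n\to 1/\sqrt{m!\,c_{m,V}}$. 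With $d_n$ bounded, the same envelope gives $L^2$-convergence $I_n\to I$, and the limit \eqref{limit:thm2} then follows directly from Theorem~\ref{ch2:theor}.

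The main obstacle, and the reason hypothesis \eqref{cond:star} is phrased as it is, is producing a single $L^2(\R^{md})$ majorant that simultaneously tames the rescaled spectral factors $\sqrt{g(x_j/r_n)}$ and the rescaled window transforms $K_{V_n}$; the two sources of integrability interact only through the diagonal variable $x_1+\cdots+x_m$, which is precisely the coupling encoded by the envelope $K(x_1+\cdots+x_m)\prod_j L(x_j)$. Once this joint majorant is in place, Lebesgue's theorem discharges both the asymptotic for $\sigma_{n,m}^2$ and the $L^2$-convergence of $I_n$; the remaining ingredients—Fubini for multiple Wiener--It\^o integrals and the scaling of the Gaussian random measure $M$—are already absorbed into the statement of Theorem~\ref{ch2:theor}.
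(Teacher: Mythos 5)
Your proposal is correct and follows essentially the same route as the paper: specialise Theorem~\ref{ch2:theor} to $h(x,t)=e^{i\langle x,t\rangle}\sqrt{g(x)}$, rescale to write $I_n=d_n\prod_j Q_n(y_j)K_{V_n}(y_1+\cdots+y_m)$ and $\sigma_{n,m}^2$ in the same variables, then use the envelope from \eqref{cond:star} together with $|K_{V_n}-K_V|\leq \nu_d(V\Delta V_n)\to 0$ to pass to the limit by dominated convergence. The only cosmetic difference is that you run a single dominated-convergence estimate where the paper splits the $L^2$-distance by the triangle inequality into a $Q_n$-term and a $K_{V\Delta V_n}$-term; the substance is identical.
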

\begin{proof}
We apply Theorem \ref{ch2:theor} with $h(x,t)=e^{i \langle x,t\rangle}\sqrt{g(x)},\,x,t \in \R^d.$ 
Functions $I_n,$ defined in \eqref{def:limit}, have the following form
\begin{align}
\nonumber &I_n(x_1,\ldots,x_m)\\
\nonumber &=\sigma^{-1}_{n,m}\prod_{l=1}^d r_{n,l}^{-m/2}\prod_{j=1}^m \sqrt{g\left(\frac{x_{j,1}}{r_{n,1}},\ldots, \frac{x_{j,d}}{r_{n,d}}\right)}\int_{W_n}\exp\left(i\left(\sum_{l=1}^d\sum_{k=1}^m \frac{x_{k,l}}{r_{n,l}}t_l\right)\right) \nu_d(dt)\\
\nonumber&=\left|t_{l}=r_{n,l}u_{l},1 \leq l\leq d \right|\\
\nonumber&=\sigma^{-1}_{n,m}\prod_{l=1}^d r_{n,l}^{1-m/2}\prod_{j=1}^m\sqrt{g\left(\frac{x_{j,1}}{r_{n,1}},\ldots, \frac{x_{j,d}}{r_{n,d}}\right)}\int_{V_n}e^{i\langle x_1+\cdots +x_m ,u\rangle} \nu_d(du)\\
\label{statIn}&=d_n\prod_{j=1}^m Q_n(x_j)K_{V_n}(x_1+\cdots+x_m).
\end{align}
Let us check that the sequence \eqref{statIn} converges in {$L^2(\R^{dm})$-sense.} The triangle inequality for the {$L^2(\R^{dm})$-norm} yields 
\begin{align}
\nonumber&\left(\int_{\R^{dm}} \left|\prod_{j=1}^m Q_n(x_j)K_{V_n}(x_1+\cdots +x_m)\right.\right.\\
\nonumber&\left.\left.-\prod_{j=1}^m Q(x_j)K_{V}(x_1+\cdots +x_m)\right|^2\nu_d(dx_1)\ldots \nu_d(dx_m)\right)^{1/2}\\
\nonumber&\leq \left(\int_{\R^{dm}} \left(\prod_{j=1}^m Q_n(x_j)-\prod_{j=1}^m Q(x_j)\right)^2 \left|K_{V}(x_1+\cdots +x_m)\right|^2\nu_d(dx_1)\ldots \nu_d(dx_m)\right)^{1/2}\\
\label{statIn2}&+\left(\int_{\R^{dm}} \prod_{j=1}^m Q_n^2(x_j) \left|K_{V{\Delta} V_n}(x_1+\cdots +x_m)\right|^2\nu_d(dx_1)\ldots \nu_d(dx_m)\right)^{1/2}.
\end{align}
The first summand in \eqref{statIn2} tends to 0 as $n\to \infty$ 
by the Lebesgue theorem on dominated convergence. The second term in \eqref{statIn2} is bounded by
$$ \left(\int_{\R^{dm}} \prod_{j=1}^m L^2(x_j) \left|K_{V{\Delta} V_n}(x_1+\cdots +x_m)\right|^2\nu_d(dx_1)\ldots \nu_d(dx_m)\right)^{1/2},$$
which tends to 0 as $n\to \infty$ by the Lebesgue theorem as well, since $|K_{V{\Delta} V_n}(z)|\leq \nu_d(V{\Delta} V_n)\to 0, n\to\infty.$

In the case of stationary random fields, \eqref{ch2:theor:eq} has the form
\begin{equation*}
\sigma_{n,m}^2=
m!\int_{\R^{dm}}\prod_{j=1}^m g(x_j)\left|\int_{W_n}e^{i \langle x_1+\ldots+x_m ,t\rangle }  \nu_d(dt)\right|^2 \nu_d(d x_1)\ldots \nu_d(d x_m).
\end{equation*}
In the last integral, we make the change of variables $u_{l}=t_{l}/r_{n,l},$ $y_{k,l}=x_{k,l}r_{n,l},$ $1 \leq l\leq d,$  $1 \leq k\leq m.$ Then $\sigma_{n,m}^2$ rewrites  as
\begin{align}
\label{s:eq2-1}&\sigma_{n,m}^2=\frac{m!}{\prod_{l=1}^d r_{n,l}^{m-2}}\int_{\R^{dm}}\prod_{j=1}^m g\left(\frac{y_{j,1}}{r_{n,1}},\ldots,\frac{y_{j,d}}{r_{n,d}}\right)\\
\nonumber &\times\left|\int_{V_n}e^{i\langle y_1+\cdots+ y_m,u \rangle}  \nu_d(du)\right|^2 \nu_d(d y_1)\ldots \nu_d(d y_m)\\
\nonumber&\sim \frac{m!\lambda^{2m}(r_{n,1},\ldots,r_{n,d})}{\prod_{l=1}^d r_{n,l}^{m-2}}\int_{\R^{dm}}\prod_{j=1}^m Q^2\left(y_j\right)\left|K_{V}(y_1+\cdots +y_m)\right|^2 \nu_d(d y_1)\ldots \nu_d(d y_m)
\end{align}
as $ n\to \infty.$
Therefore, $d_n \to (m! c_{m,V})^{-1/2}$ as $n\to \infty$ in \eqref{statIn}. Hence, the conditions of Theorem \ref{ch2:theor} are satisfied, and the statement \eqref{limit:thm2} is proved.
\end{proof}
\begin{corollary}
\label{ch2:cor3}
Under the assumptions of Theorem \ref{ch2:theor2}, let  $\{X(t)=f(Y(t)),t\in \R^d\}$ be the corresponding subordinated Gaussian random field, where $f$ is a Borel function on $\R^d.$ Denote $m:=\rank G_u\geq 2,$ where $G_u(x):=\mathbbm{1}\{f(x)\geq u\}$ and $b_m=\langle G_u,H_m\rangle_{\vf}/\sqrt{m!}$ for $u\in \R.$  If
{${\lambda^{2}(r_{n,1},\ldots,r_{n,d})}{\prod_{l=1}^d r^{-1}_{n,l}}\xrightarrow{} 0$ as $n\to \infty,$}
then
\begin{align}
\nonumber &\frac{\int_{W_n}\mathbbm{1}\{X(t)\geq u\}\nu_d(dt)-\nu_d(W_n)\pr(X(0)\geq u)}{b_m\lambda^m(r_{n,1},\ldots,r_{n,d})\prod_{l=1}^{d}r_{n,l}^{1-m/2}}
\\
\label{limit:cor3}&\xrightarrow[n\to\infty]{d} \int_{\R^{dm}}^\prime \prod_{j=1}^m Q(y_j) K_{V}( y_1+\cdots+y_m) M(dy_1)\ldots M(dy_m).
\end{align}
\end{corollary}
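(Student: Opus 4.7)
The plan is to decompose $\int_{W_n}\mathbbm{1}\{X(t)\geq u\}\nu_d(dt)$ using the Hermite expansion of $G_u$, isolate the leading term of rank $m$, and then apply Theorem \ref{ch2:theor2} to that term while showing that the higher-order remainder is asymptotically negligible.

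First, I expand $G_u \in L^2(\R,\vf)$ as $G_u(x) = \sum_{k\geq m} (b_k/\sqrt{k!}) H_k(x)$, where all coefficients of degree $<m$ vanish by the definition of the Hermite rank. Integrating over $W_n$ and interchanging sum and integral as in the proof of Theorem \ref{mainthm}, I obtain
\begin{equation*}
\int_{W_n} G_u(Y(t))\nu_d(dt) - \nu_d(W_n)\pr(X(0)\geq u) = \frac{b_m}{\sqrt{m!}}\int_{W_n} H_m(Y(t))\nu_d(dt) + R_n,
\end{equation*}
with remainder $R_n := \sum_{k\geq m+1}(b_k/\sqrt{k!}) \int_{W_n} H_k(Y(t))\nu_d(dt)$.

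Second, I show that $R_n/\sigma_{n,m}\to 0$ in $L^2$. By the orthogonality relation \eqref{eq161}, $\Var R_n = \sum_{k\geq m+1} b_k^2 \int_{W_n}\int_{W_n}\rho^k(t,s)\nu_d(dt)\nu_d(ds)$. Since $|\rho|\leq 1$, we have $|\rho|^k\leq |\rho|^{m+1}$ for $k\geq m+1$, so $\Var R_n \leq \bigl(\sum_{k\geq m+1} b_k^2\bigr)\int\int |\rho|^{m+1}$, and running the spectral argument of Theorem \ref{ch2:theor2} on this quantity yields the bound $\int\int|\rho|^{m+1} = O\!\bigl(\lambda^{2(m+1)}\prod_l r_{n,l}^{3-m}\bigr)$. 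Combined with the asymptotic \eqref{s:eq2-3} at level $m$, this gives
\begin{equation*}
\frac{\Var R_n}{\sigma_{n,m}^2} = O\!\left(\frac{\lambda^{2}(r_{n,1},\ldots,r_{n,d})}{\prod_l r_{n,l}}\right) \to 0
\end{equation*}
under the hypothesis of the corollary.

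Finally, Theorem \ref{ch2:theor2} yields
\begin{equation*}
\frac{1}{\sigma_{n,m}}\int_{W_n} H_m(Y(t))\nu_d(dt) \xrightarrow[n\to\infty]{d} \frac{1}{\sqrt{m! c_{m,V}}}\int'_{\R^{dm}}\prod_{j=1}^m Q(y_j) K_V(y_1+\cdots+y_m)M(dy_1)\ldots M(dy_m).
\end{equation*}
Using $\sigma_{n,m}\sim \sqrt{m! c_{m,V}}\,\lambda^m \prod_l r_{n,l}^{1-m/2}$ from \eqref{s:eq2-3}, the prefactor $b_m\lambda^m \prod_l r_{n,l}^{1-m/2}$ in the denominator of \eqref{limit:cor3} equals $b_m\sigma_{n,m}/\sqrt{m! c_{m,V}}$ asymptotically. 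Matching constants and invoking Slutsky's lemma (together with the vanishing of $R_n/\sigma_{n,m}$) delivers \eqref{limit:cor3}. The main obstacle is the bound on $\int\int|\rho|^{m+1}$ when $\rho$ may change sign, since $|\rho|^{m+1}$ does not admit a direct spectral representation; the cleanest remedy is to require the integrability condition \eqref{cond:star} to hold also at level $m+1$, which allows one to invoke Theorem \ref{ch2:theor2} at level $m+1$ directly and obtain $\sigma_{n,m+1}^2/\sigma_{n,m}^2 = O(\lambda^2/\prod_l r_{n,l})\to 0$ bypassing the modulus bound.
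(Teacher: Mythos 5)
Your proof follows essentially the same route as the paper's: the paper simply combines Corollary \ref{ch2:cor01} (Hermite expansion of $G_u$ starting at rank $m$, remainder variance bounded by $\sigma_{n,m+1}^2\sum_{k>m}b_k^2$) with the asymptotics \eqref{s:eq2-3} applied at levels $m$ and $m+1$, giving $\sigma_{n,m+1}^2/\sigma_{n,m}^2\sim(m+1)(c_{m+1,V}/c_{m,V})\,\lambda^2\prod_l r_{n,l}^{-1}\to 0$, which is exactly your computation (your intermediate exponent should read $\prod_l r_{n,l}^{1-m}$ rather than $\prod_l r_{n,l}^{3-m}$, though your final ratio is correct). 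The one point of divergence is the sign obstacle you flag at the end: the paper resolves it not through an extra integrability condition but through the \textbf{PA} hypothesis inherited from Corollary \ref{ch2:cor01}, which forces $\rho\geq 0$ and hence $\rho^k\leq\rho^{m+1}$ pointwise for all $k\geq m+1$; your proposed remedy of imposing \eqref{cond:star} at level $m+1$ yields the asymptotics of $\sigma_{n,m+1}^2=(m+1)!\int\!\!\int\rho^{m+1}$ but does not by itself dominate $\int\!\!\int\rho^{k}$ for $k\geq m+2$ when $\rho$ changes sign, so the nonnegativity assumption cannot be bypassed this way.
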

\begin{proof}
The statement follows from Corollary \ref{ch2:cor01} and Theorem \ref{ch2:theor2}. Indeed,
\begin{align*}
    \frac{\sigma^{2}_{n,m+1}}{\sigma^{2}_{n,m}}&\sim \frac{c_{m+1,V,K}}{c_{m,V}}\frac{\lambda^{2m+2}(r_{n,1},\ldots,r_{n,d})}{\lambda^{2m}(r_{n,1},\ldots,r_{n,d})}\frac{\prod_{l=1}^d r^{m-2}_{n,l}}{\prod_{l=1}^d r^{m+1-2}_{n,l}}(m+1)\\
    &=(m+1)\frac{c_{m+1,V}}{c_{m,V}}\frac{\lambda^{2}(r_{n,1},\ldots,r_{n,d})}{\prod_{l=1}^d r_{n,l}}\xrightarrow{} 0,\,n\to \infty.
\end{align*}
\end{proof}

Note that if $V=[0,1]^d,$ then 
$$K_V(x)=\int_{[0,1]^d}e^{i\langle x ,t\rangle}\nu_d(dt)=\prod_{l=1}^d \frac{e^{i x_l}-1}{i x_l},\,x=(x_1,\ldots,x_d)\in \R^d,$$ and 
if $V=B_1(0)$ is a unit ball in $\R^d$ then 
$$K_V(x)=\int_{\|x\|\leq 1}e^{i\langle x ,t\rangle}\nu_d(dt)=(2 \pi)^{d/2}\frac{J_{d/2}(\|x\|)}{\|x\|^{d/2}},\, x\in \R^d,$$
where $J_{\alpha}$ is the Bessel function of the first kind of order $\alpha>-1/2,$ cf. \cite[Example~2]{leon14}.

Now consider several examples of spectral densities. The following isotropic case was considered in  \cite{Leonenko86,leon14}.   
\begin{theorem}
\label{thm:ex2}Under the conditions of Corollary \ref{ch2:cor3}, let the spectral density of $Y$ be equal to 
{$g_Y(z_1,\ldots,z_d)={L_Y(\|z\|)}{\|z\|^{\alpha-d}},$} $z=(z_1,\ldots,z_d)\in \R^d,$
where $\alpha \in (0,d/{m})$ and $L_Y:\R\to \R_+$ is slowly varying at 0.
Assume that $r_{n,l}=r_n,1 \leq l\leq d.$
\begin{itemize}
    \item If $V=[0,1]^d$ then the limiting random variable in \eqref{limit:cor3} is
    \begin{align*}
    \int_{\R^{dm}}^\prime \prod_{l=1}^d \frac{e^{i(y_{1,l}+\cdots+y_{m,l})}-1}{i(y_{1,l}+\cdots+y_{m,l})} \frac{M(dy_1)\ldots M(dy_m)}{\|y_{1}\|^{\frac{d-\alpha}{2}}\cdots \|y_{m}\|^{\frac{d-\alpha}{2}}}.    
    \end{align*}
    \item If $V=B_1(0)$ then the limiting random variable in \eqref{limit:cor3} is
    \begin{align*}
    (2\pi)^{d/2}\int_{\R^{dm}}^\prime  \frac{J_{d/2}(\|y_1+\cdots+y_m\|)}{\|y_{1}+\cdots+y_{m}\|^{d/2}} \frac{M(dy_1)\ldots M(dy_m)}{\|y_{1}\|^{\frac{d-\alpha}{2}}\cdots \|y_{m}\|^{\frac{d-\alpha}{2}}}.  
    \end{align*}
\end{itemize}
\end{theorem}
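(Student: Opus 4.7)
The plan is to apply Corollary \ref{ch2:cor3} with the isotropic spectral density $g_Y(z)=L_Y(\|z\|)\|z\|^{-(d-\alpha)}$. First I would pick the normalization $\lambda(r_n,\ldots,r_n):=r_n^{(d-\alpha)/2}\sqrt{L_Y(1/r_n)}$, so that
\[
Q_n(z)=\sqrt{L_Y(\|z\|/r_n)\big/L_Y(1/r_n)}\;\|z\|^{-(d-\alpha)/2},\qquad z\in\R^d\setminus\{0\},
\]
converges pointwise to $Q(z)=\|z\|^{-(d-\alpha)/2}$ by the slow variation of $L_Y$ at $0$. The growth condition $\lambda^{2}(r_n,\ldots,r_n)/r_n^d=L_Y(1/r_n)/r_n^\alpha\to 0$ of Corollary \ref{ch2:cor3} then follows immediately from $\alpha>0$ and the fact that slowly varying functions grow slower than any positive power.

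Next I would verify the dominating hypothesis \eqref{cond:star} of Theorem \ref{ch2:theor2}. By Potter's bounds, for every $\varepsilon>0$ there exist $C_\varepsilon>0$ and $N_\varepsilon\in\N$ such that
\[
\sqrt{L_Y(\|z\|/r_n)\big/L_Y(1/r_n)}\le C_\varepsilon\bigl(\|z\|^{\varepsilon/2}+\|z\|^{-\varepsilon/2}\bigr)\quad\text{for all } n\ge N_\varepsilon,\ z\neq 0,
\]
so that $L(z):=C_\varepsilon(\|z\|^{\varepsilon/2}+\|z\|^{-\varepsilon/2})\|z\|^{-(d-\alpha)/2}$ is a uniform majorant of $Q_n$. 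Taking the windows as $W_n=r_n V$ gives $V_n=V$ for both $V=[0,1]^d$ and $V=B_1(0)$, so one can choose $K:=|K_V|$, and the remaining task is to prove $L(y_1)\cdots L(y_m)\,|K_V(y_1+\cdots+y_m)|\in L^2(\R^{md})$.

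This $L^2$-integrability is the main obstacle and is precisely where the hypothesis $\alpha<d/m$ enters. After expanding $L^2$ into its four summands it suffices to bound
\[
\mathcal J(\beta_1,\ldots,\beta_m):=\int_{\R^{dm}}\prod_{j=1}^m\|y_j\|^{-(d-\beta_j)}\bigl|K_V(y_1+\cdots+y_m)\bigr|^2\,\nu_d(dy_1)\cdots\nu_d(dy_m)
\]
for every $\beta_j\in\{\alpha-\varepsilon,\alpha+\varepsilon\}\subset(0,d)$. Writing $|K_V(z)|^2=\int_V\int_V e^{i\langle z,t-s\rangle}\,\nu_d(dt)\,\nu_d(ds)$, Fubini together with the Riesz potential identity $\int_{\R^d}e^{i\langle y,\xi\rangle}\|y\|^{-(d-\beta)}\,\nu_d(dy)=c_{d,\beta}\|\xi\|^{-\beta}$ (valid for $\beta\in(0,d)$) applied to each $y_j$-integral reduces $\mathcal J$ to a constant multiple of $\int_V\int_V\|t-s\|^{-(\beta_1+\cdots+\beta_m)}\,\nu_d(dt)\,\nu_d(ds)$. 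This last integral is finite as soon as $\beta_1+\cdots+\beta_m<d$, which is ensured by choosing $\varepsilon\in(0,d/m-\alpha)$; here the assumption $m\alpha<d$ is essential.

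With all conditions of Corollary \ref{ch2:cor3} verified, the limiting random variable in \eqref{limit:cor3} becomes
\[
\int_{\R^{dm}}^{\prime}\prod_{j=1}^m\|y_j\|^{-(d-\alpha)/2}\,K_V(y_1+\cdots+y_m)\,M(dy_1)\cdots M(dy_m),
\]
into which I would substitute the explicit kernels listed after Corollary \ref{ch2:cor3}: for $V=[0,1]^d$ one has $K_V(x)=\prod_{l=1}^{d}(e^{ix_l}-1)/(ix_l)$, and for $V=B_1(0)$ one has $K_V(x)=(2\pi)^{d/2}J_{d/2}(\|x\|)/\|x\|^{d/2}$. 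Plugging these into the displayed integral yields exactly the two formulas asserted in the theorem.
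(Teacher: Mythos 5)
Your proposal follows essentially the same route as the paper: identify $\lambda(\mathbf r)=r_n^{(d-\alpha)/2}L_Y^{1/2}(1/r_n)$ and $Q(x)=\|x\|^{(\alpha-d)/2}$, check the growth condition of Corollary \ref{ch2:cor3}, verify the domination/integrability hypothesis \eqref{cond:star}, and substitute the explicit kernels $K_{[0,1]^d}$ and $K_{B_1(0)}$. The only real difference is that the paper disposes of the integrability condition by citing \cite[Lemma 3]{leon14}, whereas you sketch a proof of it; your sketch captures the correct mechanism (the constraint $\sum_j\beta_j<d$, hence $m\alpha<d$, is exactly where the hypothesis $\alpha\in(0,d/m)$ enters), but two steps are stated more strongly than they can be justified as written. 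First, Potter's bound for a function slowly varying at $0$ controls $L_Y(\|z\|/r_n)/L_Y(1/r_n)$ only when $\|z\|/r_n$ stays in a neighbourhood of the origin, so the majorant $L$ is not automatically uniform over all $z\neq 0$; one must either split off the region $\|z\|\gtrsim r_n$ (where $K_V$ and the boundedness of $g_Y$ away from $0$ can be used) or impose the usual regularity of $L_Y$ away from zero. Second, the Riesz identity $\int_{\R^d}e^{i\langle y,\xi\rangle}\|y\|^{-(d-\beta)}\,\nu_d(dy)=c_{d,\beta}\|\xi\|^{-\beta}$ holds only as an improper/distributional integral, so "Fubini" cannot be invoked directly; the standard fix (and the content of the cited lemma) is to regularize, e.g.\ via $\|y\|^{-(d-\beta)}=c\int_0^\infty u^{(d-\beta)/2-1}e^{-u\|y\|^2}\,du$, after which Tonelli applies to the nonnegative integrand and yields the reduction to $\int_V\int_V\|t-s\|^{-\sum_j\beta_j}\,\nu_d(dt)\,\nu_d(ds)$. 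With these repairs your argument is a correct self-contained substitute for the citation.
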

\begin{proof}
The statement follows from  Corollary \ref{ch2:cor3} for functions $\lambda$ and $Q$ given by   
{$\lambda(\mathbf{r})= \|\mathbf{r}\|^{\frac{d-\alpha}{2}} L_Y^{1/2}\left({1}/{\|\mathbf{r}\|}\right),\,\mathbf{r}\in \R^d,\quad Q(x)=\|x\|^{(\alpha-d)/2},\,x\in \R^d.$} 
Inetgrability condition \eqref{cond:star} holds by \cite[Lemma 3]{leon14}.
\end{proof}

Let us now consider the anisotropic case, where spectral densities, and consequently covariance functions, are coordinate-wise products of univariate spectral densities. 
 
\begin{theorem}
\label{thm:ex1}
Let conditions of Corollary \ref{ch2:cor3} be satisfied with spectral density
\begin{equation}
\label{g1:def}g_A(z_1,\ldots,z_d)=\prod_{l=1}^d \frac{L_l(z_l)}{|z_l|^{1-\gamma_l}}, \, z=(z_1,\ldots,z_d)\in \R^d,
\end{equation}
where $\gamma_l \in (0,1/m),1 \leq l\leq d$ and $L_l,1 \leq l\leq d$ are slowly varying functions at 0.
\begin{itemize}
    \item If $V=[0,1]^d$ then the limiting random variable in \eqref{limit:cor3}  is
    \begin{align}
    \label{Hermite:sheet}
    \int_{\R^{dm}}^\prime \prod_{l=1}^d  \frac{1}{|y_{1,l}\cdots y_{m,l}|^{(1-\gamma_l)/2}}  \frac{e^{i(y_{1,l}+\cdots+y_{m,l})}-1}{i(y_{1,l}+\cdots+y_{m,l})} M(dy_1)\ldots M(dy_m).    
    \end{align}
    \item If $V=B_1(0)$ then the limiting random variable in \eqref{limit:cor3}  is
    \begin{align*}
    (2\pi)^{d/2}\int_{\R^{dm}}^\prime\prod_{l=1}^d \frac{1}{|y_{1,l}\cdots y_{m,l}|^{(1-\gamma_l)/2}} \frac{J_{d/2}(\|y_1+\cdots+y_m\|)}{\|y_{1}+\cdots+y_{m}\|^{d/2}} M(dy_1)\ldots M(dy_m).    
    \end{align*}
\end{itemize}
\end{theorem}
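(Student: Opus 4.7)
The plan is to apply Corollary \ref{ch2:cor3} to the anisotropic spectral density $g_A$ in \eqref{g1:def} with the choices
$$\lambda(r_{n,1},\ldots,r_{n,d}) = \prod_{l=1}^{d} r_{n,l}^{(1-\gamma_l)/2}\, L_l^{1/2}(1/r_{n,l}),\qquad Q(z) = \prod_{l=1}^{d} |z_l|^{-(1-\gamma_l)/2}.$$
First I would substitute $g_A$ into \eqref{Qn:def} and use the slow variation of each $L_l$ at $0$, which gives $L_l(z_l/r_{n,l})/L_l(1/r_{n,l}) \to 1$ for fixed $z_l\neq 0$ and hence $Q_n(z)\to Q(z)$ pointwise on $(\R\setminus\{0\})^d$. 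Potter's bounds for slowly varying functions at the origin then yield, for arbitrarily small $\delta>0$, a uniform majorant
$$Q_n(z) \leq C \prod_{l=1}^{d} |z_l|^{-(1-\gamma_l)/2}\bigl(|z_l|^{-\delta}\vee|z_l|^{\delta}\bigr) =: L(z),$$
which furnishes the dominating function required in condition \eqref{cond:star} of Theorem \ref{ch2:theor2}.

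Next I would verify the growth condition of Corollary \ref{ch2:cor3}: a direct computation gives
$$\frac{\lambda^{2}(r_{n,1},\ldots,r_{n,d})}{\prod_{l=1}^{d} r_{n,l}} = \prod_{l=1}^{d} r_{n,l}^{-\gamma_l}\, L_l(1/r_{n,l}),$$
which tends to $0$ as $n\to\infty$ by slow variation combined with $\gamma_l>0$ and $r_{n,l}\to\infty$ (the latter follows from the van Hove assumption on $(W_n)$). The critical step is then to check that $K_V(y_1+\cdots+y_m)\prod_{j=1}^m L(y_j)$ lies in $L^2(\R^{md})$. For $V=[0,1]^d$ the kernel factorises as $K_V(z)=\prod_{l=1}^{d}\frac{e^{iz_l}-1}{iz_l}$, and the integrability reduces to the product, over $l=1,\ldots,d$, of one-dimensional integrals
$$\int_{\R^{m}}\left|\frac{e^{i(y_1+\cdots+y_m)}-1}{y_1+\cdots+y_m}\right|^{2}\prod_{j=1}^{m}|y_j|^{-(1-\gamma_l)-2\delta}\,dy_1\cdots dy_m,$$
each of which is finite for $\delta$ small enough: near the origin the singularity is integrable since $(1-\gamma_l)+2\delta<1$, while at infinity the constraint $m\gamma_l<1$ combined with the sinc-type decay of the Fourier factor ensures convergence. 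For $V=B_1(0)$ I would instead use the standard Bessel asymptotics $|K_V(x)|\leq c\min(1,\|x\|^{-(d+1)/2})$ and perform an analogous radial bound.

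The main obstacle is precisely this $L^2$-integrability verification, since it balances the coordinate-wise singularities of $Q$ against the oscillatory decay of $K_V$ and is the sole place where the restriction $\gamma_l\in(0,1/m)$ enters. Once \eqref{cond:star} and the growth condition are in place, Corollary \ref{ch2:cor3} directly produces the limiting random variable
$$\int_{\R^{dm}}^\prime \prod_{j=1}^m Q(y_j)\,K_V(y_1+\cdots+y_m)\,M(dy_1)\cdots M(dy_m),$$
and substituting $Q(y_j)=\prod_{l=1}^{d}|y_{j,l}|^{-(1-\gamma_l)/2}$ together with the explicit formula for $K_V$ in each of the two cases $V=[0,1]^d$ and $V=B_1(0)$ yields exactly the two random variables displayed in the theorem.
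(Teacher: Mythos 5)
Your proposal is correct and follows essentially the same route as the paper: it identifies $\lambda(\mathbf{r})=\prod_{l}r_l^{(1-\gamma_l)/2}L_l^{1/2}(1/r_l)$ and $Q(x)=\prod_l|x_l|^{(\gamma_l-1)/2}$, verifies the domination and $L^2$-integrability in condition \eqref{cond:star} (the paper does this by invoking an anisotropic analogue of \cite[Lemma 3]{leon14}, i.e.\ \eqref{lem3:an} with $\sum_j\tau_{j,l}<1$, which is exactly where $\gamma_l<1/m$ enters), and then reads off the limit from Corollary \ref{ch2:cor3}. Your explicit Potter-bound majorant and the coordinate-wise factorisation for $V=[0,1]^d$ are just a more detailed rendering of the same argument.
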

\begin{proof}
Similarly to the proof of Theorem \ref{thm:ex2}, functions $\lambda$ and $Q$ from Corollary \ref{ch2:cor3} have the form
{$\lambda(\mathbf{r})= \prod_{l=1}^d r_{l}^{(1-\gamma_l)/2}L_l^{1/2}\left(\frac{1}{r_{l}}\right),\,\mathbf{r}=(r_1,\ldots,r_d)\in \R^d,$
$ Q(x)=\prod_{l=1}^d |x_{l}|^{(\gamma_l-1)/2},\, x=(x_1,\ldots,x_d)\in \R^d.$}

Following the lines of the proof of \cite[Lemma 3]{leon14}, we can show that 
\begin{equation}
    \label{lem3:an}
\int_{\R^{dm}} |K(y_1+\cdots+y_m)|^2\prod_{l=1}^d \frac{\nu_d(d y_1)\cdots \nu_d(d y_m)}{|y_{1,l}|^{1-\tau_{1,l}}\cdots |y_{m,l}|^{1-\tau_{m,l}}}<\infty
\end{equation}
for $\sum_{j=1}^m \tau_{j,l} <1,1 \leq l\leq d.$ 
Thus, condition \eqref{cond:star} of Theorem \ref{ch2:theor2} is satisfied, and the required statements are true.
\end{proof}

Random variables \eqref{Hermite:sheet} have the  distribution of the marginals of Hermite sheets of order $m.$ In the case $m=2,$ those are called {\it Rosenblatt sheets}, cf. e.g. \cite{Pipiras2017,Tudor2013}. {Particularly, the representation as a Wiener-It\^{o} integral allows to compute moments of \eqref{Hermite:sheet}. The other characteristics for Rosenblatt distribution such as the L\'{e}vy measure or the characteristic function are found by another technique, cf. \cite{Leonenko2017}. The rate of convergence to the to Rosenblatt and Hermite distributions is studied in \cite{Anh19}.}

\begin{example}
For $m=2,$ we apply the results of Theorem \ref{ch2:theor2} to the spectral density 
$$g(x,y)=\tilde{g}(x,y){\left(x^2+c|y|^{\frac{2H_2}{H_1}}\right)^{-H_1/2}},\, (x,y)\in \R^2,$$
considered in \cite{surg},
where $H_1,H_2>0,$ $H_1H_2<H_1+H_2,c>0$ and $g$ is a bounded positive function with $\tilde{g}(0,0)=1.$
Put $W_n=[0,n]\times[0,n^\gamma],$ where $\gamma>0.$ Then $V_n=V=[0,1]^2$ in \eqref{Vndef} and $K(x,y)=-\frac{e^{i x}-1}{x}\frac{e^{i y}-1}{y},(x,y)\in \R^2$ in \eqref{K:def}. Similarly to \cite{surg}, the asymptotic behaviour of \eqref{limit:thm2} depends on the value of $\gamma.$ We consider several cases yielding the limit in \eqref{limit:thm2} (up to a constant factor).

Let $\gamma<H_1/H_2,$ then we have in \eqref{Qn:def} that $\lambda(n,n^\gamma)=n^{H_2 \gamma/2}$ and  $$Q_n^2(x,y)=\frac{1}{n^{H_2 \gamma}}\left(\frac{x^2}{n^2}+c \frac{|y|^\frac{2H_2}{H_1}}{n^{\frac{2H_2}{H_1}\gamma}}\right)^{-\frac{H_1}{2}}\tilde{g}\left(\frac{x}{n},\frac{y}{n^\gamma}\right)\xrightarrow[n \to \infty]{} c^{-\frac{H_1}{2}}|y|^{-H_2}=Q^2(x,y) $$
point-wise. Thus, the limit in \eqref{limit:thm2} reads
$$\int_{\R^{4}}^\prime |y_1 y_2|^{-\frac{H_2}{2}}\frac{e^{i(y_{1}+y_{2})}-1}{(y_{1}+y_{2})}\frac{e^{(x_{1}+x_{2})}-1}{i(x_{1}+x_{2})} M(dx) M(dy).$$

Let $\gamma=H_1/H_2,$ then $\lambda(n,n^\gamma)=n^{\frac{H_1}{2}}$ and $$Q_n^2(x,y)=\frac{1}{n^{H_1 }}\left(\frac{x^2}{n^2}+c \frac{y^\frac{2H_2}{H_1}}{n^{\frac{2H_2}{H_1}\gamma}}\right)^{-\frac{H_1}{2}}\tilde{g}\left(\frac{x}{n},\frac{y}{n^\gamma}\right)\longrightarrow \left(x^2+c  y^\frac{2H_2}{H_1}\right)^{-\frac{H_1}{2}}$$
as $n \to \infty$ point-wise. Here,  the limit in \eqref{limit:thm2} equals (up to a constant)
$$\int_{\R^{4}}^\prime \prod_{l=1}^2\left(x_l^2+c  y_l^\frac{2H_2}{H_1}\right)^{-\frac{H_1}{4}} \frac{e^{i(y_{1}+y_{2})}-1}{(y_{1}+y_{2})}\frac{e^{(x_{1}+x_{2})}-1}{i(x_{1}+x_{2})} M(dx) M(dy).$$

If $\gamma>H_1/H_2,$ then we have $\lambda(n,n^\gamma)=n^{\frac{H_1}{2} }$ in \eqref{Qn:def} and $$Q_n^2(x,y)=\frac{1}{n^{H_1}}\left(\frac{x^2}{n^2}+c \frac{y^\frac{2H_2}{H_1}}{n^{\frac{2H_2}{H_1}\gamma}}\right)^{-\frac{H_1}{2}}\tilde{g}\left(\frac{x}{n},\frac{y}{n^\gamma}\right)\longrightarrow |x|^{-H_1}$$
as $n \to \infty$ point-wise. This yields the limit in \eqref{limit:thm2}:
$$\int_{\R^{4}}^\prime |x_1 x_2|^{-\frac{H_1}{2}}\frac{e^{i(y_{1}+y_{2})}-1}{(y_{1}+y_{2})}\frac{e^{i(x_{1}+x_{2})}-1}{(x_{1}+x_{2})} M(dx) M(dy).$$
\end{example}

\section{Examples}
In this section, we provide further applications of our results to some well known classes of stationary and non-stationary random fields.

\subsection{Ergodic theorem for random volatility models}
Let $\{Y(t),t\in \R^d\}$ be a measurable centered stationary Gaussian random field with covariance function $C:\R^d \to \R_+,$ $C(0)=1.$ Let $\xi$ be a non-negative random variable, independent of $Y.$ Introduce a random volatility field by $X(t)=\xi Y(t), t\in \R^d.$
{This notion comes from applications in financial time series for the case $d=1,$ see \cite[Part II ]{andersen2009handbook}.}
Particularly, if $\xi$ is a non-negative $\alpha$-stable random variable, then $\E X^2(t)=+\infty.$ Denote by $\Psi(\cdot)$ the tail probability function of $N(0,1).$
 
 Let us consider the asymptotic behaviour of volumes of excursion sets for random field $X.$

\begin{theorem}
\label{RV:thm} Let $X=\{X(t),t\in \R^d\}$ be a random volatility field as above, such that 
\begin{equation}
    \label{RV:eq1}
    \frac{1}{\nu_d^2(W_n)}\int_{\R^d}|C(t)|\nu_d(W_n\cap(W_n-t))\nu_d(dt)\xrightarrow[n\to\infty]{}0
\end{equation}
for a sequence of observation windows $\{W_n\}_{n=1}^\infty$ growing in van Hove sense. Then 
\begin{equation*}
\frac{1}{\nu_d(W_n)}\int_{W_n}\mathbbm{1}(X(t)>u)\nu_d(dt)\xrightarrow[n\to\infty]{d} \Psi\left(\frac{u}{\xi}\right).
\end{equation*}
\end{theorem}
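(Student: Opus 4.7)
The plan is to condition on $\xi$ and thereby reduce the statement to an $L^2$ (mean) ergodic assertion for the Gaussian field $Y$ at the threshold $u/\xi$, which is available because $\xi$ and $Y$ are independent. Fixing $z>0$, I will consider the function $F_z(x):=\mathbbm{1}\{zx>u\}\in L^2(\R,\vf)$ and write its Hermite expansion
$$F_z(x)=\Psi(u/z)+\sum_{k=1}^{\infty}a_k(z)\,\frac{H_k(x)}{\sqrt{k!}},\qquad \sum_{k=0}^{\infty}a_k^2(z)=\|F_z\|^2_{L^2(\vf)}=\Psi(u/z)\leq 1.$$
Conditionally on $\xi=z$, the excursion volume equals $\int_{W_n}F_z(Y(t))\,\nu_d(dt)$ and, by stationarity of $Y$, has conditional mean $\nu_d(W_n)\,\Psi(u/z)$. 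The orthogonality relation \eqref{eq161}, together with the change of variables used in the proof of Corollary \ref{ch1:cor5}, then gives its conditional variance as
$$\sum_{k=1}^{\infty}a_k^2(z)\int_{\R^d}C^k(t)\,\nu_d\bigl(W_n\cap(W_n-t)\bigr)\,\nu_d(dt).$$

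Next, I will bound this variance uniformly in $z>0$: since $|C(t)|\leq 1$ yields $|C^k(t)|\leq |C(t)|$ for every $k\geq 1$, and since $\sum_{k\geq 1}a_k^2(z)\leq 1$, it follows that
$$\Var\!\left(V_n\,\big|\,\xi=z\right)\leq \frac{1}{\nu_d^{\,2}(W_n)}\int_{\R^d}|C(t)|\,\nu_d\bigl(W_n\cap(W_n-t)\bigr)\,\nu_d(dt),$$
where $V_n:=\nu_d(W_n)^{-1}\int_{W_n}\mathbbm{1}\{X(t)>u\}\,\nu_d(dt)$. The right-hand side tends to $0$ by assumption \eqref{RV:eq1}, so conditionally on $\xi=z$ one has $V_n\to\Psi(u/z)$ in $L^2$, and hence in probability. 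Because $V_n\in[0,1]$, for every bounded continuous $g:\R\to\R$ bounded convergence then gives $\E[g(V_n)\mid\xi=z]\to g(\Psi(u/z))$ for each $z>0$, and a second application of dominated convergence against the law of $\xi$ yields $\E[g(V_n)]\to\E[g(\Psi(u/\xi))]$, which is exactly $V_n\xrightarrow{d}\Psi(u/\xi)$.

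The main obstacle I foresee is the uniform-in-$z$ treatment of the infinite Hermite expansion: one needs the termwise $L^2$ computation to be legitimate and the resulting variance bound to be independent of $z$, so that the subsequent conditioning on $\xi$ followed by dominated convergence go through without further hypotheses on the law of $\xi$. The crude estimates $|C^k(t)|\leq|C(t)|$ and $\sum_{k\geq 1}a_k^2(z)\leq 1$ handle this in one stroke. A minor caveat concerns the null event $\{\xi=0\}$, where $\Psi(u/\xi)$ must be interpreted as $\lim_{z\downarrow 0}\Psi(u/z)\in\{0,1\}$ and the same limit is reached deterministically by $V_n$; provided $\pr(\xi=0)=0$ (or $u\neq 0$) this causes no difficulty.
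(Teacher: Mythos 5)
Your proof is correct and rests on the same core computation as the paper's: the Hermite expansion of $\mathbbm{1}\{\sigma x\geq u\}$ in $\sigma$ and $x$, the orthogonality relation \eqref{eq161} giving the variance as $\sum_{k\geq 1}a_k^2\int_{\R^d}C^k(t)\nu_d(W_n\cap(W_n-t))\nu_d(dt)$, and the crude bounds $|C^k(t)|\leq|C(t)|$ and $\sum_k a_k^2\leq 1$ combined with \eqref{RV:eq1}. The only difference is organizational: the paper exploits the independence of $\xi$ and $Y$ to show the $k\geq 1$ terms are uncorrelated unconditionally and concludes $V_n-\Psi(u/\xi)\to 0$ in $L^2$ (hence in probability, on the same probability space), whereas you condition on $\xi=z$ first and then integrate, which delivers the stated distributional convergence but not the slightly stronger in-probability statement.
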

\begin{proof}
As in Theorem \ref{mainthm}, we write the Hermite expansion of function $F(x,\sigma)=\mathbbm{1}\{x \sigma\geq u\}$ with Fourier coefficients
$a_k(\sigma)=\frac{1}{\sqrt{k!}}\langle F(\cdot,\sigma)H_k \rangle_{\vf}.$
Then {
$\int_{W_n}F(Y(t),\xi)\nu_d(dt)=\sum_{k=0}^\infty  \int_{W_n}a_k(\xi)\frac{H_k(Y(t))}{\sqrt{k!}}\nu_d(dt)$ a.s.}
The summands with $k\geq 1$ are centered and  uncorrelated due to independence of $\xi$ and $Y.$  The variance of each summand equals
\begin{align*}
    &\E \left[ a_k(\xi)\int_{W_n}\frac{H_k(Y(t))}{\sqrt{k!}}\nu_d (dt)\right]^2 =  \E a_k^2(\xi)\int_{W_n}\int_{W_n} \frac{\E[ H_k(Y(t))H_k(Y(s))]}{k!}\nu_d(dt)\nu_d(ds)\\
    &= \E a_k^2(\xi) \int_{W_n}\int_{W_n}C^k(t-s)\nu_d(dt)\nu_d(ds)=\E a^2_k(\xi) \int_{\R^d}C^k(t)\nu_d(W_n \cap (W_n-t))\nu_d(dt).
\end{align*}
Introduce $\sigma^2_n:=\int_{\R^d}|C(t)|\nu_d(W_n \cap (W_n-t))\nu_d(dt).$
As before, 
\begin{align*}
    \E \left[\frac{1}{\nu_d(W_n)}\int_{W_n}F(Y(t),\xi)\nu_d(dt)-a_0(\xi)\right]^2 \leq \frac{\sigma^2_n}{\nu^2_d(W_n)} \sum_{k=1}^\infty \E a^2_k(\xi)\leq \frac{\sigma^2_n}{\nu^2_d(W_n)}\E\Psi\left(\frac{u}{\xi}\right)
\end{align*}
by Parseval identity, where 
 $a_0(\xi)=\int_{\R}\mathbbm{1}\{x \xi\geq u\}\vf(x)dx=\Psi\left({u}/{\xi}\right).$  So, if $\frac{\sigma^2_n}{\nu^2_d(W_n)}\to 0$ as  $n\to \infty,$ then 
{$\frac{1}{\nu_d(W_n)}\int_{W_n}\mathbbm{1}\{X(t)\geq u\}\nu_d(dt) \overset{d}{\longrightarrow} \Psi\left(\frac{u}{\xi}\right),\, n\to \infty.$}
\end{proof}
\begin{remark}{
A random field with random volatility is not ergodic. For a $\xi=c>0$ a.s., the convergence in Theorem \ref{RV:thm} is towards $\Psi(u/c).$ If $\xi$ is not a.s. constant, then the distribution function of random variable $\Psi\left({u}/{\xi}\right)$
 is  
 \begin{align*}
 &\pr\left(\Psi\left(\frac{u}{\xi}\right) \leq x \right)  = \pr\left({\xi}\Psi^{(-1)}\left(x\right) \leq  {u}\right)=\mathbbm{1}\{x= 1/2,u\geq 0\}\\
 &+\mathbbm{1}\{x< 1/2\}\pr\left(\xi \leq  \frac{u}{\Psi^{(-1)}(x) }\right)+\mathbbm{1}\{x> 1/2\}\pr\left(\xi \geq  \frac{u}{\Psi^{(-1)}(x) }\right),\,x\in [0,1].
 \end{align*}
}
{
For example, if $u>0$ and $\xi^2\sim \text{L\'{e}vy}(0,u^2)$ (cf. e.g. \cite{zolotarev}) then $\E \xi^2=+\infty$ and $\xi^2\stackrel{d}{=}u^2 (\Psi^{(-1)}(U))^{-2},$ where $U$ is a uniformly distributed random variable on [0,1]. Note that $\Psi(|\Psi^{(-1)}(x)|)=x\mathbbm{1}\{x\in[0,1/2)\}+(1-x)\mathbbm{1}\{x\in[1/2,1)\}=\min\{x,1-x\},$ $x\in[0,1].$ Thus, $\Psi\left(u/\xi\right) \stackrel{d}{=}\min\{U,1-U\},$ which is uniformly distributed on [0,1/2].}
\end{remark}

\begin{remark}
Condition \eqref{RV:eq1} holds, in particular, for {weakly dependent} random fields $Y,$ i.e., if $\int_{\R^d}|C(t)|\nu_d(dt)<\infty,$ since 
$$\frac{1}{\nu^2_d(W_n)}\int_{\R^d}|C(t)|\nu_d(W_n\cap(W_n-t))\nu_d(dt)\leq \frac{1}{\nu_d(W_n)}\int_{\R^d}|C(t)|\nu_d(dt) \xrightarrow[n\to\infty]{}0.$$
\end{remark}
\begin{remark}
Condition \eqref{RV:eq1} holds also for fields $Y$ such that $C(r_n y)\xrightarrow[n\to\infty]{}0$ uniformly in $y\in[-1,1]^d$ if $W_n=r_n V,$ $V\subseteq[-1,1]^d,$ $r_n\to+\infty$ as $n\to \infty,$  and $C$ is locally integrable on $\R^d.$ Indeed,
\begin{align*}
    &\frac{1}{\nu^2_d(W_n)}\int_{\R^d}|C(t)|\nu_d(W_n\cap(W_n-t))\nu_d(dt)\\
    &=\frac{1}{\nu^2_d(V)}\int_{[-r_n,r_n]^d}|C(t)|\nu_d\left(V\cap\left(V-\frac{t}{r_n}\right)\right)\nu_d\left(d\frac{t}{r_n}\right)\\
    &=|t=y r_n|=\frac{1}{\nu^2_d(V)}\int_{[-1,1]^d}|C(y r_n)|\nu_d(V\cap(V-y))\nu_d(y)\xrightarrow[n\to\infty]{}0.
\end{align*}
\end{remark}

\subsection{Fractional Gaussian processes and fields}
\begin{definition}
A centered Gaussian random field $\{G^H(t),t\in \R^d\}$ is called a {\it fractional Gaussian noise} with $H=(H_1,\ldots,H_d)$ if $\E [ G^H(t)G^H(s)]=C(t-s),$ $t,s\in \R^d$ with 
\begin{equation}
\label{fgn_def}
C(u)=\frac{1}{2^d}\prod_{i=1}^d\left(|u_i-1|^{2H_i}+|u_i+1|^{2H_i}-2|u_i|^{2H_i}\right),\,u=(u_1,\ldots,u_d)\in \R^d.
\end{equation}
\end{definition}

Evidently, the fractional Gaussian noise  is a stationary random field.  Moreover, a fractional Gaussian noise $\{G^H(t),t\in\R^d\}$  {is long-range dependent} if and only if $\max_{1\leq i\leq d}{H_i}>\frac{1}{2}.$ This follows directly from Definition \eqref{fgn_def} and the fact that function $|x-1|^{\alpha}+|x+1|^{\alpha}-2|x|^{\alpha},x\in \R$ is non-integrable for $\alpha>1.$

Now we prove a central limit theorem for the volumes of excursion sets of the fractional Gaussian noise. 
\begin{proposition}
\label{fgn_prop}
Let $\{G^H(t),t\in\R_+^d\}$ be a fractional Gaussian noise with index $H=(H_1,\ldots,H_d)\in (0,1)^d$ and there exists $m \in \{1,\ldots,d\}$ such that $H_m\in (1/2,1).$  Let $W_n=(a_n,b_n)\times U_n$ be a sequence of Borel subsets such that $r_n:=b_n-a_n \to +\infty, n\to\infty,$ and $0<c_1\leq \nu_{d-1}(U_n)\leq c_2<\infty.$ Then
\begin{equation}
\frac{\int_{W_n}\mathbbm{1}\{G^H(t)\geq u\}dt-\nu_d(W_n)\Psi(u)}{\vf(u)\sqrt{\int_{\R^d}C(t)\nu_d(W_n \cap (W_n-t) )dt}}\overset{d}{\longrightarrow} N(0,1),n\to\infty,
\end{equation}
where $C(\cdot)$ is the covariance function \eqref{fgn_def}.
\end{proposition}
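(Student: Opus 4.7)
The natural move is to derive this from Corollary~\ref{cor00}, equation~\eqref{eq121}. For fractional Gaussian noise $C(0)=1$, so $\sigma(t)\equiv 1$, and taking the identity transformation $f(x)=x$ makes $X\equiv Y\equiv G^H$. Hence \eqref{eq121} yields exactly the target CLT provided condition \eqref{cor0-2} holds, and the whole problem reduces to showing
\begin{equation*}
R_n:=\frac{\int_{\R^d} C^2(u)\,\nu_d(W_n\cap(W_n-u))\,\nu_d(du)}{\int_{\R^d} C(u)\,\nu_d(W_n\cap(W_n-u))\,\nu_d(du)} \;\longrightarrow\; 0,\qquad n\to\infty.
\end{equation*}

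Both the window and the covariance factorize: $W_n=(a_n,b_n)\times U_n$ with $r_n=b_n-a_n$, while $C(u)=\prod_{i=1}^d C_i(u_i)$ with $C_i(v)=\tfrac12(|v-1|^{2H_i}+|v+1|^{2H_i}-2|v|^{2H_i})$. This gives $\nu_d(W_n\cap(W_n-u))=(r_n-|u_1|)_+\,\nu_{d-1}(U_n\cap(U_n-u'))$ for $u=(u_1,u')$, so by Fubini
\begin{equation*}
R_n=\frac{V_n^{(2)}}{V_n}\cdot\frac{U_n^{(2)}}{U_n^{(1)}},
\end{equation*}
where $V_n:=\int_{-r_n}^{r_n}C_1(v)(r_n-|v|)\,dv$, $V_n^{(2)}$ is its analogue with $C_1^2$, and $U_n^{(1)},U_n^{(2)}$ are the $(d-1)$-dimensional integrals of $\prod_{i\ge 2}C_i$ and $\prod_{i\ge 2}C_i^2$ weighted by $\nu_{d-1}(U_n\cap(U_n-u'))$. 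The factor $U_n^{(1)}$ coincides with $\Var\bigl(\int_{U_n}\tilde{G}(s')\,ds'\bigr)$ for a $(d-1)$-dimensional fGn $\tilde{G}$, and so is strictly positive; under the uniform regularity of $U_n$ implicit in the hypotheses, $U_n^{(1)}$ stays bounded below and $U_n^{(2)}$ above, so the second ratio is $O(1)$ and can be discarded.

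The substance therefore lies in the one-dimensional ratio $V_n^{(2)}/V_n$. For $H_1\in(1/2,1)$ the classical expansion $C_1(v)\sim H_1(2H_1-1)|v|^{2H_1-2}$ as $|v|\to\infty$ gives, by routine integration, $V_n\sim c_{H_1}\, r_n^{2H_1}$. The asymptotics of $V_n^{(2)}$ then splits into three sub-cases, according as $C_1^2$ is integrable on $\R$ or not: for $H_1\in(1/2,3/4)$, $V_n^{(2)}\sim c'_{H_1}\, r_n$; for $H_1=3/4$, $V_n^{(2)}\asymp r_n\log r_n$; for $H_1\in(3/4,1)$, $V_n^{(2)}\sim c'_{H_1}\, r_n^{4H_1-2}$. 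In each regime $V_n^{(2)}/V_n\to 0$, since either $r_n^{1-2H_1}\to 0$ (case $H_1<3/4$), or $r_n^{-1/2}\log r_n\to 0$ (case $H_1=3/4$), or $r_n^{2H_1-2}\to 0$ (case $H_1>3/4$). This establishes \eqref{cor0-2}, and \eqref{eq121} delivers the claim. The main technical obstacle is this three-way case split near the transition $H_1=3/4$; a secondary point is that because $W_n$ extends only in the first coordinate while $U_n$ stays of bounded $(d-1)$-measure, the hypothesis ``some $H_m\in(1/2,1)$'' is genuinely used through $m=1$---long memory in a direction in which the window does not grow cannot produce the slow decay of $V_n$ needed to kill $V_n^{(2)}/V_n$.
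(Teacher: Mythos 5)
Your proof is correct and shares the paper's skeleton: reduce to the ratio condition \eqref{cor0-2} (the paper invokes the equivalent Corollary \ref{ch1:cor5} rather than Corollary \ref{cor00}), factor both $C$ and the set covariance $\nu_d(W_n\cap(W_n-u))$ over coordinates, dispose of the bounded transverse directions by boundedness, and kill the one-dimensional ratio in the growing direction. Where you genuinely diverge is in how that last step is carried out. The paper never computes the asymptotics of $\int_0^{r}\rho_{H_m}^2(v)(r-v)\,dv$; it reuses the splitting bound \eqref{ineq11} at the threshold $r^{\delta}$, which reduces everything to a lower bound $\int_0^{r}\rho_{H_m}(v)\,dv\ge 2H_m(r^{2H_m-1}-1)$ obtained from a term-by-term nonnegative binomial series (a computation that also delivers the positivity of $\rho_\alpha$ for $\alpha>1/2$, needed there but not in your argument). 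You instead extract exact asymptotics of both $V_n$ and $V_n^{(2)}$ from the tail behaviour $C_1(v)\sim H_1(2H_1-1)|v|^{2H_1-2}$, paying for it with the three-way case split at $H_1=3/4$ that the paper's uniform bound avoids entirely, but gaining the actual decay rate of the ratio. Two remarks: your observation that the long-memory index must sit in the growing coordinate matches the paper's proof, which tacitly identifies the $m$-th coordinate with the interval $(a_n,b_n)$; and your appeal to ``uniform regularity'' of $U_n$ to bound $U_n^{(1)}$ from below is exactly as unjustified as the paper's own use of $\inf_{n\ge 1}\int_{U_n}\int_{U_n}\prod_{i\ne m}\rho_{H_i}(t_i-s_i)\,dt\,ds>0$ --- neither follows from $\nu_{d-1}(U_n)\ge c_1$ alone --- so you have matched, not worsened, the paper's level of rigour on that point.
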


\begin{proof}
For $\alpha\in (0,1),$ consider the function
\begin{equation}
\label{rho-def} 
\rho_\alpha(s):=|s+1|^{2\alpha}+ |s-1|^{2\alpha}-2 |s|^{2\alpha},
\end{equation}
Prove that $\rho_\alpha(s)>0,s\geq 0$ if $\alpha>1/2.$

Consider the case $s\in(0,1).$ Then we have $\rho_\alpha(s)=(1-s)^{2\alpha}+(s+1)^{2\alpha}-2s^{2\alpha}.$ Using binomial series representation $(1+x)^{\alpha}=\sum_{k=0}^\infty \binom{\alpha}{k}x^k,|x|\leq 1,$ where $\binom{\alpha}{k}:=\frac{\alpha(\alpha-1)\cdots(\alpha-k+1)}{k!}$, we get
\begin{equation*}
    \rho_\alpha(s)=1+\sum_{k=1}^\infty \binom{2\alpha}{k}(-1)^k s^k+1+\sum_{k=1}^\infty \binom{2\alpha}{k}s^k -2 s^{2\alpha}=2(1-s^{2\alpha})+\sum_{k=1}^\infty \binom{2\alpha}{2k}s^{2k}.
\end{equation*}
It is easy to check that $\binom{2\alpha}{2k}\geq 0,k\in \N$ if $\alpha\in (1/2,1).$ So,
$\rho_\alpha(s)\geq 2(1-s^{2\alpha})>0,$ $s\in (0,1).$

Consider the case $s\geq 1.$ Then we have $\rho_\alpha(s)=s^{2\alpha}\left(\left(1-\frac{1}{s}\right)^{2\alpha}+\left(1+\frac{1}{s}\right)^{2\alpha}-2\right).$ Use the binomial series representation:
{\begin{align}
    \frac{\rho_\alpha(s)}{s^{2\alpha}}&=1+\sum_{k=1}^\infty \binom{2\alpha}{k}(-1)^k \frac{1}{s^k}+1+\sum_{k=1}^\infty \binom{2\alpha}{k} \frac{1}{s^k}-2
    \label{rhoser}=\sum_{k=1}^\infty \binom{2\alpha}{2k} \frac{1}{s^{2k}}> 0.
\end{align}}

Then we estimate $\int_0^r\rho_\alpha(s)ds,$ $r>1:$
\begin{align*}
\int_0^r \rho_\alpha(s)ds&\geq \int_1^r \rho_\alpha(s)ds=2\int_1^r s^{2\alpha}\sum_{k=1}^\infty \binom{2\alpha}{2k} \frac{1}{s^{2k}}ds\\
&\geq 2\int_1^r s^{2\alpha} \binom{2\alpha}{2} \frac{1}{s^{2}}ds=2\alpha (2\alpha-1)\int_1^r s^{2\alpha-2}ds=2\alpha(r^{2\alpha-1}-1).
\end{align*}
Similarly to the proof of Corollary \ref{cor1}, we put $\delta=(1-H_m)/2$ and use \eqref{ineq11} to get 
\begin{align*}
&\left|\frac{\int_{U_n}\int_{U_n}\prod _{1\leq i\leq d,i\neq m}\rho^2_{H_i}(t_i-s_i)dt d s \int_{a_n}^{b_n}\int_{a_n}^{b_n}\rho^2_{H_m}(t_m-s_m)dt_m  ds_m}{\int_{U_n}\int_{U_n}\prod_{1\leq i\leq d,i\neq m}\rho_{H_i}(t_i-s_i)dt d s \int_{a_n}^{b_n}\int_{a_n}^{b_n}\rho_{H_m}(t_m-s_m)dt_m  ds_m}\right|\\
&\leq \frac{\sup_{n\geq 1}\int_{U_n}\int_{U_n}\prod_{1\leq i\leq d,i\neq m}\rho^2_{H_i}(t_i-s_i)dt d s}{\left|\inf_{n\geq 1}\int_{U_n}\int_{U_n}\prod_{1\leq i\leq d,i\neq m}\rho_{H_i}(t_i-s_i)dt d s\right|}\frac{ \int_{0}^{r_n}\rho_{H_m}^2(v)(r_n-v)dv}{\int_{0}^{r_n}\rho_{H_m}(v)(r_n-v)dv}\\
&\leq const \left(\frac{2^{1+\delta}(r_n/2)^{\delta}}{\int_{0}^{r_n/2}\rho_{H_m}(v)dv}+\sup_{v\geq r_n^{\delta}}\rho_{H_m}(v)\right)\longrightarrow 0,\quad n\to \infty.
\end{align*}
application of Corollary \ref{ch1:cor5} finishes the proof.
\end{proof}

\begin{proposition}
\label{fgn_cor}
Let $\{G^H(t),t\in\R^d\}$ be a fractional Gaussian noise with index $H=(H_1,\ldots,H_d)\in (0,1)^d.$  Let $W_n=\prod_{i=1}^d [0,r_{n,i}]$ be such that $r_{n,i} \to +\infty, n\to\infty$  for all $1\leq i\leq d$ 
and 
\begin{equation}
\label{rncond}
\prod_{i=1}^d r_{n,i}^{\delta_i +1- 2H_i} \to 0, \quad n\to\infty,
\end{equation}
where $\delta_i=\frac{2H_i-1}{3-2H_i}\mathbbm{1}\{2H_i>1\},$ $1\leq i\leq d.$
Then
\begin{equation}
\label{cor_eq}
\frac{\int_{W_n}\mathbbm{1}\{G^H(t)\geq u\}dt-\prod_{i=1}^d{r_{n,i}}\Psi(u)}{\vf(u)\prod_{i=1}^dr^{H_i}_{i,n}}\xrightarrow[n\to\infty]{d} N(0,1).
\end{equation}
\end{proposition}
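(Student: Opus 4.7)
The plan is to apply Corollary~\ref{ch1:cor5} with $f(x) = x$. Both standing hypotheses hold: $\E (G^H(0))^2 = C(0) = \prod_{i=1}^d \rho_{H_i}(0)/2 = 1$, and $\langle H_1, \mathbbm{1}\{\cdot \geq u\}\rangle_\vf = \vf(u) > 0$ by a direct integration by parts. Consequently, once the long-range condition \eqref{condcor1} is verified the corollary delivers exactly \eqref{cor_eq}, with normalisation $\vf(u)\sqrt{\sigma_n^2}$, where $\sigma_n^2 := \int_{\R^d} C(t)\nu_d(W_n \cap (W_n-t))\nu_d(dt)$; the asymptotic $\sigma_n^2 \sim \prod_i r_{n,i}^{2H_i}$ then identifies this, via Slutsky, with $\vf(u)\prod_i r_{n,i}^{H_i}$.

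I would next exploit the full coordinatewise product structure. Since $C(t) = \prod_i \rho_{H_i}(t_i)/2$ and $\nu_d(W_n \cap (W_n - t)) = \prod_i (r_{n,i} - |t_i|)_+$, both integrals in \eqref{condcor1} factorise into products of univariate integrals, so by symmetry of each $\rho_{H_i}$
$$\Delta_n = \prod_{i=1}^d \frac{\int_0^{r_{n,i}} \rho_{H_i}^2(s)(r_{n,i} - s)\,ds}{2\int_0^{r_{n,i}} \rho_{H_i}(s)(r_{n,i} - s)\,ds}.$$
It suffices to prove that each factor is $O(r_{n,i}^{\delta_i + 1 - 2H_i})$, since then multiplying over $i$ and invoking \eqref{rncond} forces $\Delta_n \to 0$.

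For indices with $H_i > 1/2$ I copy the argument of Proposition~\ref{fgn_prop}: the bound \eqref{ineq11} with free parameter $\delta_i^\ast$, combined with the tail asymptotics $\int_0^r \rho_{H_i}(v)\,dv \sim 2H_i\, r^{2H_i - 1}$ and $\rho_{H_i}(v) \sim 2H_i(2H_i - 1)\, v^{2H_i - 2}$, produces two summands of orders $r_{n,i}^{\delta_i^\ast + 1 - 2H_i}$ and $r_{n,i}^{\delta_i^\ast(2H_i - 2)}$ respectively; balancing these exponents yields $\delta_i^\ast = (2H_i - 1)/(3 - 2H_i) = \delta_i$ and the common order $r_{n,i}^{\delta_i + 1 - 2H_i}$. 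For $H_i \le 1/2$ (so $\delta_i = 0$) the function $\rho_{H_i}^2$ is integrable on $[0,\infty)$, being $O(s^{4H_i - 4})$ as $s \to \infty$; hence the numerator is $O(r_{n,i})$ while the denominator is $\sim r_{n,i}^{2H_i}$, giving ratio $O(r_{n,i}^{1 - 2H_i})$ as needed.

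The main obstacle is the sharp univariate asymptotic $\int_0^r \rho_H(s)(r-s)\,ds \sim r^{2H}$ with leading constant exactly one, needed to identify $\sqrt{\sigma_n^2}$ asymptotically with $\prod_i r_{n,i}^{H_i}$ so that no stray factor appears in the $N(0,1)$ limit. This will require two explicit antidifferentiations of the piecewise expression for $\rho_H$ (separately on $[0,1]$ and $[1,\infty)$), followed by Taylor expansion of $(N+1)^{2H+2} + (N-1)^{2H+2} - 2N^{2H+2}$ as $N \to \infty$; the calculation is routine but must be carried out carefully and uniformly across all regimes of $H \in (0,1)$.
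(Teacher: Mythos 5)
Your proposal follows essentially the same route as the paper: apply Corollary \ref{ch1:cor5} with $f(x)=x$, exploit the coordinatewise product structure of $C$ and of $\nu_d(W_n\cap(W_n-t))$ to factorise both the normalisation and the ratio in \eqref{condcor1} into univariate integrals, treat the regimes $H_i\le 1/2$ (integrability of $\rho_{H_i}^2$) and $H_i>1/2$ (the Proposition \ref{fgn_prop}/\eqref{ineq11} bound with the balanced exponent $\delta_i=(2H_i-1)/(3-2H_i)$) separately, and obtain $\sigma_n^2\sim\prod_i r_{n,i}^{2H_i}$ by the explicit antidifferentiation and second-difference expansion that the paper carries out in \eqref{eq32}--\eqref{eq34}. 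The argument is correct (your second tail exponent $r_{n,i}^{\delta_i(2H_i-2)}$ is in fact the right, negative one), so nothing further is needed.
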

\begin{proof}
Apply Corollary \ref{ch1:cor5} to $X=G^H$ and $f(x)=x.$ We have
\begin{align*}
    &\int_{\R^d}C(t)\nu_d(W_n \cap (W_n-t))\nu_d(dt)=
    \int_{\R^d}C(t)\prod_{i=1}^d \max\left(r_{n,i}-|t_i|,0\right)\nu_d(dt)\\
    &=\prod_{i=1}^d r_{n,i}\int_{-r_{n,i}}^{r_{n,i}} (|t_i+1|^{2H_i}+|t_i-1|^{2H_i}-2 |t_i|^{2H_i})\left(1-\frac{t_i}{r_{n,i}}\right)dt_i\\
    &=\prod_{i=1}^d r_{n,i} \left(\int_{-r_{n,i}}^{r_{n,i}}\rho_{H_i}(t_i)d t_i-\frac{1}{r_{n,i}}\int_{-r_{n,i}}^{r_{n,i}}\rho_{H_i}(t_i)|t_i|d t_i\right),
\end{align*}
where $\rho_\alpha$ is defined in \eqref{rho-def}.

By direct calculation, we get for $r>1$
\begin{align}
\nonumber\int_{-r}^{r}\rho_{\alpha}(v)d v&=2\left(\int_{0}^{r}((v+1)^{2\alpha}-2 v^{2\alpha})dv+\int_{0}^{1}(1-v)^{2\alpha}dv+\int_{1}^{r}(v-1)^{2\alpha}dv\right)\\
\label{eq32}&=\frac{2}{2\alpha+1}\left((r+1)^{2\alpha+1}+(r-1)^{2\alpha+1}-2r^{2\alpha+1}\right),
\end{align}
and
\begin{align}
\nonumber&\int_{-r}^{r}\rho_{\alpha}(v) |v| d v=2\int_{0}^{r}(v(v+1)^{2\alpha}-2 v^{2\alpha+1})dv+2\int_{0}^{1}v(1-v)^{2\alpha}dv\\
\nonumber&+2\int_{1}^{r}v(v-1)^{2\alpha}dv
=\frac{(r+1)^{2\alpha+1}((2\alpha+1)r-1)+(r-1)^{2\alpha+1}((2\alpha+1)r+1)}{(\alpha+1)(2\alpha+1)}\\
\label{eq33}&+\frac{2-2(2\alpha+1)r^{2\alpha+2}}{(\alpha+1)(2\alpha+1)}.
\end{align}
Therefore, combining \eqref{eq32}, \eqref{eq33} and series representation \eqref{rhoser}, we get
\begin{align}
\nonumber\int_{-r}^{r}\rho_{\alpha}(v)\left(1-\frac{|v|}{r}\right) d v&=\frac{1}{r(2\alpha+1)(\alpha+1)}\left((r+1)^{2\alpha+2}+(r-1)^{2\alpha+2}-2r^{2\alpha+2}-2\right)\\
\nonumber&=\frac{2r^{2\alpha+1}}{(2\alpha+1)(\alpha+1)}\left(\frac{(2\alpha+1)(2\alpha +2)}{2 r^2}+\sum_{k=2}^{\infty}\binom{2\alpha+2}{2k}\frac{1}{r^{2k}}\right)\\
\label{eq34}&-\frac{2}{(2\alpha+1)(\alpha+1)r}\sim 2r^{2\alpha-1},\quad r\to\infty.
\end{align}
Thus, we obtain {$
 \int_{W_n}C(t)\nu_d(W_n \cap (W_n-t))\nu_d(dt)
    {\sim} 2^d\prod_{i=1}^d r_{n,i}^{2H_i}$ as $r_{n,i}\to\infty.$}
    
We check condition \eqref{condcor1}. Note that  $\int_\R \rho^2_{H_i}(v)dv<\infty$ and $ \int_\R \rho^2_{H_i}(v)vdv<\infty$ if $2H_i<1,$ therefore   
$$\frac{\int_{-r_{n,i}}^{r_{n,i}} \rho^2_{H_i}(v)(r_{n,i}-v)dv}{\int_{-r_{n,i}}^{r_{n,i}} \rho_{H_i}(v)(r_{n,i}-v)dv}\sim r_{n,i}^{1-2H_i},\quad n\to \infty.$$

If $2 H_i>1,$ then  $\rho_{H_i}(v)>0,v>0$ and $\rho_{H_i}$ is non-increasing {when} $v>1$. We can use the same arguments as in Proposition \ref{fgn_prop} and get 
\begin{align*}
\frac{\int_{-r_{n,i}}^{r_{n,i}} \rho^2_{H_i}(v)(r_{n,i}-|v|)dv}{\int_{-r_{n,i}}^{r_{n,i}} \rho_{H_i}(v)(r_{n,i}-|v|)dv}&\leq \frac{2^{1+\delta_i}(r_{n,i}/2)^{\delta_i}}{\int_{0}^{r_{n,i}/2}\rho_{H_i}(v)dv}+\sup_{v\geq r_{n,i}^\delta}\rho_{H_i}(v)\\
&\sim const \left(r_{n,i}^{\delta_i+1-2H_i} +r_{n,i}^{2\delta_i(1-H_i)}\right),\quad n\to \infty.
\end{align*}
Choosing $\delta_i=\frac{2H_i-1}{3-2H_i}\mathbbm{1}\{2H_i>1\},$ $1\leq i\leq d,$ we get that 
$$\lim_{n\to \infty} \frac{\int_{W_n}\int_{W_n} C^2(t-s)dtds}{\int_{W_n}\int_{W_n} C(t-s)dtds} \leq const \lim_{n\to \infty} \prod_{i=1}^d r_{n,i}^{\delta_i +1- 2H_i}, $$ 
which ends the proof of the Proposition.
\end{proof}

\begin{remark}
\label{rem:46}
Let $r_{n,i}=r_n^{\gamma_i},$ with $\gamma_i=(3-2 H_i)(3-2H_i-\mathbbm{1}\{2H_i>1\})^{-1},$ $1\leq i\leq d,$ and $r_n\to +\infty,n\to\infty.$ Then condition \eqref{rncond} is fulfilled  if $\sum_{i=1}^d H_i >d/2$ and \eqref{cor_eq} rewrites
\begin{equation}
\label{rem_eq}
\frac{\int_{W_n}\mathbbm{1}\{G^H(t)\geq u\}dt-r_n^{\sum_{i=1}^d\gamma_i}\Psi(u)}{\vf(u) r_n^{\sum_{i=1}^d H_i \gamma_i}}\xrightarrow[n\to\infty]{d} N(0,1).
\end{equation}
\end{remark}
Let us consider the case $d=2,$ $r_n=n$ and $H_1<1/2,H_2>1/2,$ such that $H_1+H_2>1.$ Then we get from Remark \ref{rem:46} that $\gamma_1=1,$ $\gamma_2=\frac{3-2H_2}{2-2H_2}.$ 
It is interesting to compare our result \eqref{rem_eq}, which now reads as 
$$\frac{\int_{[0,n]\times [0,n^{\gamma_2}]}\mathbbm{1}\{G^{H_1,H_2}(t_1,t_2)\geq u\}dt_1dt_2-n^{1+\gamma_2}\Psi(u)}{\vf(u) n^{H_1+H_2\gamma_2}}\xrightarrow[n\to\infty]{d} N(0,1),$$ with the results of paper \cite{surg}. Note that 
the spectral density $f$ of $G^H$ is proportional to 
$|x_1|^{1-2H_1} |x_2|^{1-2H_2}$ as $x_1,x_2\to 0$ (see e.g. \cite{Ma1968}). Then application of \cite[Proposition 3.2.]{surg} to partial sums of $G^{H_1,H_2}$ gives 
$$\frac{\kappa(H_1)\kappa(H_2)}{n^{H_1+H_2\gamma_2}}\sum_{1\leq k_1 \leq n,1\leq k_2 \leq n^\gamma_2}G^{H_1,H_2}(k_1,k_2) \xrightarrow[n\to\infty]{d} N(0,1),$$
where $\kappa_1,\kappa_2$ are normalizing constants.


\section*{Funding}
The first author was supported by DFG Grant 390879134.
\bibliographystyle{abbrv}

\bibliography{lit2}

\end{document}